%\documentclass[11pt,leqno]{amsart}
%\usepackage[top=2.5 cm,bottom=2 cm,left=2.5 cm,right=2 cm]{geometry}
%\usepackage[utf8]{inputenc}
%\usepackage{exam}
%\setmathfont{XITS Math}
\documentclass[11pt]{amsart}
%\addtolength{\hoffset}{-1.5cm}\addtolength{\textwidth}{2.5cm}
%\addtolength{\voffset}{-1.5cm}\addtolength{\textheight}{2cm}
\usepackage{amssymb,amsfonts,graphicx,amsmath,amsthm,amstext,latexsym,amsfonts}
\usepackage{graphicx,epsfig}
\usepackage{bbm}
\usepackage{t1enc}
\usepackage{mathrsfs}
%\usepackage{addfont}
%\addfont{rsfs10}{\rsfs}
\usepackage{subfig}
\usepackage{hyperref}
\usepackage{a4wide}
%\pagestyle{plain}
%\usepackage{pstricks-add}
% PACKAGE FIGURE TIKZ
\usepackage{tikz}
\usetikzlibrary{intersections}
\usepackage{egothic}
\usepackage [T1] {fontenc}
\theoremstyle{plain}

\numberwithin{equation}{section}
\newtheorem{theorem}{Theorem}[section]
\newtheorem{proposition}[theorem]{Proposition}%[section]
%[section]
\newtheorem{lemma}[theorem]{Lemma}%[section]
\newtheorem{definition}[theorem]{Definition}%[section]
%[section]
\newtheorem{remark}[theorem]{Remark}%[section]

\newenvironment{proofad2}{\removelastskip\par\medskip
\noindent{\textbf {Proof of Theorem \ref{mainth1comp}}.}
\rm}{\penalty-20\null\hfill$\blacksquare$\par\medbreak} %%

\newenvironment{proofad3}{\removelastskip\par\medskip
\noindent{\textbf {Proof of Theorem \ref{mainth3}}.}
\rm}{\penalty-20\null\hfill$\blacksquare$\par\medbreak} %%

\newenvironment{proofad4}{\removelastskip\par\medskip
\noindent{\textbf {Proof of Theorem \ref{rotheorem}}.}
\rm}{\penalty-20\null\hfill$\blacksquare$\par\medbreak} %%

\usepackage{color}
\definecolor{darkred}{rgb}{0.8,0,0}
\definecolor{darkblue}{rgb}{0,0,0.7}
\definecolor{darkgreen}{rgb}{0,0.4,0}

%%
 % back to normal text
 %

\newcommand{\eps}{\varepsilon}

\newcommand{\R}{{\mathbb R}}

\newcommand{\W}{{\mathcal W}}

\newcommand{\V}{{\mathcal V}}

\newcommand{\un}{{\rm 1\kern -2.5pt l}}
\newcommand{\tr}{{\rm Tr}}

\def\w{\mathbf{w}}
\def\u{\mathbf{u}}

\def\vv{\mathbf{v}}
\def\yy{\mathbf{y}}
\def\n{\mathbf{n}}

\def\vv{\mathbf{v}}

\def\eps{\varepsilon}
\def\R{{\mathbb R}}

\def\H{{\mathcal H}}

\def\eps{\varepsilon}
\def\R{{\mathbb R}}

\def\H{{\mathcal H}}

\def\argmax{\mathop{{\rm argmax}}\nolimits}
\def\argmin{\mathop{{\rm argmin}}\nolimits}

\def\Tr{\mathop{{\rm Tr}}\nolimits}

\def\dv{\mathop{{\rm div}}\nolimits}

\def\u{\mathbf{u}}
\def\v{\mathbf{v}}

\def\v{{\bf v}}
\def\w{{\bf w}}

\def\x{{\bf x}}

\hyphenation{Ma-te-ma-ti-ca}
\hyphenation{Ma-te-ma-ti-ci}
\hyphenation{Piaz-za}

\def\Id{\mathbf{I}}

\def\wconv{\rightharpoonup}
%%%%%%%%%%%%%%%%%%%%%%%%%%%%%%%%%%%%%%%%%%%%%%%%%% ABBREVIAZIONI E MACRO

\newcommand{\MMM}{\color{black}}
\newcommand{\KKK}{\color{black}}
\newcommand{\PPP}{\color{magenta}}
\newcommand{\LLL}{\color{blue}}
%%%%%%%%%%%%%%%%%%%%%%%%%%%%%%%%%%%%%%%%%%%%% NUMERAZIONE DEI TEOREMI

%%%%%%%%%%%%%%%%%%%%%%%%%%%%%%%%%%%%%%%%%%%%%% ABBREVIAZIONI MATEMATICHE

\renewcommand{\epsilon}{\varepsilon}
\newcommand{\beeq}{\begin{equation}}
\newcommand{\eneq}{\end{equation}}
\newcommand{\bear}{\begin{array}}
\newcommand{\enar}{\end{array}}
\newcommand{\bema}{\begin{displaymath}}
\newcommand{\enma}{\end{displaymath}}

\newcommand{\beea}{\begin{eqnarray}}
\newcommand{\enea}{\end{eqnarray}}

\newcommand{\om}{\Omega}

\newcommand{\bb}{\boldsymbol}

%%%%%%%%%%%%%%%%%%%%%%%%%%%%%%%%%%%%%%%%%%%%%%%%%% SPECIFICHE

%%%%%%%%%%%%%%%%%%%%%%%%%%%%%%%%%%%%%%%%%%%%%%%%% ABBREVIAZIONI DI TESTO

%%%%%%%%%%%%%%%%%%%%%%%%%%%%%%%%%%%%%%%%%%%%%%%%%%%%%%% LABELS

\newcommand{\lab}[1]{ \label{#1} }

%%%%%%%%%%%%%%%%%%%%%%%%%%%%%%%%%%%%%%%%%%%%%%%%%%%%%%%%%%%%%

\newenvironment{proofth1}{\removelastskip\par\medskip   % inizio e fine dimostrazione
\noindent{\bf Proof of {\rm {\bf Theorem \ref{mainth1}}}.}
\rm}{\penalty-20\null\hfill$\blacksquare$\par\medbreak} %%

%%%%%%%%%%%%%%%%%%%%%%%%%%%%%%%%%%%%%%%%%%%%%%%%%% SPECIFICHE

\def\Id{\mathbf{I}}

\def\wconv{\rightharpoonup}

%%%%%%%%%%%%%%%%%%%%%%%%%%%%%%%%%%%%%%%%%%%%%%%%%% SPECIFICHE

%\usepackage{refcheck}

\title[Sharp conditions for the linearization of finite elasticity]{Sharp conditions for the linearization of finite  elasticity}
%\title[]{Linear Elasticity as Gamma\,limit of Finite Elasticity\\
%  in the perspective of Neumann problems}
  \author{Edoardo Mainini}
\address[Edoardo Mainini]{Dipartimento di Ingegneria meccanica, energetica, gestionale e dei trasporti, 
  Universit\`a  degli studi di Genova, Via all'Opera Pia, 15 - 16145 Genova Italy.}
\email{mainini@dime.unige.it}
%\urladdr{http://www.dime.unige.it/it/users/edoardo-mainini}

\author{Danilo Percivale}
\address[Danilo Percivale]{Dipartimento di Ingegneria meccanica, energetica, gestionale e dei trasporti, 
  Universit\`a  degli studi di Genova, Via all'Opera Pia, 15 - 16145 Genova Italy.}
\email{percivale@dime.unige.it}
%\urladdr{http://www.dime.unige.it/it/users/edoardo-mainini}

%   \author[]{Edoardo Mainini, Danilo Percivale}
%
%
% \address{Universit\`{a} di Genova, Dipartimento di Ingegneria Meccanica, Energetica, Gestionale e dei Trasporti,
%  Via all'Opera Pia, 15 - 16145 Genova, Italy}
% 
%%
%%%%%%%%%%%%%
%\date{\noindent \ \newline   {\bf AMS Classification Numbers (2010):\,} 49J45, 74K30, 74K35, 74R10.\\
  %{\bf Key Words:\,} Calculus of Variations, %Pure Traction problems, 
  %Linear Elasticity, Finite Elasticity,   %Critical points, 
 % Gamma-convergence,  Rubber-like materials}
%\qquad Research partially supported by \\
%\textsl{P.R.I.N. - }
%
\subjclass[2010]{49J45, 74K30, 74K35, 74R10}
\keywords{Calculus of Variations, %Pure Traction problems, 
  Linear Elasticity, Finite Elasticity,   %Critical points, 
  Gamma-convergence,  incompressibility}
 %%%%%%%%%%%%%%%%%%%%%%%%%%%%%%%%%%%%%%%%%%%%%%%%%%%%%%%%%%%%%%%%%%%%%%%%%
\begin{document}
 \maketitle
\begin{abstract}
We consider the topic of linearization of finite elasticity for pure traction problems. We characterize the variational limit for the approximating sequence of rescaled nonlinear elastic energies. We show that the limiting minimal value  can be strictly lower than the minimal value of the standard linear elastic energy if a strict compatibility condition for external loads does not hold.
The results are provided for both the compressible and the incompressible case.
%We show that incompressible   linearized elasticity can be obtained as variational  limit of incompressible finite elasticity \MMM under equilibrated load conditions in pure traction problems.\\ \KKK
% We consider pure traction problems and we show that incompressible linearized elasticity can be obtained as variational  limit of incompressible finite elasticity   under suitable conditions on external loads. \KKK
%provided that loads are equilibrated and satisfy an additional compatibility condition}
%The result is shown for a large class of energy densities for rubber-like materials. 
\end{abstract}

%\begin{center}
%\large\sffamily DRAFT
%\end{center}
%\tableofcontents
%\begin{flushleft}
%  {\bf AMS Classification Numbers (2010):\,} 49J45, 74K30, 74K35, 74R10.\\
%  {\bf Key Words:\,} Calculus of Variations, %Pure Traction problems, 
%  Linear Elasticity, Nonlinear Elasticity, Finite Elasticity,   %Critical points, 
%  Gamma-convergence, Asymptotic analysis.
%  %nonlinear Neumann problems.\\
%\end{flushleft}

%~~~~~~~~~~~~~~~~~~~
\section{Introduction}

 Let $\Omega\subset\mathbb R^3$ be the reference configuration of a hyperelastic body. If  $\mathbf y:\Omega\to\mathbb R^3$ is the deformation field  and  $h > 0$ is an adimensional  parameter,   we introduce the scaled global energy of the body, including the stored elastic energy and the work of external forces,  by
\begin{equation}\label{un}
\displaystyle \mathcal G_h(\mathbf y):=h^{-2}\int_\om\mathcal W(\x,\nabla\mathbf y)\,d\x
-h^{-1}\mathcal L(\mathbf y-\mathbf i).
\end{equation}
Here, $\mathcal W:\Omega\times\mathbb R^{3\times 3}\to[0,+\infty]$ is the strain energy density and
 $\mathbf i$ denotes the identity map.
 For every $\x\in\om$, the function $\mathcal W(\x,\cdot)$ is assumed to be frame indifferent and uniquely minimized at rotations with value $0$.  %smooth enough around rotations, where it is supposed to be equal to $0$, 
 It is also assumed to \MMM be $C^2$-smooth around rotations and to satisfy \KKK a suitable coercivity condition to be introduced later on.
  Moreover, the load functional $\mathcal L$ is defined by
 $$\mathcal L(\v):=\int_\om\mathbf f\cdot\v\,d\x+\int_{\partial\Omega}\mathbf g\cdot \v\,d\mathcal H^2(\x),$$
 where $\mathbf f: \Omega\to\mathbb R^3$ is a volume force field, $\mathbf g:\partial\om\to\mathbb R^3$ is a surface force field, and $\mathcal H^2$ denotes the surface measure. %and by renaming with $\mathcal G_h^I$ the corresponding global rescaled energies.
 
 \medskip

 In a pure traction problem,
 %\KKK
 %{ since  the structure must resist the action of external forces,  it is necessary to assume that
  in order to study stable equilibrium configurations we have to assume that  \KKK
 \begin{equation*}\lab{precomp}
\mathcal L(\mathbf y-\mathbf i)\le 0
\end{equation*}
 for every deformation $\mathbf y$ such that 
 \begin{equation}\lab{kernell}\int_\om \mathcal W(\x,\nabla\mathbf y)\,d\x=0.\end{equation}
 \MMM Under our assumptions on $\mathcal W$, \eqref{kernell}   holds true if and only if $\nabla \mathbf y$ is a constant rotation matrix, i.e., \KKK $\mathbf y(\x)=\mathbf R \x+\mathbf c$ for some $\mathbf R\in SO(3)$ and some $\mathbf c\in \mathbb R^3$, where $SO(3)$ denotes the special orthogonal group.
 Thus, we need to assume that 
 \begin{equation}\label{rc}
 \mathcal L( (\mathbf R - \mathbf I) \x+\mathbf c)\le 0
\end{equation}
for every $\mathbf R\in SO(3)$ and every $\mathbf c\in \mathbb R^3$,  and by taking $\mathbf R= \mathbf I$ we get  $\mathcal L(\mathbf c)\le 0$ for every $\mathbf c\in \mathbb R^3$, that is, $\mathcal L(\mathbf c)= 0$ for every $\mathbf c\in \mathbb R^3$. \MMM Hence, \eqref{rc} is equivalent to the following two conditions
\begin{equation}\label{c}
 \mathcal L(\mathbf c)= 0\ \qquad \forall\  \mathbf c\in \mathbb R^3,
\end{equation}
\begin{equation}\label{wcomp} \mathcal L( (\mathbf R - \mathbf I) \x)\le 0\ \qquad \forall\ \mathbf R\in SO(3).
\end{equation}
We observe that 
if  $\mathbf R\in SO(3)$ exists such that  
$\mathcal L((\mathbf R-\mathbf I)\x)> 0,$
then  $\mathcal G_h$ is not uniformly bounded from below with respect to $h$, that is, $\inf \mathcal G_h\to -\infty$  as $h\to 0$ (see also Remark 2.9 below).
It is worth noting that \eqref{c} says that  external loads have null resultant while it will be shown in Remark \ref{rem21}  that \eqref{wcomp} implies they have null momentum (without being equivalent to the null momentum condition). 

\medskip

The choice of the scaling powers in \eqref{un} depends on the behavior of the elastic strain energy density and of the work expended by external loads for deformations which are close to a suitable rotation of the reference configuration, say $\mathbf y=\mathbf R(\mathbf i+h\u)$, where $\u:\om\to\mathbb R^3$ and where $\mathbf R$ belongs to the following {\it rotation kernel} associated to $\mathcal L$ (that satisfies \eqref{c}-\eqref{wcomp})
\begin{equation}\label{S0}
\mathcal S^0_{\mathcal L}:=\{\mathbf R\in SO(3): \mathcal L((\mathbf R-\mathbf I)\x)=0\}.
\end{equation}
Indeed, by frame indifference   we obtain
\begin{equation}\label{G_hh}
\mathcal G_h(\mathbf y)=h^{-2}\int_\om\mathcal W(\x,\mathbf I+h\nabla\u(\x))\,d\x-\mathcal L(\mathbf R\u)-h^{-1}\mathcal L((\mathbf R-\mathbf I)\x)
,
\end{equation}
and if $\mathbf R\in \mathcal S^0_{\mathcal L}$, by a Taylor expansion of $\mathcal W(\x,\cdot)$ around the identity matrix we  formally \KKK get for every fixed $\u$
\begin{equation}\lab{heuristic1}
\displaystyle \lim_{h\to 0} \mathcal G_h(\mathbf y)= \displaystyle\int_\Omega\mathcal Q(\x,\mathbb E(\u))\,d\x -\mathcal L(\mathbf R\u),
\end{equation}
where $\mathbb E(\u):=\tfrac12(\nabla\u^T+\nabla\u)$ and  where we have introduced the quadratic form \KKK
 \[\mathcal Q(\x,\mathbf F):=%\lim_{h\to 0}h^{-2}\mathcal W(x, \exp(h\mathbf F))=
 \frac12\,\mathbf F^T\,D^2 {\mathcal W}(\x,\mathbf I)\,\mathbf F,\qquad\mathbf F\in\mathbb R^{3\times3},\quad\x\in\om.\]
Therefore, it is  natural to guess that the variational limit $\mathcal G$ of $\mathcal G_h$ as $h\to0$ can be obtained from \eqref{heuristic1} through a minimization among all $\mathbf R\in \mathcal S^0_{\mathcal L}$, namely
\begin{equation}\label{heuristicbis}
\displaystyle {\mathcal G}(\u):=\int_\om\mathcal Q(\x,\mathbb E(\u))\,d\x
%\displaystyle \int_\om \mathcal Q(\x, \mathbb E(\u))\,d\x-\mathcal L(\u)
-\max_{\mathbf R\in \mathcal S^0_{\mathcal L}} \mathcal L(\mathbf R\u).
\end{equation}
We stress that, if $\mathbf R=\mathbf I$,  this corresponds to the usual formal  derivation of linearized elasticity. But for any other $\mathbf R\in \mathcal S^0_{\mathcal L}$ the work done by external loads for going from $\Omega$ to $\mathbf R\Omega$ is null, so that it might be energetically convenient to consider  deformations near  $\mathbf R\x$ rather than near the identity. On the other hand it is clear that if $\mathbf R\notin \mathcal S^0_{\mathcal L}$ this heuristic argument fails.  Indeed, choosing $\mathbf R \in SO(3)\setminus \mathcal S^0_{\mathcal L}$ \KKK
%\[
%\mathcal G_h(\mathbf y)=h^{-2}\int_\om\mathcal W(\x,\mathbf I+h\nabla\u(\x))\,d\x-\mathcal L(\mathbf R\u)-h^{-1}\mathcal L((\mathbf R-\mathbf I)\x)
%\]
  is not energetically convenient  due to the behavior of the last term in the right hand side of \eqref{G_hh} as $h\to 0$\KKK.
%The freedom in the choice of $\mathbf R$ shall play a crucial role in the energy optimization.

\medskip

In the case that \eqref{c}-\eqref{wcomp} hold and  $\mathcal S^0_{\mathcal L}\equiv\{ \mathbf I\}$, 
%which means that the strict compatibility condition 
%\begin{equation*}\label{wcomp} \mathcal L( (\mathbf R - \mathbf I) \x)< 0\ \qquad \forall\ \mathbf R\in SO(3)\setminus\{\mathbf I\}
%\end{equation*}
%is satisfied,
then \eqref{heuristicbis} reduces to
\begin{equation*}
\mathcal E(\u):= \displaystyle\int_\Omega\mathcal Q(\x,\mathbb E(\u))\,d\x -\mathcal L(\u)
\end{equation*}
which is  the standard form of the \KKK total potential energy of the elastic body   in the linear setting. \KKK
\KKK
It has been shown in \cite{MPTARMA} (see also \cite{MPTJOTA}) that  in this case
\begin{equation}\label{heuristic} \lim_{h\to 0}(\inf\mathcal G_h)= \min\mathcal E \end{equation}
 and if $\mathcal G_h(\mathbf y_h)-\inf \mathcal G_h\to 0$ as $h\to 0$ (i.e., if $(\mathbf y_h)$ is a sequence of quasi-minimizers of $\mathcal G_h$) then \begin{equation}\label{heuristic2}\u_h:=h^{-1}(\mathbf y_h-\mathbf i)\to \u_0\in \argmin\mathcal E\end{equation} in a suitable sense.  In particular,  \KKK $\u_0$ satisfies the equilibrium conditions
\begin{equation}\lab{equilibrium}\left\{\begin{array}{ll}
-\dv \mathcal Q'(\x,\mathbb E(\u_0))=\mathbf f\qquad &\hbox{ in}\ \om\\
&\\
\mathcal Q'(\x,\mathbb E(\u_0))\, \n=\mathbf g\qquad &\hbox{ on}\ \partial\om,
\end{array}\right.
\end{equation} 
where $\mathcal Q'(\x,\mathbf F):=D^2\mathcal W(\x,\mathbf I)\,\mathbf F$ and $\mathbf n$ is the outer unit normal to $\partial\om$. \KKK
%\LLL We notice that $\mathcal E$ is the usual functional of linearized elasticity, whose expression comes from a heuristic  linearization argument around the identity, by assuming that small loads produce a small deformation. \KKK
%The case of incompressible elasticity can \MMM be considered \KKK in this setting by introducing 
In \cite{MP2} we have extended the results of  \cite{MPTARMA} to  incompressible elasticity.  Indeed, it is shown in \cite{MP2} that \eqref{heuristic} and \eqref{heuristic2} hold true \KKK 
%provided that  $\partial\Omega$ has a finite number of connected components and
 by substituting $\mathcal G_h$ with
 the scaled incompressible global energy  $\mathcal G_h^I$, defined by replacing $\mathcal W$ with $\mathcal W^I$ in the right hand side of \eqref{un}, 
where
%  $\mathcal W$ with 
 \begin{equation*}
{\mathcal W}^I (\x, \mathbf F):=\left\{\begin{array}{ll} \mathcal W(\x,\mathbf F) \qquad &\hbox{if}\ \ \det\mathbf F=1\\
 +\infty\ &\hbox{otherwise},
\end{array}\right.
\end{equation*}
and by substituting  $\mathcal E$ with
 \begin{equation*}
\mathcal E^I(\v):=\int_\om\mathcal Q
^I(\x,\mathbb E(\v))-\mathcal L(\v),\end{equation*}
where
\begin{equation}\label{QI}
{\mathcal Q}^I (\x, \mathbf F):=\left\{\begin{array}{ll} \vspace{0.2cm}\dfrac12\,\mathbf F^T\,D^2\mathcal W(\x,\mathbf I)\,\mathbf F\ \qquad &\hbox{if}\ \ \tr\,\mathbf F=0\\
 +\infty\ &\hbox{otherwise.}
\end{array}\right.
\end{equation}
 Roughly speaking, these results can be interpreted by saying that, if \eqref{c}  holds along with the strict compatibility condition
\[ \mathcal L( (\mathbf R - \mathbf I) \x)< 0\ \qquad \forall\ \mathbf R\in SO(3)\setminus\{\mathbf I\},
\]
  then linear elasticity can be viewed as the variational limit of finite elasticity both in the compressible and in the incompressible case. 

\medskip

By assuming only \eqref{c} and  \eqref{wcomp}, since $\mathcal S_{\mathcal L}^0$ needs not be reduced to the identity matrix, a minimizer  of functional \eqref{heuristicbis} is not expected to satisfy \eqref{equilibrium} in general. On the other hand,  we may ask if its energy level equals the minimal value of $\mathcal E$: this fact
%(and the corresponding property for incompressible elasticity)
 is still an  open question and it represents our main focus, along with the analogous comparison between optimal energy levels in the incompressible case.
\KKK

% Under the sole conditions \eqref{c} and  \eqref{wcomp}, a minimizer $\u$ of functional \eqref{heuristicbis} cannot satisfy \eqref{equilibrium} unless the maximum \PPP over $\mathcal S^0_{\mathcal L}$ therein is attained at the identity matrix.  If \PPP it is attained at another $\mathbf R\in\mathcal S^0_{\mathcal L}$ \KKK, we cannot exclude that the energy level of such minimizer is the same of $\u_0$: this fact
%(and the corresponding property for incompressible elasticity) is still an  open question, which represents our main focus.

\medskip

  A consequence of a recent result  shown by Maor and Mora in    \cite[Theorem 5.3]{MM} is that if \eqref{c} and \eqref{wcomp} 
%(in \cite{MM}, the condition \eqref{wcomp} is imposed by replacing generic external forces with suitably rotated ones) 
hold along with a quadratic growth condition from below for $\mathcal W(\x,\cdot)$, then  indeed \KKK
%is supplied by choosing a different reference configuration in a suitable set of admissible rotations, then, in the compressible case
$$\lim_{h\to 0} (\inf \mathcal G_h) = \min \mathcal G.$$
In this paper we extend this result to the incompressible case and to more general coercivity assumptions  on the strain energy density $\mathcal W$,
but more than anything else we  exhibit examples in which %not only a minimizer of functional \eqref{heuristicbis} doesn't satisfy \eqref{equilibrium} but also
\begin{equation}\label{new}\min \mathcal G < \min \mathcal E\qquad\mbox{and}\qquad \min \mathcal G^I < \min \mathcal E^I,\end{equation}
%in the case that  $\mathcal S^{0}_{\mathcal L}$
%is not reduced to the identity matrix. 
 %:=\left\{ \mathbf R\in SO(3): \mathcal L( (\mathbf R- \mathbf I)\x)=0\right\}
%\end{equation}
%is not reduced to the identity matrix.
where $\mathcal G^I$ is defined by replacing $\mathcal Q$ with $\mathcal Q^I$ in \eqref{heuristicbis}.
{\MMM  
Surprisingly enough, this shows that,  at least under the sole assumptions \eqref{c}-\eqref{wcomp}, the energy level of the minimizer of $\mathcal E$  does not necessarily provide the  minimal value \KKK of the variational limit of the scaled  finite elasticity functional $\mathcal G_h$.
 A
 %(resp. $\mathcal G_h^I$) as $h\to 0$,  that is a
  gap between \KKK $\lim_{h\to 0}(\inf\mathcal G_h)$ (resp. $\lim_{h\to 0}(\inf\mathcal G_h^I)$) and $\min\mathcal E$ (resp. $\min\mathcal E^I$) may appear. 

\medskip

In detail, by \KKK assuming the coercivity condition
\begin{equation}\label{pgr}
\begin{array}{ll}
\W(\x,\mathbf F)\ge  C \ g_{p}(d(\mathbf F, SO(3)))\qquad
 \forall\, \mathbf F\in \mathbb R^{3 \times 3}%\quad \hbox{for a.e.}\ x\in \om,
\end{array}
\end{equation}
for some $p\in(1,2]$, where
\begin{equation}\label{gp}
g_{p}(t):=\left\{\begin{array}{ll} \!\! t^{2}\quad &\hbox{if}\ 0\le t\le 1\vspace{0.3cm}\\
%&\\
\!\! \displaystyle \frac{2t^{p}}{p}-\frac{2}{p}+1\quad &\hbox{if}\ t\ge 1\\
\end{array}\right.
\end{equation}
and $d(\cdot,SO(3))$ denotes the distance function from rotations, for the incompressible case we will prove the following result  (see Theorem \ref{mainth1} below).
If $(\mathbf y_h)\subset W^{1,p}(\om,\mathbb R^3)$ is a sequence of quasi-minimizers of $\mathcal G_{h}^I$, %(where $\mathcal G_h$ is defined by \eqref{un} in $H^1(\om,\mathbb R^3)$ and extended to $+\infty$ over $W^{1,p}(\om,\mathbb R^3)\setminus H^1(\om,\mathbb R^3)$), %such that
%\begin{equation} \label{assinf2}  \lim_{j\to +\infty}\left( \mathcal G^{I}_{h_{j}}(\mathbf y_{j})-\inf_{W^{1,p}(\om,\mathbb R^3)}\mathcal G^{I}_{h_{j}}\right)= 0,
%\end{equation}
%for some $\v_{j}\in H^1(\Omega;\R^3)$ such that $\v_{j}=0$ on $\Gamma$,
then by defining the {\it generalized rescaled displacements}
 \begin{equation*}
 \u_h(\x):= h^{-1}(\mathbf R_h^T\mathbf y(\x)-\x), \;\;\quad\mbox{where}\quad\;\;
\mathbf R_h\in \argmin\left\{ \int_\om g_p(|\nabla\mathbf y_h-\mathbf R|)\,d\x: \mathbf R\in SO(3)\right\},
\end{equation*}\KKK
there is a (not relabeled) subsequence such that
\[ \nabla\u_{h}\wconv \nabla\u_* \ \hbox{weakly in}\ L^{p}(\Omega,\R^{3\times3})\qquad\mbox{as $h\to 0$},
\]
where $\u_*\in H^1(\om,\R^3)$ and $\u_*$ is a minimizer of $\mathcal G^I$ over $W^{1,p}(\om,\mathbb R^3)$. Moreover,
\begin{equation*}   \mathcal G_h^I(\mathbf y_{h})\to \mathcal G^I(\u_*)\qquad\mbox{and}\qquad
\inf_{W^{1,p}(\om,\mathbb R^3)}\mathcal G_{h}^I\to \min_{W^{1,p}(\om,\mathbb R^3)}\mathcal G^I\quad\qquad\mbox{as $h\to 0$}.
\end{equation*}
Here,  the precise characterization of $\mathcal G^I$ is
\begin{equation*}
\displaystyle {\mathcal G}^I(\u)=\left\{\begin{array}{ll}\displaystyle \int_\om \mathcal Q^I(\x, \mathbb E(\u))\,d\x-\max_{\mathbf R\in \mathcal S^0_{\mathcal L}} \mathcal L(\mathbf R\u)\quad &\hbox{if} \ \u\in H^1_{\dv}(\om,\mathbb R^3)\\
&\\
 \ \!\!+\infty\quad &\hbox{otherwise in} \ W^{1,p}(\om,\mathbb R^3),
\end{array}\right.
\end{equation*}
where $H^1_{\dv}(\om,\mathbb R^3)$ denotes the space of divergence-free $H^1(\om,\R^3)$ vector fields.
 \MMM Such a result improves the one in \cite{MP2}, as it allows to obtain the characterization of the limit energy even 
  without the assumption $\mathcal S^0_{\mathcal L}\equiv\{\mathbf I\}$. \KKK
 %under the weak compatibility condition \eqref{wcomp}.
 % We also notice that under the strict compatibility condition \eqref{str} we have $\mathcal S_{\mathcal L}^0\equiv\{\mathbf I\}$ and therefore functional $\mathcal G$ is reduced to the standard functional $\mathcal E$ of linearized elasticity. \KKK
 It
 also generalizes a recent result of Jesenko and Schmidt \cite{JS} and  reduces to it  when  $\mathcal S^0_{\mathcal L}\equiv SO(3)$.
 We will provide 
 the same statement 
  for the compressible case  in Theorem \ref{mainth1comp},  thus obtaining an analogous of  \cite[Theorem 5.3]{MM} for the case of the $p$-growth assumption \eqref{pgr}  (see also Remark \ref{maormora} below). 
  \KKK On top of that, we will show in Theorem \ref{mainth3} that there are configurations and external loads such that the strict inequalities  \eqref{new} hold, the minimization problems being cast on $W^{1,p}(\om,\mathbb R^3)$. 
  We will end our analysis by remarking that \eqref{heuristic} might be true even if  \eqref{c}-\eqref{wcomp} hold and  $\mathcal S^0_{\mathcal L}$ is not reduced to the identity matrix: indeed, it is always possible to rotate the external forces in such a way that \eqref{heuristic} holds for the problem with rotated forces, see Theorem \ref{rotheorem}.\KKK  
  %Of course, this requires that the strict compatibility condition \ref{} does not hold
 \MMM
 
 \medskip 
 
 \KKK
 Let us finally mention that several other results about variational linearization of finite elasticity, including Dirichlet problems, incompressibility constraints or even theories for multiwell potentials
 are found in \cite{ABK, ADMDS, ADMLP, DMPN, MP,  S}.

 \subsection*{Plan of the paper} 
 In Section \ref{sectmain} we introduce the assumptions of the theory and state the main results. Section \ref{prel} collects some preliminary results. In Section \ref{proofsection} we provide the proof of the variational convergence results. Eventually, Section \ref{counterexamples} delivers the main example with a limiting energy that is below the minimal value of the standard linearized elasticity functional.

%\newpage
%%%%%%%%%%%%%%%%%%%%%%%%%%%%%%%%%%%%%%%%%%%%%
%~~~~~~~~~~~~~~~~~~~~
\section{Main Results}\label{sectmain}
%{Statement of the problem and main results}
%%%%%%%%%%%%%%%

We introduce the setting for compressible and incompressible elasticity, then we state the main results.
In the following, the reference configuration $\Omega$ is always assumed to be a bounded open connected Lipschitz set in $\mathbb R^3$.

As basic notation,  $\mathbb R^{3\times 3}$ is the set of $3\times 3$ real matrices, endowed with the Euclidean norm $|\mathbf F|=\sqrt{\mathbf F^T\mathbf F}$.
% and if $\mathbf F \in \mathbb R^{3\times 3}$ then $| \mathbf F|^2 =\hbox{Tr}(\mathbf F^{T}\mathbf F)$ will denote its squared Euclidean norm. 
$\mathbb R^{3\times 3}_{\rm sym}$  (resp. $\mathbb R^{3\times 3}_{\rm skew})$ denotes  the subset of symmetric (resp. skew-symmetric) matrices. For every $\mathbf F\in \mathbb R^{3\times 3}$ we define ${\rm sym\,}\mathbf F:=\frac{1}{2}(\mathbf F+\mathbf F^T)$
and  ${\rm skew\,}\mathbf F:=\frac{1}{2}(\mathbf F-\mathbf F^T)$. 
By $SO(3)$ we  denote the special orthogonal group and for every $\mathbf R\in SO(3)$ there exist $\vartheta\in \mathbb R$ and $\mathbf W\in \mathbb R^{3\times 3}_{\mathrm{skew}},$ such that  $|\mathbf W|^{2}=|\mathbf W^2|^2=2$ and such that 
%$\exp(\vartheta\,\mathbf W)= \mathbf R$.
%By taking into account that $\mathbf W^{3}=-\mathbf W$, the Taylor's series expansion of $\,\vartheta\to \exp(\vartheta\,\mathbf W)=\sum_{k=0}^\infty \vartheta^k\mathbf W^k/k!\,$ yields the {\it Euler-Rodrigues formula}:
%By taking into account that $\mathbf W^{3}=-\mathbf W$, the exponential matrix series $ \exp(\vartheta\,\mathbf W)=\sum_{k=0}^\infty \vartheta^k\mathbf W^k/k!$ yields 
the following  Euler-Rodrigues representation formula holds
\begin{equation}\label{eurod}
%\exp(\vartheta\,\mathbf W)\,=\,
\mathbf R\,=\,\mathbf I\,+\,\sin\vartheta \,\mathbf W\,+\,(1-\cos\vartheta)\,\mathbf W^{2}.
\end{equation}
%A Sobolev vector field $\w\in W^{1,1}(\om,\mathbb R^3)$ is said to be an infinitesimal rigid displacement  if $\mathbb E(\v):={\rm sym\,}\nabla\v=0$ a.e. in $\om$, which is the case iff there exist $\aa,\,\mathbf b\in\R^3$ such that $\v(x)=\aa \wedge x+\mathbf b$ for
%every $x\in \om$, where $\wedge$ denotes the cross product. 

%Let $\om\subset \R^3$ be the reference configuration of the body. We assume that for some $m\in\mathbb N$
%\begin{equation}\label{OMEGA} \begin{array}{ll}&\om \hbox{ is a bounded open connected Lipschitz set, } \partial\Omega \hbox{ has $m$ connected components}.
%\end{array}
%\end{equation}

\subsection*{Assumptions on the elastic energy density} 

%It is convenient to introduce an auxiliary strain energy density, defined on every $3\times3$ matrix. \KKK 
We let
  $\mathcal W : \om \times \mathbb R^{3 \times 3} \to [0, +\infty ]$ be  ${\mathcal L}^3\! \times\! {\mathcal B}^{9} $- measurable %For a.e. $x\in\om$, let  $\mathcal W(x,\mathbf F)=+\infty$ if $\det\mathbf F\le 0$. 
 satisfying the following assumptions, see also \cite{ADMDS,MP}: 
%\MMM forse dire anche che  ?  \KKK
%
%We assume that for a.e. $x \in \om$
%
\beeq \lab{framind}\tag{$\bb{\mathcal W1}$} \W(\x, \mathbf R\mathbf F)=\W(\x, \mathbf F) \qquad \forall \, \mathbf\! \mathbf R\!\in\! SO(3) \quad \forall\, \mathbf F\in \mathbb R^{3 \times 3},\qquad \mbox{for a.e. $\x\in\Omega$},%\ \quad  \hbox{ for a.e.}\ x\in \om\,,
\eneq
\beeq \lab{Z1}\tag{$\bb{\mathcal W2}$}
\min \mathcal W=\KKK	\W^I(\x,\mathbf I)=0 \quad \mbox{for a.e. $\x\in\Omega$}.%\ \qquad \hbox{for a.e. }x \in \om  \,,
\eneq
Concerning the regularity of $\mathcal W$, we assume that there exist an open neighborhood $\mathcal U$ of $SO(3)$ in $\R^{3\times3}$,  an increasing function $\omega:\mathbb R_+\to\mathbb R$ satisfying $\lim_{t\to0^+}\omega(t)=0$ and a constant $K>0$
such that for a.e. $\x\in\om$
%Moreover, we define $\mathcal W:\Omega\times\mathbb R^{3\times3}\to[0,+\infty]$ by \MMM\[\mathcal W(x,\mathbf F):=\left\{\begin{array}{ll}\mathcal W^I(x,(\det\mathbf F)^{-1/3}\mathbf F)\qquad&\mbox{if $\det\mathbf F>0$}\\+\infty\qquad&\mbox{if $\det\mathbf F\le 0$}\end{array}\right.\]\KKK and we assume  that $\W(x,\cdot)$ is $C^2$ in a neighbor of rotations, i.e., 
%(with gradient and Hessian denoted by $D$ and $D^2$), i.e.,
\beeq\begin{array}{ll}\lab{reg}\tag{$\bb{\mathcal W3}$} &   
%\hbox{there is an open neighborhood} \ \mathcal U \  \hbox{of}\ SO(3) \hbox{ s.t., for a.e. $\x\in\om$,  }	
\vspace{0,1cm}
\mathcal W(\x,\cdot)\in C^{2}(\mathcal U),\;\;\;
%\;\;\hbox{ for a.e. $\x\in\om$},\\
%&
%\hbox{there exists } K>0\ \hbox{such that}
 \vspace{0,1cm}
  |D^2 \mathcal W(\x,\mathbf I)|\le K \;\;\hbox{and}\\
  %\,\,\,\hbox{for a.e. $ \x\in\om$,}\\
& %\hbox{for  a.e. $\x\in\om$ there holds }
 |D^2\W(\x,\mathbf F)-D^2\W(\x,\mathbf G)|\le\omega(|\mathbf F-\mathbf G|)\quad\forall\; \mathbf F,\mathbf G\in\mathcal U.
%& \hbox{with a modulus of continuity of $D^2\mathcal W(x,\cdot)$ that does not depend on $x$}.
\end{array}
\eneq
We assume in addition the following growth property from below: there exist $C>0$ and $p\in(1,2]$ such that for a.e. $\x\in\Omega$
\beeq \lab{coerc}\tag{$\bb{\mathcal W4}$}
\begin{array}{ll}
\W(\x,\mathbf F)\ge  C \ g_{p}(d(\mathbf F, SO(3)))\qquad
 \forall\, \mathbf F\in \mathbb R^{3 \times 3},%\quad \hbox{for a.e.}\ x\in \om,
\end{array}
\eneq
where  $g_p:[0,+\infty)\to\mathbb R$ is the strictly convex function defined by \eqref{gp}.
%\beeq\lab{gp}
%g_{p}(t)=\left\{\begin{array}{ll} \!\! t^{2}\quad &\hbox{if}\ 0\le t\le 1\vspace{0.3cm}\\
%%&\\
%\!\! \displaystyle \frac{2t^{p}}{p}-\frac{2}{p}+1\quad &\hbox{if}\ t\ge 1.\\
%\end{array}\right.
%\eneq
We notice that a standard application of the H\"older inequality shows that for every $\eta \in L^p(\om)$ and every $h\in (0,1)$
 \begin{equation}\lab{propgp}\begin{aligned}
\displaystyle h^{-2}\int_\om g_p(h|\eta|)\,dt&\ge \int_{|\eta|\le h^{-1}}|\eta|^2\,dt+h^{p-2}\int_{|\eta|\ge h^{-1}}|\eta|^p\,dt\\
&\displaystyle\ge \frac{2}{p} \int_{|\eta|\le h^{-1}}|\eta|^p\,dt+h^{p-2}\int_{|\eta|\ge h^{-1}}|\eta|^p\,dt-\frac{2-p}{p}\,|\om|\\
&\displaystyle\ge \int_{\om}|\eta|^p\,dt-\frac{2-p}{p}\,|\om|
\end{aligned}
\end{equation}

In order to consider incompressible elasticity models, starting from a function $\mathcal W$ as above we also introduce the incompressible strain energy density by letting, for a.e. $\x\in\Omega$,
\begin{equation*}
{\mathcal W}^I (\x, \mathbf F):=\left\{\begin{array}{ll} \mathcal W(\x,\mathbf F) \qquad &\hbox{if}\ \ \det\mathbf F=1\\
 +\infty\ &\hbox{otherwise.}
\end{array}\right.
\end{equation*}

\KKK

%Here and through the paper, for a matrix $\mathbf B\in\mathbb R^{3\times3}$, we denote $|\mathbf B|:=\sqrt{\mathrm{Tr}(\mathbf B^T\mathbf B})$ 

%~~~~~~~~~~~~~~~~~~~~
%

%\subsection*{Incompressible strain energy density}
%We assume that the  material is  incompressible. This is done by introducing the incompressible elastic energy density $\W^I$ as
%\[%\lab{incenergy}
%\mathcal W^{I}(x, \mathbf F):=\left\{\begin{array}{ll} \mathcal W(x,\mathbf F)\quad &\hbox{if} \det \mathbf F=1\\
%&\\
% +\infty\quad &\hbox{otherwise}\end{array}\right.
%\]

%We notice that for model energy density $\mathcal W$ of the form \eqref{isovol},
%Moreover, the choice of $\mathcal W_{vol}$ does not affect the resulting $\mathcal W^I$:  it is therefore not restrictive to assume that $\mathcal W_{vol}$ takes a specific form when assuming \eqref{coerc}.

\subsection*{Assumptions on the external forces}
We introduce  a  body force field $\mathbf f\in L^{\frac{3p}{4p-3}}(\Omega,\R^3)$ and a surface force field $\mathbf g\in L^{\frac{2p}{3p-3}}(\partial\Omega,\R^3)$, where $p$ is such that \eqref{coerc} holds. From here on,  $\mathbf f$ and $\mathbf g$ will  always be understood to satisfy such summability assumptions. The load functional is the following linear functional \begin{equation}\label{external}
\mathcal L(\v):=\int_\om\mathbf f\cdot\v\,d\x+\int_{\partial\om}\mathbf g\cdot\v\,d\mathcal H^2(\x),\qquad \v\in W^{1,p}(\om,\mathbb R^3).
\end{equation}
%where $\mathbf f\in L^{\frac{3p}{4p-3}}(\om,\mathbb R^3)$ is given.
 %By the Sobolev embedding $W^{1,p}(\om,\mathbb R^3)\hookrightarrow L^{\frac{3p}{3-p}}(\om,\mathbb R^3)$ and the properties of the trace operator, $\mathcal L$ is a continuous functional on $W^{1,p}(\om,\mathbb R^3)$ and $|\mathcal L(\v)|\le C_{\mathcal L}\|\v\|_{W^{1,p}(\om,\mathbb R^3)}$ holds for a suitable constant $C_{\mathcal L}$ that depends on $\om,\ \mathbf f$ and $\mathbf g$.\\
We note that since $\Omega$ is a bounded Lipschitz domain, the Sobolev embedding $W^{1,p}(\Omega,\mathbb R^3)\hookrightarrow L^{\frac{3p}{3-p}}(\Omega,\mathbb R^3)$ and the Sobolev trace embedding $W^{1,p}(\Omega,\mathbb R^3)\hookrightarrow L^{\frac{2p}{3-p}}(\partial\Omega,\mathbb R^3)$ imply that $\mathcal L$ is a bounded functional over $W^{1,p}(\Omega,\mathbb R^3)$.

 We assume that external  loads have null resultant %and null momentum, i.e.,
 \begin{equation}\lab{globalequi}\tag{$\bb{\mathcal L1}$}
%\v\in W^{1,p}(\Omega,\mathbb R^3)\mbox{ and } 
\mathcal L(\mathbf c)=0\qquad \forall\, \mathbf c\in \mathbb R^3
%\ \forall\, \v\in W^{1,p} \ \hbox{s.t.}\ \mathbb E(\v)=0,
\end{equation}
 and that they satisfy the following  weak compatibility condition\KKK %{\it generalized momentum inequality}
\begin{equation}\lab{comp}\tag{$\bb{\mathcal L2}$}
\mathcal L((\mathbf R-\mathbf I)\x)\le 0\qquad \forall\,\mathbf R\in SO(3).\end{equation}
A crucial object in our results is the rotation kernel $\mathcal S_{\mathcal L}^0$ associated to a functional $\mathcal L$ satisfying the above assumptions, which is the set defined by \eqref{S0}.
%\begin{equation}\displaystyle
% \mathcal S^{0}_{\mathcal L}:=\left\{ \mathbf R\in SO(3): \mathcal L( (\mathbf R- \mathbf I)\x)=0\right\}.
%\end{equation}
Such a kernel includes at least the identity matrix and represents the set of rotations that realize equality in \eqref{comp}.

\begin{remark}\label{rem21}\rm Thanks to the the Euler-Rodrigues representation formula for rotations \eqref{eurod}, it is readily seen that \eqref{comp} may be rewritten as
\begin{equation*}\lab{comp2}
h_{\mathbf W}(\theta):=\mathcal L(\mathbf W\x)\sin\theta+(1-\cos\theta)\mathcal L(\mathbf W^2\x)\le 0
\end{equation*}
for every $\theta\in [0,2\pi]$ and for every $\mathbf W\in \mathbb R^{3\times 3}_{\mathrm{skew}}$ with $|\mathbf W|=2$.
Since $h_{\mathbf W}(0)=h_{\mathbf W}(2\pi)=0$, then 
$$0\le h_{\mathbf W}'(2\pi)=\mathcal L(\mathbf W\x)=h_{\mathbf W}'(0)\le 0,$$
that is, by linearity of $\mathcal L$,
\begin{equation*}
\mathcal L(\mathbf W\x)=0\ \quad \forall\ \mathbf W\in \mathbb R^{3\times 3}_{\mathrm{skew}},
\end{equation*}
and so  if \eqref{comp} holds then  external loads have null momentum. Therefore, \eqref{comp} is equivalent to
\begin{equation}\label{comp3}
\mathcal L(\mathbf W\x)=0,\ \ \mathcal L(\mathbf W^2 \x)\le 0\qquad \forall\ \mathbf W\in \mathbb R^{3\times 3}_{\mathrm{skew}},
\end{equation}
and we mention that formulation \eqref{comp3} of the compatibility condition  (with strict inequality) is the one appearing in \cite{MPTJOTA, MPTARMA, MP2}.
 On the other hand it is worth noting that the null momentum condition does not imply the second relation in \eqref{comp3}. 
Indeed, let  $\mathbf f(\x)= - \x,\ \mathbf g\equiv 0$ and let $\om$ be the open unit ball in $\mathbb R^3$. Then $$\mathcal L(\mathbf W^2\x)=\int_\om |\mathbf W\x|^2\,d\x> 0$$
for every  $\mathbf W\in \mathbb R^{3\times 3}_{\mathrm{skew}}, \ \mathbf W\not\equiv 0$, despite that external loads has null resultant and null momentum. 
%{\color{blue}Finally we highlight that by taking for every  $\mathbf f\in L^{\frac{3p}{4p-3}}(\Omega,\R^3)$ and $\mathbf g\in L^{\frac{2p}{3p-3}}(\partial\Omega,\R^3),$ $\overline{\mathbf R}\in \argmax_{SO(3)} \mathcal L(\mathbf R x)$ we get 
%$$\mathcal L(\overline {\mathbf R}( \mathbf R-\mathbf I)x)\le 0$$
%that is \eqref{comp} can be hidden away by changing  functionals $\mathcal G_h$ into
%$$
%\displaystyle \overline{\mathcal G}_h(\mathbf y):=h^{-2}\int_\om\mathcal W(\x,\nabla\mathbf y)\,d\x
%-h^{-1}\mathcal L(\mathbf y-\overline{\mathbf R} x)
%$$
%that is by choosing $\overline{\mathbf R} \om$ as reference configuration in place of $\om,$ which is the (equivalent) approach used in \cite{MM}.}
\end{remark}

\begin{remark}\label{subgroup} \rm The characterization \eqref{comp3} of \eqref{comp} and Euler-Rodrigues formula  entail
\begin{equation*}
 \mathcal S^{0}_{\mathcal L}=\left\{ \mathbf R\in SO(3): \mathcal L( (\mathbf R- \mathbf I)\x)=0\right\}=\left\{ e^{\mathbf W} : \mathbf W\in {\mathcal X}^0_{\mathcal L}\right\}
\end{equation*}
where
$${\mathcal X}^0_{\mathcal L}:=\left\{\mathbf W\in \mathbb R^{3\times 3}_{\mathrm{skew}}:\; \mathcal L(\mathbf W\x)= \mathcal L(\mathbf W^2 \x)=0\right\}.$$
Therefore,
we have $\mathbf R\in\mathcal S_{\mathcal L}^0\Rightarrow \mathbf R^T\in\mathcal S_{\mathcal L}^0$, because $$\mathcal L((\mathbf R^T-\mathbf I)\x)=\mathcal L((\mathbf R^T-\mathbf R)\x)+\mathcal L((\mathbf R-\mathbf I)\x)=0$$ holds true since $\mathbf R^T-\mathbf R$ is skewsymmetric.
Moreover, if $\mathbf W_i\in {\mathcal X}^0_{\mathcal L},\ i=1,2$, then by \eqref{comp}
\begin{equation*}\begin{aligned}0&\ge \mathcal L((\mathbf W_1\pm\mathbf W_2)^{2}\x)=\mathcal L(\mathbf W^2_1 \x)+\mathcal L(\mathbf W^2_2 \x)\pm \mathcal L((\mathbf W_1\mathbf W_2+\mathbf W_2\mathbf W_1)\x)\\&=\pm \mathcal L((\mathbf W_1\mathbf W_2+\mathbf W_2\mathbf W_1)\x),\end{aligned}\end{equation*}
that is, $\mathbf W_1\pm\mathbf W_2\in {\mathcal X}^0_{\mathcal L}$, hence $e^{\mathbf W_1\pm \mathbf W_2}\in  \mathcal S^{0}_{\mathcal L}$. By recalling that $\mathbf I\in \mathcal S^{0}_{\mathcal L}$, we conclude that $\mathcal S^{0}_{\mathcal L}$ is a subgroup of $SO(3)$. We refer to \cite{MM} for a more detailed characterization of the set $\mathcal S_{\mathcal L}^0$.
\end{remark}
\KKK

%where $\R^{3\times 3}_{\rm skew}$ denotes the set of real $3\times3$ skew-symmetric matrices.

%Some examples of external loads satisfying the above assumptions are provided in the remarks at the end of this section.
%As an example we may take $\mathbf f=\nabla\phi$, where $\phi\in W^{1,r}_0(\Omega)$, $r= \tfrac{3p}{4p-3}$, and $\mathbf g=\lambda \mathbf n$, where $\mathbf n$ is the unit exterior normal vector to $\partial\Omega$, under the restrictions $\int_\om\phi(x)\,dx\le 0$ and $\lambda\ge 0$ (with at least one strict inequality). 
%It is readily seen that in this case \eqref{globalequi} and \eqref{comp} are satisfied.
%We refer to Remark \ref{phi} for more properties of external loads of this form.

\subsection*{Energy functionals}

  The rescaled finite elasticity functionals  $\mathcal G_h: W^{1,p}(\Omega,\R^3)\to \R\cup\{+\infty\} $ are defined by \eqref{un}
%\begin{equation*}
%\label{nonlinear}
%\displaystyle \mathcal G_h(\mathbf y):=h^{-2}\int_\om\mathcal W(\x,\nabla\mathbf y)\,d\x
%-h^{-1}\mathcal L(\mathbf y-\mathbf i)
%\end{equation*}
%We also define $\mathcal F_h: W^{1,p}(\Omega,\R^3)\to \R\cup\{+\infty\} $ as
%\[
%\mathcal F_{h}(\v):=\frac1{h^2}\int_\Omega\mathcal W(x,\mathbf I+h\nabla\v)\,dx-\mathcal L(\v).
%\]  
%where
%\beeq
% \mathcal V_{h}^{I}(x, \mathbf B):=h^{-2}\mathcal W^{I}(\x, \mathbf I+h\mathbf B).
 %\eneq
%\beeq
%\lab{potential} \mathcal L(\v):\ =\,\int_{\Omega} \mathbf f\cdot\mathbf v\ dx.
%\eneq
%A sequence  $\v_{h}\in  H^1(\Omega;\R^3)$ is said a {\it minimizing sequence} for the sequence %of energy functionals $\F_{h}$ if there exists a scalar sequence $\eps_{h}\to 0^{+}$ as $h\to %0^{+}$ such that
%\beeq
%\mathcal F_{h}(\v_{h})\,\le \,\inf \mathcal F_{h}+\eps_{h}\,, \qquad \forall\, h>0 \,.
%\eneq
%
%\subsection*{Linearized functional}
%In this paper we are interested in the  asymptotic behavior as $h\downarrow 0_+$
%of such minimizing sequences
%of  sequences of quasi-minimizers of functionals $\mathcal F_h^I$.
and the limit energy functional $\mathcal G:W^{1,p}(\Omega,\mathbb R^3)\to\mathbb R\cup\{+\infty\}$ is defined as
\begin{equation*}\lab{elfunc}
\displaystyle {\mathcal G}(\u):=\left\{\begin{array}{ll}\displaystyle \int_\om \mathcal Q(\x, \mathbb E(\u))\,d\x-\mathcal L(\u)-\max_{\mathbf R\in \mathcal S^0_{\mathcal L}} \mathcal L((\mathbf R-\mathbf I)\u)\quad &\hbox{if} \ \u\in H^1(\om,\mathbb R^3)\\
&\\
 \ \!\!+\infty\quad &\hbox{otherwise in} \ W^{1,p}(\om,\mathbb R^3),
\end{array}\right.
\end{equation*}
where    ${\mathcal Q} (\x, \mathbf F):=\tfrac12\,\mathbf F^T\,D^2\mathcal W(\x,\mathbf I)\,\mathbf F$.
%\begin{equation*}\lab{V0}
%{\mathcal Q}^I (x, \mathbf B):=\left\{\begin{array}{ll} &\dfrac12\,\mathbf B^T\,D^2\mathcal W(x,\mathbf I)\,\mathbf B\ \qquad \hbox{if}\ \ \tr\,\mathbf B=0\\
%&\\
%& +\infty\ \hbox{otherwise,}
%\end{array}\right.
%\end{equation*}
By introducing the standard functional of linearized elasticity $\mathcal E:W^{1,p}(\Omega,\mathbb R^3)\to\mathbb R\cup\{+\infty\}$, namely
\begin{equation*}
\displaystyle {\mathcal E}(\u):=\left\{\begin{array}{ll}\displaystyle \int_\om \mathcal Q(\x, \mathbb E(\u))\,d\x-\mathcal L(\u)\quad &\hbox{if} \ \u\in H^1(\om,\mathbb R^3)\\
&\\
 \ \!\!+\infty\quad &\hbox{otherwise in} \ W^{1,p}(\om,\mathbb R^3),
\end{array}\right.
\end{equation*}
we immediately see that $\mathcal G\le \mathcal E$, since $\mathbf I\in\mathcal S^0_{\mathcal L}$ and $\mathcal L(\mathbf 0)=0$.
\MMM It is well-known that $\mathcal E$ admits a unique minimizer up to infinitesimal rigid displacements (i.e., up to the addition of a displacements field $\v$ such that $\mathbb E(\v)=0$). Since the optimization problem in the definition of $\mathcal G$ is among rotations in $\mathcal S_{\mathcal L}^0$, it is not difficult to check that $\mathcal G$ is invariant under the addition of infinitesimal rigid displacements, i.e., $\mathcal G(\u+\v)=\mathcal G(\u)$ whenever $\mathbb E(\v)\equiv0$. On the other hand, in general minimizers of $\mathcal G$ are not unique up to infinitesimal rigid displacements (see Proposition \ref{pro52} later on). 

 \KKK 

%%%%%%%%%%%%%%%%%%INCOMPRESSIBLE

\bigskip

When considering incompressible elasticity,
 the functional  $\mathcal G_h^{I}: W^{1,p}(\Omega,\R^3)\to \R\cup\{+\infty\} $, representing the scaled total energy, is defined by
\begin{equation*}
\label{nonlinear}
\displaystyle \mathcal G_h^{I}(\mathbf y):=h^{-2}\int_\om\mathcal W^I(\x,\nabla\mathbf y)\,d\x
-h^{-1}\mathcal L(\mathbf y-\mathbf i),
\end{equation*}
%We also define $\mathcal F_h: W^{1,p}(\Omega,\R^3)\to \R\cup\{+\infty\} $ as
%\[
%\mathcal F_{h}(\v):=\frac1{h^2}\int_\Omega\mathcal W(x,\mathbf I+h\nabla\v)\,dx-\mathcal L(\v).
%\]  
%where
%\beeq
% \mathcal V_{h}^{I}(x, \mathbf B):=h^{-2}\mathcal W^{I}(\x, \mathbf I+h\mathbf B).
 %\eneq
%\beeq
%\lab{potential} \mathcal L(\v):\ =\,\int_{\Omega} \mathbf f\cdot\mathbf v\ dx.
%\eneq
%A sequence  $\v_{h}\in  H^1(\Omega;\R^3)$ is said a {\it minimizing sequence} for the sequence %of energy functionals $\F_{h}$ if there exists a scalar sequence $\eps_{h}\to 0^{+}$ as $h\to %0^{+}$ such that
%\beeq
%\mathcal F_{h}(\v_{h})\,\le \,\inf \mathcal F_{h}+\eps_{h}\,, \qquad \forall\, h>0 \,.
%\eneq
%
%\subsection*{Linearized functional}
%In this paper we are interested in the  asymptotic behavior as $h\downarrow 0_+$
%of such minimizing sequences
%of  sequences of quasi-minimizers of functionals $\mathcal F_h^I$.
while the limit  functional $\mathcal G^I:W^{1,p}(\Omega,\mathbb R^3)\to\mathbb R\cup\{+\infty\}$ is defined by
\begin{equation*}\lab{elfunc2}
\displaystyle {\mathcal G}^I(\u):=\left\{\begin{array}{ll}\displaystyle \int_\om \mathcal Q^I(\x, \mathbb E(\u))\,d\x-\mathcal L(\u)-\max_{\mathbf R\in \mathcal S^0_{\mathcal L}} \mathcal L((\mathbf R-\mathbf I)\u)\quad &\hbox{if} \ \u\in H^1(\om,\mathbb R^3)\\
&\\
 \ \!\!+\infty\quad &\hbox{otherwise in} \ W^{1,p}(\om,\mathbb R^3),
\end{array}\right.
\end{equation*}
where $\mathcal Q^I$ is defined by \eqref{QI}. %$\mathbb E(\u):=\frac{1}{2}(\nabla\u^T+\nabla\u)$ is the infinitesimal strain tensor,
%\begin{equation*}\lab{V0}
%{\mathcal Q}^I (\x, \mathbf B):=\left\{\begin{array}{ll} \dfrac12\,\mathbf B^T\,D^2\mathcal W(\x,\mathbf I)\,\mathbf B\ \qquad &\hbox{if}\ \ \tr\,\mathbf B=0\vspace{0.1cm}\\
% +\infty\ &\hbox{otherwise.}
%\end{array}\right.
%\end{equation*}
%and where we have introduced the following kernel 
%\begin{equation}\displaystyle
% \mathcal S^{0}_{\mathcal L}:=\left\{ \mathbf R\in SO(3): \mathcal L( (\mathbf R- \mathbf I)x)=0\right\}.
%\end{equation}
We also introduce the functional of incompressible linearized elasticity
$\mathcal E^I:W^{1,p}(\Omega,\mathbb R^3)\to\mathbb R\cup\{+\infty\}$, namely
\begin{equation*}
\displaystyle {\mathcal E}^I(\u):=\left\{\begin{array}{ll}\displaystyle \int_\om \mathcal Q^I(\x, \mathbb E(\u))\,d\x-\mathcal L(\u)\quad &\hbox{if} \ \u\in H^1(\om,\mathbb R^3)\\
&\\
 \ \!\!+\infty\quad &\hbox{otherwise in} \ W^{1,p}(\om,\mathbb R^3),
\end{array}\right.
\end{equation*}
and again $\mathcal G^I\le \mathcal E^I$. \MMM Functional $\mathcal E^I$ admits a unique minimizer up to infinitesimal rigid displacements.\KKK

\bigskip

Before moving to the statement of the main results, we introduce a couple of definitions.
For every $\mathbf y\in W^{1,p}(\om,\mathbb R^3)$ we define 
\begin{equation*}\lab{roty}
\mathcal A_p(\mathbf y):= \argmin\left\{ \int_\om g_p(|\nabla\mathbf y-\mathbf R|)\,d\x: \mathbf R\in SO(3)\right\}.
\end{equation*}
%We notice that the  minimization problem in \eqref{roty} has indeed a unique solution, due to the strict convexity of $g_p$. 
Moreover,
 we may combine this definition with
the rigidity inequality by Friesecke, James and M\"uller \cite{FJM0}, in the general form appearing in  \cite{FJM,ADMDS}, to get the following estimate.
%\begin{lemma}({\bf Geometric Rigidity Inequality} \cite{FJM}, \cite{ADMDS}). 
There exists a constant $C_p=C_{p}(\om) >0$ such that for every $\yy\in W^{1,p}(\om,\mathbb R^{3})$ and every $\mathbf R\in \mathcal A_p(\mathbf y)$
%there exists a constant $\mathbf R\in SO(3)$ such that we have
\begin{equation}\label{muller}
\int_{\om}g_{p}(|\nabla\yy-\mathbf R|)\,d\x\le C_{p}\int_{\om}g_{p}(d(\nabla \yy, SO(3)))\,d\x,
\end{equation}\KKK
where $d(\mathbf F,SO(3)):=\inf\{|\mathbf F-\mathbf R|:\mathbf R\in SO(3)\}$.
%\end{lemma}

Moreover, we introduce the following
\begin{definition}
 Given a vanishing sequence $(h_j)_{j\in\mathbb N}\subset(0,1)$, we say that $(\mathbf y_j)_{j\in\mathbb N}\subset W^{1,p}(\Omega,\mathbb R^3)$ is a sequence of quasi-minimizers of $\mathcal G_{h_j}$ if
 \[
 \lim_{j\to+\infty}\left(\mathcal G_{h_j}(\mathbf y_j)-\inf_{W^{1,p}(\Omega,\mathbb R^3)}\mathcal G_{h_j}\right)=0.
 \]
 Sequences of quasi-minimizers of $\mathcal G^I_{h_j}$ are defined in the same way.
 \end{definition}

\subsection*{Convergence results}
We  are ready for the statement of the convergence result in the compressible case.
\begin{theorem}
\label{mainth1comp}
Assume \eqref{globalequi}, \eqref{comp},  \eqref{framind}, \eqref{Z1}, \eqref{reg},    
\eqref{coerc}. 
Let $(h_j)_{j\in\mathbb N}\subset(0,1)$  be a vanishing sequence.
%Then, for every $j\in\mathbb N$, $\mathcal G_{h_j}$ is bounded from below over $W^{1,p}(\om,\mathbb R^3)$. 
Then we have
\[
\inf_{W^{1,p}(\om,\mathbb R^3)}\mathcal G_{h_{j}} \in \mathbb R \qquad\mbox{for any $j\in\mathbb N$}. \]
Moreover, if $(\mathbf y_j)_{j\in\mathbb N}\subset W^{1,p}(\om,\mathbb R^3)$ is a sequence of quasi-minimizers of $\mathcal G_{h_j}$, %such that
%\begin{equation} \label{assinf2}  \lim_{j\to +\infty}\left( \mathcal G^{I}_{h_{j}}(\mathbf y_{j})-\inf_{W^{1,p}(\om,\mathbb R^3)}\mathcal G^{I}_{h_{j}}\right)= 0,
%\end{equation}
%for some $\v_{j}\in H^1(\Omega;\R^3)$ such that $\v_{j}=0$ on $\Gamma$,
 and if $\mathbf R_j\in\mathcal A_p(\mathbf y_j)$ for any $j\in\mathbb N$, \KKK then by defining
 \begin{equation*}
%\mathbf R_{j}:=\mathbf R(\mathbf y_j),\qquad 
\u_j(\x):= h_j^{-1}(\mathbf R_{j}^T\mathbf y(\x)-\x)
\end{equation*}\KKK
there is a (not relabeled) subsequence such that
\[ \nabla\u_{j}\wconv \nabla\u_* \ \hbox{weakly in}\ L^{p}(\Omega,\R^{3\times3})\qquad\mbox{as $j\to+\infty$},
\]
where $\u_*\in H^1(\om,\R^3)$ is a minimizer of $\mathcal G$ over $W^{1,p}(\om,\mathbb R^3)$,
and
\begin{equation*}   \mathcal G_{h_{j}}(\mathbf y_{j})\to \mathcal G(\u_*),\quad\qquad
\inf_{W^{1,p}(\om,\R^3)}\mathcal G_{h_{j}}\to \min_{W^{1,p}(\om,\R^3)}\mathcal G\quad\qquad\mbox{as $j\to+\infty$}.
\end{equation*}
\end{theorem}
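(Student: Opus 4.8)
The plan is to follow the classical two-step scheme for $\Gamma$-convergence-type results: a compactness-plus-lower-bound argument and a matching upper-bound (recovery sequence) argument, with the novelty lying in handling the non-trivial rotation kernel $\mathcal S^0_{\mathcal L}$. First I would establish the lower bound for $\inf\mathcal G_{h_j}$, which in particular shows $\inf\mathcal G_{h_j}\in\mathbb R$. Given a quasi-minimizing sequence $(\mathbf y_j)$ and $\mathbf R_j\in\mathcal A_p(\mathbf y_j)$, set $\u_j:=h_j^{-1}(\mathbf R_j^T\mathbf y_j-\mathbf i)$. Using frame indifference \eqref{framind} I rewrite $\mathcal G_{h_j}(\mathbf y_j)=h_j^{-2}\int_\om\mathcal W(\x,\mathbf I+h_j\nabla\u_j)\dx-\mathcal L(\mathbf R_j\u_j)-h_j^{-1}\mathcal L((\mathbf R_j-\mathbf I)\x)$. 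The coercivity \eqref{coerc}, the rigidity estimate \eqref{muller}, and the inequality \eqref{propgp} give $\int_\om|\nabla\u_j|^p\dx\le C(1+\mathcal G_{h_j}(\mathbf y_j)+h_j^{-1}|\mathcal L((\mathbf R_j-\mathbf I)\x)|+\|\mathcal L\|\,\|\u_j\|)$. The term $-h_j^{-1}\mathcal L((\mathbf R_j-\mathbf I)\x)$ is nonnegative by \eqref{comp}, so it must be controlled from above: one shows (as in the preliminaries) that up to a subsequence $\mathbf R_j\to\bar{\mathbf R}$ and, by comparing with a competitor and a careful expansion, that $h_j^{-1}\mathcal L((\mathbf R_j-\mathbf I)\x)$ stays bounded and in fact $\bar{\mathbf R}\in\mathcal S^0_{\mathcal L}$; writing $\mathbf R_j=\bar{\mathbf R}e^{h_j\mathbf A_j}$-type decompositions lets one absorb this term. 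Combined with a Poincaré–Korn inequality (after subtracting an infinitesimal rigid motion) this yields $\|\nabla\u_j\|_{L^p}\le C$, hence $\nabla\u_j\wconv\nabla\u_*$ weakly in $L^p$ along a subsequence, and the quadratic-growth part forces $\mathbb E(\u_*)\in L^2$, so by Korn $\u_*\in H^1_{\rm loc}$ and in fact $\u_*\in H^1(\om,\R^3)$.

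Next I would prove the liminf inequality $\liminf_j\mathcal G_{h_j}(\mathbf y_j)\ge\mathcal G(\u_*)$. The Taylor expansion \eqref{reg} of $\mathcal W(\x,\cdot)$ around $\mathbf I$ on the set $\{|\nabla\u_j|\le h_j^{-1}\}$ gives $h_j^{-2}\mathcal W(\x,\mathbf I+h_j\nabla\u_j)\ge\mathcal Q(\x,\nabla\u_j)-\omega(h_j|\nabla\u_j|)|\nabla\u_j|^2$ there, while on the complementary bad set the coercivity \eqref{coerc} together with \eqref{propgp} keeps the contribution nonnegative up to a vanishing error; weak $L^2$ lower semicontinuity of the convex quadratic $\mathbf F\mapsto\int_\om\mathcal Q(\x,\sym\mathbf F)\dx$ applied to the truncated gradients (which converge weakly in $L^2$ to $\nabla\u_*$, since the bad set has vanishing measure) yields $\liminf_j h_j^{-2}\int_\om\mathcal W(\x,\mathbf I+h_j\nabla\u_j)\dx\ge\int_\om\mathcal Q(\x,\mathbb E(\u_*))\dx$. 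For the load terms, $\mathcal L(\mathbf R_j\u_j)\to\mathcal L(\bar{\mathbf R}\u_*)$ by strong $L^{3p/(3-p)}$ convergence (from compact Sobolev embedding) of $\u_j$ up to rigid motions, and $-h_j^{-1}\mathcal L((\mathbf R_j-\mathbf I)\x)\ge0$; writing $\mathbf R_j\u_j=\bar{\mathbf R}\u_j+(\mathbf R_j-\bar{\mathbf R})\u_j$ and noting $(\mathbf R_j-\bar{\mathbf R})\to0$, one gets $\liminf\big(-\mathcal L(\mathbf R_j\u_j)-h_j^{-1}\mathcal L((\mathbf R_j-\mathbf I)\x)\big)\ge-\mathcal L(\bar{\mathbf R}\u_*)\ge-\max_{\mathbf R\in\mathcal S^0_{\mathcal L}}\mathcal L(\mathbf R\u_*)$. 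Combining, $\liminf_j\mathcal G_{h_j}(\mathbf y_j)\ge\mathcal G(\u_*)$, and since $\mathcal G(\u_*)=\int_\om\mathcal Q(\x,\mathbb E(\u_*))\dx-\mathcal L(\u_*)-\max_{\mathbf R\in\mathcal S^0_{\mathcal L}}\mathcal L((\mathbf R-\mathbf I)\u_*)$ using \eqref{globalequi} and $\mathcal L(\mathbf R\x)=\mathcal L(\mathbf x)$ modifications this matches the stated functional.

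For the upper bound I would fix an arbitrary $\mathbf v\in H^1(\om,\R^3)$ and $\mathbf R\in\mathcal S^0_{\mathcal L}$ and build the recovery sequence $\mathbf y_j:=\mathbf R(\mathbf i+h_j\mathbf v)$ (or a truncation thereof if $\mathbf v\notin W^{1,\infty}$, using a standard truncation of $\nabla\mathbf v$ on sets where it is large so that the sets have vanishing contribution in the $h_j^{-2}$-scaled energy — this is where \eqref{coerc} and dominated convergence are used). Then by frame indifference $\mathcal G_{h_j}(\mathbf y_j)=h_j^{-2}\int_\om\mathcal W(\x,\mathbf I+h_j\nabla\mathbf v)\dx-\mathcal L(\mathbf R\mathbf v)-h_j^{-1}\mathcal L((\mathbf R-\mathbf I)\x)$, and the last term vanishes because $\mathbf R\in\mathcal S^0_{\mathcal L}$; the Taylor expansion \eqref{reg} with the bound $|D^2\mathcal W(\x,\mathbf I)|\le K$ and the modulus $\omega$ give $h_j^{-2}\int_\om\mathcal W(\x,\mathbf I+h_j\nabla\mathbf v)\dx\to\int_\om\mathcal Q(\x,\mathbb E(\mathbf v))\dx$, so $\mathcal G_{h_j}(\mathbf y_j)\to\int_\om\mathcal Q(\x,\mathbb E(\mathbf v))\dx-\mathcal L(\mathbf R\mathbf v)$. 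Optimizing over $\mathbf R\in\mathcal S^0_{\mathcal L}$ and over $\mathbf v$ (and recalling $\mathcal L(\mathbf R\mathbf v)=\mathcal L(\mathbf v)+\mathcal L((\mathbf R-\mathbf I)\mathbf v)$) yields $\limsup_j\inf\mathcal G_{h_j}\le\inf_{W^{1,p}}\mathcal G$. Together with the lower bound, $\inf\mathcal G_{h_j}\to\min\mathcal G$, the infimum is attained at $\u_*$, and $\mathcal G_{h_j}(\mathbf y_j)\to\mathcal G(\u_*)$ for the quasi-minimizers. The main obstacle — and the genuinely new point compared with \cite{MPTARMA,MP2} — is controlling the possibly-unbounded-looking term $h_j^{-1}\mathcal L((\mathbf R_j-\mathbf I)\x)$ when $\mathcal S^0_{\mathcal L}$ is nontrivial: proving that the optimal $\mathbf R_j$ converge to some $\bar{\mathbf R}\in\mathcal S^0_{\mathcal L}$ at the right rate, so that this nonnegative term neither blows up (which would contradict quasi-minimality via the recovery sequence) nor is wasted; this requires a quantitative argument combining the second-order structure of $\mathcal L((\mathbf R-\mathbf I)\x)$ near $\mathcal S^0_{\mathcal L}$ (Remark \ref{rem21}, \ref{subgroup}) with the energy bound, and is the technical heart of the proof.
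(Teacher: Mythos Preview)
Your overall strategy---compactness, liminf inequality, recovery sequence---is exactly that of the paper, and your treatment of the liminf and the upper bound is essentially the same (mollify rather than truncate for the recovery sequence, but this is cosmetic in the compressible case).

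However, you have misidentified where the difficulty lies and, as a result, overcomplicated the compactness step. You write the a priori bound with $h_j^{-1}|\mathcal L((\mathbf R_j-\mathbf I)\x)|$ on the right and then say this ``must be controlled from above'' via a quantitative expansion $\mathbf R_j=\bar{\mathbf R}e^{h_j\mathbf A_j}$ and a rate argument near $\mathcal S^0_{\mathcal L}$. This is unnecessary (and in fact circular, since you do not yet know the rate). The paper's argument exploits the sign: by \eqref{comp} one has $h_j^{-1}\mathcal L((\mathbf R_j-\mathbf I)\x)\le 0$, so in the identity
\[
h_j^{-2}\!\int_\om\mathcal W(\x,\nabla\mathbf y_j)\,d\x \;=\; \mathcal G_{h_j}(\mathbf y_j)+\mathcal L(\mathbf R_j\u_j)+h_j^{-1}\mathcal L((\mathbf R_j-\mathbf I)\x)
\]
the last term can simply be dropped, giving $h_j^{-2}\int_\om\mathcal W\le M+\mathcal L(\mathbf R_j\u_j)$. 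Coercivity, rigidity, \eqref{propgp} and the Young-type estimate \eqref{elle} (which bounds $|\mathcal L(\mathbf R_j\u_j)|$ by $\varepsilon^p\|\nabla\u_j\|_{L^p}^p$ plus a constant) then yield $\|\nabla\u_j\|_{L^p}\le Q$ directly. Only \emph{after} this bound is established does one return and observe
\[
0\le -h_j^{-1}\mathcal L((\mathbf R_j-\mathbf I)\x)\le M+\mathcal L(\mathbf R_j\u_j)\le C,
\]
whence $\mathcal L((\mathbf R_j-\mathbf I)\x)\to 0$ and any limit $\bar{\mathbf R}$ lies in $\mathcal S^0_{\mathcal L}$. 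No rate of convergence of $\mathbf R_j$ to $\mathcal S^0_{\mathcal L}$ is ever needed; in the liminf you again just drop the nonnegative term $-h_j^{-1}\mathcal L((\mathbf R_j-\mathbf I)\x)$. So what you flag as the ``technical heart'' is in fact a two-line sign argument, not a quantitative second-order analysis.
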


The same statement holds in the incompressible case
 
\begin{theorem}
\label{mainth1}
Assume that $\partial\Omega$ has a finite number of connected components.
Assume \eqref{globalequi}, \eqref{comp},  \eqref{framind}, \eqref{Z1}, \eqref{reg},    
\eqref{coerc}. 
Let $(h_j)_{j\in\mathbb N}\subset(0,1)$  be a vanishing sequence.
Then we have
\beeq\lab{convmin}
\inf_{W^{1,p}(\om,\mathbb R^3)}\mathcal G^{I}_{h_{j}} \in \mathbb R\qquad \mbox{for any $j\in\mathbb N$}. \eneq
Moreover, if $(\mathbf y_j)_{j\in\mathbb N}\subset W^{1,p}(\om,\mathbb R^3)$ is a sequence  of quasi-minimizers of $\mathcal G^I_{h_j}$, and if $\mathbf R_j\in\mathcal A_p(\mathbf y_j)$ for any $j\in\mathbb N$,
%\begin{equation} \label{assinf2}  \lim_{j\to +\infty}\left( \mathcal G^{I}_{h_{j}}(\mathbf y_{j})-\inf_{W^{1,p}(\om,\mathbb R^3)}\mathcal G^{I}_{h_{j}}\right)= 0,
%\end{equation}
%for some $\v_{j}\in H^1(\Omega;\R^3)$ such that $\v_{j}=0$ on $\Gamma$,
then  by defining 
 \begin{equation*}
%\mathbf R_{j}:=\mathbf R(\mathbf y_j),\qquad 
\u_j(\x):= h_j^{-1}(\mathbf R_{j}^T\mathbf y(\x)-\x)
\end{equation*}
there is a (not relabeled) subsequence such that
\[ \nabla\u_{j}\wconv \nabla\u_* \ \hbox{weakly in}\ L^{p}(\Omega,\R^{3\times3})\qquad\mbox{as $j\to+\infty$},
\]
where $\u_*\in H^1(\om,\R^3)$ is a minimizer of $\mathcal G^I$ over $W^{1,p}(\om,\mathbb R^3)$,
and
\begin{equation*}   \mathcal G^{I}_{h_{j}}(\mathbf y_{j})\to \mathcal G^{I}(\u_*),\qquad\quad
\inf_{W^{1,p}(\om,\R^3)}\mathcal G^{I}_{h_{j}}\to \min_{W^{1,p}(\om,\R^3)}\mathcal G^{I}\qquad\quad\mbox{as $j\to+\infty$}.
\end{equation*}
\end{theorem}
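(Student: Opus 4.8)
The plan is to run the standard compactness-and-$\Gamma$-convergence machinery, but with two extra difficulties relative to the compressible situation: the incompressibility constraint $\det\nabla\mathbf y_j=1$ (which in the limit becomes the linear constraint $\dv\u_*=0$, i.e.\ $\tr\mathbb E(\u_*)=0$), and the fact that the recovery sequence must be made divergence-free without destroying the energy estimate. First I would establish the lower bound $\inf \mathcal G^I_{h_j}\in\mathbb R$: for a competitor $\mathbf y$ with finite energy one has $\nabla\mathbf y\in SO(3)$-valued up to the $g_p$-controlled defect, so writing $\mathbf R_j\in\mathcal A_p(\mathbf y)$ and using the rigidity estimate \eqref{muller} together with \eqref{coerc}, \eqref{propgp} one controls $\int_\om g_p(|\nabla\mathbf y-\mathbf R_j|)$ by $C\,h_j^2\,h_j^{-2}\int_\om\mathcal W^I(\x,\nabla\mathbf y)$; then the linear term $\mathcal L(\mathbf y-\mathbf i)$ is estimated via the Sobolev/trace embeddings after splitting $\mathbf y-\mathbf i=(\mathbf y-\mathbf R_j\mathbf x)+(\mathbf R_j-\mathbf I)\mathbf x$ and using \eqref{globalequi}–\eqref{comp} to kill/sign the rigid part — this is exactly the argument behind \cite[Theorem 5.3]{MM} adapted to $p$-growth, and it yields a bound uniform in $h_j$, hence finiteness and equi-coercivity of the rescaled displacements $\u_j$ in $W^{1,p}$ modulo infinitesimal rigid motions.

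Next, compactness: from the uniform bound $\int_\om g_p(|\nabla\u_j|)\le C$ and \eqref{propgp} one gets $\nabla\u_j$ bounded in $L^p$, and after subtracting an infinitesimal rigid displacement (allowed since $\mathcal G^I$ is invariant under these) one extracts $\nabla\u_j\wconv\nabla\u_*$ weakly in $L^p$. A key point is to upgrade $\u_*$ to $H^1$: one splits $\mathbb E(\u_j)$ according to whether $|\nabla\u_j|$ is above or below $h_j^{-1}$; on the "good" set the quadratic part of $g_p$ gives an $L^2$ bound, while the "bad" set has measure $o(1)$ and contributes negligibly, so $\mathbb E(\u_*)\in L^2$, and by Korn $\u_*\in H^1_{loc}$, then $H^1$ using the Lipschitz domain. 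For the divergence-free constraint I would use that $\det(\mathbf R_j^T\nabla\mathbf y_j)=1$ rewrites as $\det(\mathbf I+h_j\nabla\u_j)=1$; expanding, $h_j\,\dv\u_j + h_j^2(\cdots)+h_j^3\det\nabla\u_j=0$, and since the higher-order terms are controlled in $L^{p/2}$ and $L^{p/3}$ by the $g_p$-bound, dividing by $h_j$ and passing to the limit gives $\dv\u_*=0$ in the sense of distributions (the finite number of connected components of $\partial\Omega$ enters here, as in \cite{MP2}, to handle the flux/trace terms correctly).

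For the liminf inequality I would, for each fixed $\mathbf R\in\mathcal S^0_{\mathcal L}$, perform the frame-indifference rewriting \eqref{G_hh}–\eqref{heuristic1} with $\mathbf R_j$ in place of $\mathbf R$, Taylor-expand $\mathcal W(\x,\mathbf I+h_j\nabla\u_j)$ on the good set using \eqref{reg} to get $\geq\int_\om\mathcal Q(\x,\mathbb E(\u_j))-o(1)$, use weak $L^2$ lower semicontinuity of the (nonnegative) quadratic form $\mathcal Q$ on the symmetrized gradients, handle the load term $\mathcal L(\mathbf R_j\u_j)$ by noting $\mathbf R_j\to\bar{\mathbf R}$ (subsequence) and, crucially, that $\bar{\mathbf R}\in\mathcal S^0_{\mathcal L}$ because otherwise the $h_j^{-1}\mathcal L((\mathbf R_j-\mathbf I)\mathbf x)$ term would blow down to $-\infty$ contradicting the uniform bound — this forces the limiting rigid rotation into the kernel and produces the $\max_{\mathbf R\in\mathcal S^0_{\mathcal L}}\mathcal L((\mathbf R-\mathbf I)\u_*)$ term. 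Matching this, the recovery/upper-bound construction: given a test $\u\in H^1_{\dv}$ and the optimal $\mathbf R\in\mathcal S^0_{\mathcal L}$, I would first smooth and truncate $\u$ so the truncated field $\u^\delta$ has $\nabla\u^\delta\in L^\infty$ and $\dv\u^\delta=0$ (the latter recovered via a Bogovskii-type correction, whose $W^{1,\infty}$ norm is controlled and which vanishes in the limit $\delta\to0$), then set $\mathbf y_j:=\mathbf R(\mathbf i+h_j\u^\delta)$ and correct to exactly satisfy $\det\nabla\mathbf y_j=1$ by a further $O(h_j)$ perturbation (possible since $\det(\mathbf I+h_j\nabla\u^\delta)=1+O(h_j^2)$ and one can absorb this defect through a divergence-controlled correction à la \cite{MP2}); a diagonal argument in $(\delta,j)$ then gives $\mathcal G^I_{h_j}(\mathbf y_j)\to\int_\om\mathcal Q^I(\x,\mathbb E(\u))-\mathcal L(\u)-\mathcal L((\mathbf R-\mathbf I)\u)$. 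Combining liminf and limsup yields convergence of infima and that $\u_*$ is a minimizer. The main obstacle I anticipate is the exact incompressibility correction in the recovery sequence — one must produce, for a given $\u^\delta$ with $\dv\u^\delta=0$, a perturbation $\mathbf w_j$ with $\|\nabla\mathbf w_j\|_\infty\to0$ (as $j\to\infty$, then $\delta\to0$) such that $\det(\mathbf I+h_j\nabla\u^\delta+\nabla\mathbf w_j)\equiv1$ pointwise, which requires solving a nonlinear (in fact prescribed-Jacobian) equation uniformly, and this is where the finite number of boundary components and a careful quantitative implicit-function / Dacorogna–Moser argument must be invoked, exactly as developed in \cite{MP2} and which I would cite or adapt verbatim.
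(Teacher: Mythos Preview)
Your proposal is correct and follows essentially the same route as the paper: uniform lower bound cited from \cite{MP2}, compactness via the rigidity estimate \eqref{muller} together with a good-set/bad-set splitting (the paper uses threshold $|\nabla\u_j|\le h_j^{-1/2}$) to upgrade $\u_*$ to $H^1$, liminf via Taylor expansion on the good set and weak $L^2$ lower semicontinuity of $\mathcal Q$, and recovery sequence via the approximation Lemma~\ref{Kato} from \cite{MP2}. One small correction: the hypothesis that $\partial\Omega$ has finitely many connected components is used \emph{only} in the recovery-sequence construction (Lemma~\ref{Kato}), not in showing $\dv\u_*=0$---the latter follows directly from expanding $\det(\mathbf I+h_j\nabla\u_j)=1$ and weak $L^p$ convergence, with no boundary/flux argument needed.
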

%the following Theorem holds true.
\begin{remark}\lab{-infty} \rm
Inequality in \eqref{comp} can never be reversed. Indeed let us assume \eqref{globalequi},  \eqref{framind}, \eqref{Z1}, \eqref{reg},    
\eqref{coerc} and  that there exists $\mathbf R_*\in SO(3)$ such that
\begin{equation}\lab{reversecomp}
\mathcal L((\mathbf R^*-\mathbf I)\x)> 0.
\end{equation}
Then it is readily seen that by setting $\mathbf y_j^*(\x)= \mathbf R^*\x$ we get
\begin{equation*}\lab{idemenergy}
\mathcal G_{h_{j}}(\mathbf y_j^*)=
-h_j^{-1}\mathcal L((\mathbf R^*-\mathbf I)\x)\to -\infty\qquad\mbox{as $j\to+\infty$}.
\end{equation*}
For instance,  we notice that if the body is subject to a uniform boundary compressive  force field  then \eqref{reversecomp} occurs for every $\mathbf R\in SO(3),\ \mathbf R\neq \mathbf I$. Indeed, if $\n$ denotes the outer unit normal vector to $\partial\om$, and we choose
$\mathbf g=\lambda\n$ with $\lambda< 0$ and $\mathbf f\equiv 0$, then
\[
\int_{\partial\Omega}\mathbf g\cdot(\mathbf R-\mathbf I) \x\ d\H^{2}(\x)\ =\ \lambda\,(\Tr(\mathbf R-\mathbf I))\,\vert\Omega\vert
\ >\ 0
\qquad \forall \
\mathbf R\!\in\! SO(3),\  \mathbf R\neq \mathbf I.\]
%\PPP Another example is the one from remark \ref{rem21}. \KKK
\end{remark}

\subsection*{The gap with linear elasticity}
 By summarizing, if \eqref{globalequi}, \eqref{comp},  \eqref{framind}, \eqref{Z1}, \eqref{reg},    
\eqref{coerc} are satisfied and $\mathcal S^0_{\mathcal L}\equiv\{\mathbf I\}$ then  functionals $\mathcal G^I$ and $\mathcal G$ are the classical functionals of linear incompressible and compressible elasticity, respectively.
%say
% \begin{equation*}
%\mathcal G^I(\u)\equiv\mathcal E^I(\u):=%\left\{\begin{array}{ll}
% \displaystyle\int_\Omega\mathcal Q^I(x,\mathbb E(\u))\,dx -\mathcal L(\u)
%\quad &\hbox{if }\ \dv\v=0\ \hbox{a.e. in}\ \om\\
%&\\
%+\infty\quad &\hbox{otherwise}.
%\end{array}\right.
 %\end{equation*}
 %and in this case Theorem \ref{mainth1} and Theorem  \ref{mainth1comp} are \MMM an improvement \KKK of Theorem 2.1 of \cite{MP2} and of Theorem 2.2  of \cite{MPTARMA}, respectively.
 On the other hand if \eqref{reversecomp} occurs then by Remark \ref{-infty} no convergence result is possible. In all other cases, although a full description of the limit functionals is given by means of  $\mathcal G$ and $\mathcal G^I$, \KKK it is not a priori \KKK clear if their  minimal values coincide with the minimal values \KKK of linear (compressible or incompressible) elasticity.
In order to complete the picture, we will show that there exist configurations and external forces satisfying the assumptions of Theorem {\rm \ref{mainth1comp}} and {\rm Theorem \ref{mainth1}} (and such that $\mathcal S^0_{\mathcal L}$ is not reduced to the identity matrix) for which 
\begin{equation}\label{ineq1}\min_{W^{1,p}(\om,\R^3)}\mathcal G< \min_{W^{1,p}(\om,\R^3)}\mathcal E
\end{equation}
and
\begin{equation}\label{ineq2}
\min_{W^{1,p}(\om,\R^3)}\mathcal G^I <\min_{W^{1,p}(\om,\R^3)}\mathcal E^I,
\end{equation}
Therefore, in such case,  given any vanishing sequence $(h_j)_{j\in\mathbb N}\subset(0,1)$ and any sequence $(\mathbf y_j)_{j\in\mathbb N}\subset W^{1,p}(\om,\R^3)$ of quasi-minimizers of $\mathcal G_{h_j}$ (resp. $\mathcal G^I_{h_j}$) there is no subsequence such 
that $\mathcal G_{h_j}(\mathbf y_j)\to\min\mathcal E$ (resp. $\mathcal G^I_{h_j}(\mathbf y_j)\to\min\mathcal E^I$). 
%\begin{remark}\rm \MMM da scrivere meglio: \KKK 
%\MMM i remark seguenti non li ho toccati\KKK
%\end{remark}
This shows that the minimal value of the usual functional of linearized elasticity $\mathcal E$ (resp. $\mathcal E^I$) \MMM need not be the correct  approximation of $\inf\mathcal G_h$  (resp. $\inf\mathcal G^I_h$) in the regime of small $h$. \KKK

\medskip

We are going to analyze in detail an example of validity of \eqref{ineq1} and \eqref{ineq2}. The setting for such an example is the following.  
 We assume that $\Omega$ is a cylinder: \begin{equation}\label{cyl2}\Omega:=\{(x,y,z)\in\mathbb R^3: x^2+y^2<1,\,0<z<1\}.\end{equation} %Here and in the following of this subsection, $B$ denotes the unit ball centered at the origin in the $xy$ plane, while $\nabla$ and $\Delta$ shall denote the gradient and the Laplacian in the $x,y$ variables, respectively. 
 We consider a particular choice for functional $\mathcal L$ from \eqref{external},
  letting $\mathbf g\equiv\mathbf 0$ and  letting
  $\mathbf f\in L^2(\Omega,\mathbb R^3)$ have a specific form. Namely, we let \begin{equation}\label{ef2}
  \mathcal L(\u)=\int_\Omega \mathbf f\cdot \u\qquad\mbox{with}\qquad
  \mathbf f(x,y,z):=(\varphi_{x}(x,y), \varphi_y(x,y),\psi(z)),\end{equation}
 where $\varphi$ and $\psi$ satisfy  
 the following restrictions (where $B$ denotes the unit ball centered at the origin in the $xy$ plane and $\Delta$ denotes the Laplacian in the $x,y$ variables):
 \begin{equation}\label{effe12}\tag{${\mathbf f\bb1}$}\begin{aligned}
  &\mbox{$\varphi\in C^2(\overline B)$   is radial,  there exists $(x,y)\in B$ such that $\Delta\varphi(x,y)\neq 0$,}\\[-5pt]
  & \mbox{$\phi(1)=\phi'(1)= \int_0^1 r^2\phi'(r)\,dr=0$,} \mbox{ where $\phi$ denotes the radial profile of $\varphi$,}\end{aligned}\end{equation}
 \begin{equation}\label{effe22}\tag{${\mathbf f\bb2}$}
 \mbox{$\psi\in C^0([0,1]),\quad
 \int_0^1\psi(z)\,dz=0, \quad\int_0^1z\psi(z)\,dz\ge 0$}.
 \end{equation}
 Condition \eqref{effe12} can be satisfied by choosing for instance $\varphi$ to be a suitable polynomial in the radial variable, like $\phi(r)=4r^6-9r^4+6r^2-1$.  As we will discuss in Section \ref{counterexamples}, with such choices of $\Omega$ and $\mathcal L$ the conditions \eqref{globalequi} and \eqref{comp} are satisfied and moreover $\{\mathbf I\}\subsetneq \mathcal S_{\mathcal L}^0$. We also have  $\mathcal S_{\mathcal L}^0\subsetneq SO(3)$ if $\int_0^1z\psi(z)\,dz> 0$ and $\mathcal S_{\mathcal L}^0\equiv SO(3)$ if $\int_0^1z\psi(z)\,dz=0.$ 

 Concerning the strain energy functionals, we choose 
 any function $\mathcal W(\x,\mathbf F)=\mathcal W (\mathbf F)$ satisfying assumptions  \eqref{framind}, \eqref{Z1}, \eqref{reg}, \eqref{coerc} and being such that 
 \begin{equation}
 \label{Venant3} 
 \frac12\,\mathbf F^T\, D^2\mathcal W(\mathbf I)\,\mathbf F=4\,|\mathbf F|^2\qquad\forall \ \mathbf F\in \R^{3\times3}.
 \end{equation}
  Accordingly we let $\mathcal W^I(\x,\mathbf F)=\mathcal W^I(\mathbf F)$ be equal to $\mathcal W(\mathbf F)$ if $\det\mathbf F=1$ and equal to $+\infty$ otherwise.
 An example is the homogeneous Kirchoff - Saint-Venant energy, obtained by setting
 \begin{equation*}\label{Venant2}\mathcal W(\x,\mathbf F)=\mathcal W(\mathbf F)=|\mathbf F^T\mathbf F-\mathbf I|^2.\end{equation*}
 % Therefore,   (with $p=2$) are of course satisfied by $\mathcal W$.
  %By this choice of $\mathcal W$, it is readily seen that we have $$. 
  We have the following
%so that the corresponding functional of linearized elasticity is
% \[
% \mathcal E(\u):=4\int_{\Omega}|\mathbb E(\u)|^2-\mathcal L(\u),\qquad \u\in H^1(\om,\R^3),
% \] 
%  while the limit functional $\mathcal G$ becomes
% \begin{equation*}
%\displaystyle {\mathcal G}(\u)= 4\int_\om | \mathbb E(\u)|^2\,dx-\mathcal L(\u)-\max_{\mathbf R\in\mathcal S^0_{\mathcal L}}\mathcal L((\mathbf R-\mathbf I)x),\qquad  \u\in H^1(\om,\mathbb R^3).
%%\end{array}\right.
%\end{equation*}
%For the following arguments, it is also convenient to introduce the auxiliary functional
%\[
%\tilde{\mathcal G}(\u):=\displaystyle4\int_{\Omega}|\mathbb E(\u)|^2-\mathcal L(\tilde{\mathbf R}^T\u),\qquad \u\in H^1(\om,\mathbb R^3)\]
%where $\tilde {\mathbf R}$ is given by \eqref{tildematrix}.

 \begin{theorem}\label{mainth3}
  %Suppose that $\Delta\varphi$ is not identically zero on $B$. 
  % Assume \eqref{effe1} and \eqref{effe2}. 
  Assume \eqref{cyl2}, \eqref{ef2}, \eqref{effe12}, \eqref{effe22}, \eqref{framind}, \eqref{Z1}, \eqref{reg}, \eqref{coerc} and \eqref{Venant3}. Then, the assumptions of {\rm Theorem \ref{mainth1comp}} and of {\rm Theorem \ref{mainth1}} are satisfied,
 \eqref{ineq1} holds true,
% there holds $$\min_{H^1(\om,\R^3)}\mathcal G\le \min_{H^1(\Omega,\mathbb R^3)}\tilde{\mathcal G}<\min_{H^1(\Omega,\mathbb R^3)}{\mathcal E},$$
%There holds $\tilde{\mathcal F}^I\ge0$ for every $H^1(\Omega;\mathbb R^d)$  with equality if and only if $\mathbb E(\v)=0$.
%Moreover, if $\psi\equiv0$ on $(0,1)$, there holds
and if  $\|\psi\|_{L^2(0,1)}$ is small enough \eqref{ineq2} holds true as well. \KKK
%$$\min_{H^1(\om,\R^3)}{\mathcal G}^I\le\min_{H^1(\Omega,\mathbb R^3)}\tilde{\mathcal G}^I< \min_{H^1(\Omega,\mathbb R^3)}\mathcal E^I.$$
 \end{theorem}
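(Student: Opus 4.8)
\textbf{Proof plan for Theorem~\ref{mainth3}.}
The plan is to reduce the optimization problems defining $\min\mathcal G$ and $\min\mathcal G^I$ to the linear elasticity problem with rotated loads, and then to detect the gap by computing the second variation of the optimal linear energy along the one–parameter group of rotations about the cylinder axis. Throughout write $\x=(x,y,z)$, let $\Delta$ and $\nabla'$ be the Laplacian and gradient in the $(x,y)$ variables, let $B$ be the unit disk in the $xy$–plane, and let $\mathbf W_z\in\mathbb R^{3\times3}_{\mathrm{skew}}$ be defined by $\mathbf W_z\x=(-y,x,0)$, so that $\mathbf R_t:=e^{t\mathbf W_z}\in SO(3)$ is the rotation by angle $t$ about the $z$–axis.

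\emph{Step 1: hypotheses and rotation kernel.} Using that $\varphi=\phi(\sqrt{x^2+y^2})$ is radial, we evaluate in polar coordinates $\int_\Omega\mathbf f\,d\x$, $\int_\Omega\x\times\mathbf f\,d\x$ and $M_{ij}:=\int_\Omega f_i x_j\,d\x$. The relations $\phi(1)=\phi'(1)=\int_0^1 r^2\phi'(r)\,dr=0$ (the last equivalent to $\int_B\varphi=0$) together with $\int_0^1\psi=0$ give \eqref{globalequi} and $\mathcal L(\mathbf W\x)=0$ for every skew $\mathbf W$; moreover $M$ is diagonal with $M_{11}=M_{22}=0$ and $M_{33}=\pi\gamma$, $\gamma:=\int_0^1 z\psi(z)\,dz\ge0$, so $\mathcal L(\mathbf W^2\x)=\pi\gamma\,(\mathbf W^2)_{33}\le0$ for every skew $\mathbf W$ (since $(\mathbf W^2)_{33}=-\sum_k W_{3k}^2\le0$), which is \eqref{comp} via Remark~\ref{rem21}. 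Hence the assumptions of Theorems~\ref{mainth1comp} and~\ref{mainth1} are satisfied (the remaining ones being assumed on $\mathcal W$, and $\partial\Omega$ being connected), and by Remark~\ref{subgroup} $\mathcal S^0_{\mathcal L}=SO(3)$ if $\gamma=0$ while $\mathcal S^0_{\mathcal L}=\{\mathbf R_t:t\in\mathbb R\}$ if $\gamma>0$; in both cases $\{\mathbf R_t:t\in\mathbb R\}\subseteq\mathcal S^0_{\mathcal L}$ and $\{\mathbf I\}\subsetneq\mathcal S^0_{\mathcal L}$.

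\emph{Step 2: reduction to a linear problem.} Since $\mathbf g\equiv\mathbf0$, $\mathcal L(\mathbf R\u)=\int_\Omega(\mathbf R^{T}\mathbf f)\cdot\u\,d\x$, hence for $\u\in H^1(\Omega,\mathbb R^3)$
\[
\mathcal G(\u)=\min_{\mathbf R\in\mathcal S^0_{\mathcal L}}\Big(\int_\Omega\mathcal Q(\x,\mathbb E(\u))\,d\x-\int_\Omega(\mathbf R^{T}\mathbf f)\cdot\u\,d\x\Big),
\]
and similarly for $\mathcal G^I$ with $\mathcal Q^I$, which in addition forces $\dv\u=0$. By compactness of $\mathcal S^0_{\mathcal L}$ and Theorem~\ref{mainth1comp} (resp. Theorem~\ref{mainth1}) this yields $\min\mathcal G=\min_{\mathbf R\in\mathcal S^0_{\mathcal L}}m(\mathbf R^{T}\mathbf f)$ and $\min\mathcal G^I=\min_{\mathbf R\in\mathcal S^0_{\mathcal L}}m^I(\mathbf R^{T}\mathbf f)$, where $m(\mathbf h)$ (resp. $m^I(\mathbf h)$) is the minimum of $\int_\Omega\mathcal Q(\x,\mathbb E(\u))\,d\x-\int_\Omega\mathbf h\cdot\u\,d\x$ over $H^1(\Omega,\mathbb R^3)$ (resp. over divergence–free fields); also $\min\mathcal E=m(\mathbf f)$ and $\min\mathcal E^I=m^I(\mathbf f)$. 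By \eqref{Venant3}, $\mathcal Q(\x,\mathbf F)=4|\mathbf F|^2$, so a minimizer $\u_\mathbf h$ satisfies $8\int_\Omega\mathbb E(\u_\mathbf h):\mathbb E(\v)=\int_\Omega\mathbf h\cdot\v$ for all admissible $\v$, whence $m(\mathbf h)=-4\int_\Omega|\mathbb E(\u_\mathbf h)|^2=-\tfrac12\int_\Omega\mathbf h\cdot\u_\mathbf h$ and $\mathbf h\mapsto m(\mathbf h)$ is a smooth concave quadratic form (same for $m^I$). Consequently, to obtain \eqref{ineq1} (resp. \eqref{ineq2}) it suffices to prove that $F(t):=m(\mathbf R_t^{T}\mathbf f)$ (resp. $F^I(t):=m^I(\mathbf R_t^{T}\mathbf f)$) is not minimized at $t=0$, since then $\min\mathcal G\le\min_t F(t)<F(0)=\min\mathcal E$.

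\emph{Step 3: the second variation (compressible case).} By uniqueness of the minimizer up to infinitesimal rigid motions and the $z$–rotation symmetry of $(\Omega,\mathbf f)$, a minimizer $\u_\mathbf f$ of $\mathcal E$ may be chosen equivariant; since $\mathbf f$ has no azimuthal component and the azimuthal part of an equivariant field contributes nonnegatively to $\int|\mathbb E(\cdot)|^2$ and nothing to $\mathcal L$, we may further take $\u_\mathbf f$ with vanishing azimuthal component. Then $\mathbf W_z\u_\mathbf f$ is purely azimuthal, so $\mathcal L(\mathbf W_z\u_\mathbf f)=0$ and $F'(0)=\int_\Omega(\mathbf W_z\mathbf f)\cdot\u_\mathbf f=-\mathcal L(\mathbf W_z\u_\mathbf f)=0$. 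Differentiating once more,
\[
F''(0)=-\mathcal L(\mathbf W_z^2\,\u_\mathbf f)-8\int_\Omega|\mathbb E(\u_{\mathbf W_z\mathbf f})|^2 .
\]
Both terms are computed in closed form: solving $-8\,\dv\,\mathbb E(\u_\mathbf f)=\mathbf f$ with the natural boundary conditions gives $\u_\mathbf f=(\nabla'a(r),B(z))$ with $\Delta a=-\varphi/8$, $B''=-\psi/8$, $B'(0)=B'(1)=0$, and an integration by parts on $B$ (using $\varphi=0$ on $\partial B$) yields $\mathcal L(\mathbf W_z^2\u_\mathbf f)=-\tfrac18\int_B\varphi^2$; for the azimuthal load $\mathbf W_z\mathbf f=\tfrac{\phi'(r)}r(-y,x,0)$ the minimizer is the swirl $\u_{\mathbf W_z\mathbf f}=w(r)(-y,x,0)$ with $(r^3w')'=-\tfrac14r^2\phi'$ and $w'(1)=0$ (the boundary conditions holding thanks to \eqref{effe12}). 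Using $\Delta a=-\varphi/8$, $a'(1)=0$ and $\phi(1)=\phi'(1)=0$, a short computation shows that $\tfrac18\int_B\varphi^2=16\pi I$ and $8\int_\Omega|\mathbb E(\u_{\mathbf W_z\mathbf f})|^2=32\pi I$, where $I:=\int_0^1\!\big(r\,a''(r)^2+a'(r)^2/r\big)\,dr$. Hence $F''(0)=16\pi I-32\pi I=-16\pi I$; since $\Delta\varphi\not\equiv0$ makes $a$ non–affine, $I>0$, so $t=0$ is a strict local maximum of $F$, and therefore $\min\mathcal G\le\min_t F(t)<\min\mathcal E$, which is \eqref{ineq1}.

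\emph{Step 4: the incompressible case and the main difficulty.} The same scheme applies to $F^I$, with $F^I{}'(0)=0$ for the same reason. The swirl $w(r)(-y,x,0)$ is divergence–free, hence still the minimizer of the azimuthal load problem among divergence–free fields, so the last term of $F^I{}''(0)$ is again $32\pi I$. For $\psi\equiv0$ one has $\mathbf f=(\varphi_x,\varphi_y,0)$, whence $-\mathcal L(\mathbf W_z^2\u^I_\mathbf f)=\mathcal L(\u^I_\mathbf f)=8\int_\Omega|\mathbb E(\u^I_\mathbf f)|^2$; since the divergence–free minimum is not below the unconstrained one, $8\int_\Omega|\mathbb E(\u^I_\mathbf f)|^2\le8\int_\Omega|\mathbb E(\u_\mathbf f)|^2=16\pi I$, so $F^I{}''(0)\le16\pi I-32\pi I<0$ when $\psi\equiv0$. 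Finally $\u^I_\mathbf f$ depends continuously on $\mathbf f$ in $L^2$, and $\|\mathbf f-(\varphi_x,\varphi_y,0)\|_{L^2(\Omega)}=\sqrt\pi\,\|\psi\|_{L^2(0,1)}$, so $-\mathcal L(\mathbf W_z^2\u^I_\mathbf f)=\int_\Omega\big(\varphi_x(\u^I_\mathbf f)_1+\varphi_y(\u^I_\mathbf f)_2\big)\,d\x$ varies continuously with $\psi$; hence $F^I{}''(0)<0$ persists for $\|\psi\|_{L^2(0,1)}$ small, giving \eqref{ineq2}. The technical heart is Step~3 — solving the two cylinder elasticity systems and, above all, recognizing the algebraic identity that makes $\int_B\varphi^2$ and $\int_\Omega|\mathbb E(\u_{\mathbf W_z\mathbf f})|^2$ proportional to the single quantity $I$, which produces the clean sign of the second variation; the incompressible case carries the extra difficulty that $\u^I_\mathbf f$ is unavailable in closed form for $\psi\ne0$, so strictness is obtained only by perturbing off $\psi\equiv0$.
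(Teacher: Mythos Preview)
Your proof is correct and takes a genuinely different route from the paper's. The paper fixes a single rotation $\tilde{\mathbf R}=\mathbf R_{\pi/2}$ and compares the two minimal values directly: it \emph{upper-bounds} $\min\tilde{\mathcal G}$ by restricting to displacements $(u_y,-u_x,w(z))$ and reducing to a scalar problem on $H^2(B)$, \emph{lower-bounds} $\min\mathcal E$ via Jensen's inequality (averaging in $z$, then in $(x,y)$) and the explicit radial solution of the planar Neumann system, and then obtains strictness from the elementary inequality $2(\Phi_{yy}-\Phi_{xx})^2\le 4\Phi_{xx}^2+4\Phi_{yy}^2$, which is strict because $\Delta\Phi\not\equiv0$. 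You instead parametrize by the whole one-parameter group $\mathbf R_t$ and show via an explicit second-variation computation that $t=0$ is a strict local maximum of $t\mapsto m(\mathbf R_t^{T}\mathbf f)$, hence $\min\mathcal G\le\min_t F(t)<F(0)=\min\mathcal E$; the key algebraic point---that $\tfrac18\int_B\varphi^2$ and $8\int_\Omega|\mathbb E(\u_{\mathbf W_z\mathbf f})|^2$ are both multiples of the single quantity $I$---is correct (it reduces to the identity $\int_0^1 r\phi^2\,dr=\int_0^1 r^{-3}\big(\int_0^r t^2\phi'\big)^2\,dr$, valid thanks to \eqref{effe12}).

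Each approach has its merits. Yours is more conceptual: the gap is explained by the single inequality $F''(0)<0$, and it makes transparent why the incompressible case needs a smallness assumption on $\psi$ (the swirl minimizer is already divergence-free, so only the term $-\mathcal L(\mathbf W_z^2\u^I_\mathbf f)$ moves with $\psi$, and it does so continuously). The paper's approach is more quantitative in that it pins down the actual optimal angle $\theta=\pm\pi/2$ (developed further in Proposition~\ref{pro52} and the subsequent proposition), whereas yours only asserts that the optimum is not at $\theta=0$. A minor remark: in Step~2 you do not really need Theorems~\ref{mainth1comp} and~\ref{mainth1} to obtain $\min\mathcal G=\min_{\mathbf R\in\mathcal S^0_{\mathcal L}}m(\mathbf R^{T}\mathbf f)$; this is just the interchange $\min_\u\min_{\mathbf R}=\min_{\mathbf R}\min_\u$ together with compactness of $\mathcal S^0_{\mathcal L}$ and existence of linear-elasticity minimizers.
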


\begin{remark}\rm 
In the assumptions  of   Theorem \ref{mainth3}, let us consider the rescaled displacement fields $\v_j$ and the generalized rescaled displacement fields $\u_j$ associated to a sequence of quasi-minimizers $(\mathbf y_j)\subset W^{1,p}(\om,\mathbb R^3)$ of $\mathcal G_{h_j}$ (or $\mathcal G^I_{h_j}$). Since $\v_j(\x)=h_j^{-1}(\mathbf y_j(\x)-\x)$ and $\u_j(\x)=h_j^{-1}(\mathbf R_j^T\mathbf y_j(\x)-\x)$,  where $\mathbf R_j\in\mathcal A_p(\mathbf y_j)$,  \KKK we have
\begin{equation}\label{uv}
%\nabla\v_j(x)=\frac{\mathbf R_j-\mathbf I}{h_j}-\mathbf R_j\nabla\u_j,\qquad 
\mathbb E(\v_j)=\frac{\mathbf R_j^T+\mathbf R_j-2\mathbf I}{2h_j}-\mathrm{sym}(\mathbf R_j\nabla\u_j).
\end{equation}
Along a suitable subsequence we have $\nabla \u_j\to \nabla \u$ weakly in $L^p(\om,\mathbb R^{3\times3})$ and $\mathbf R_j\to\mathbf R_*\in \mathcal S_{\mathcal L}^0$ as we will show in Section \ref{proofsection}.
%But In fact, along such a sequence the limit of rotations $\mathbf R_*$ is not the identity, and therefore \eqref{uv} shows that $\|\mathbb E(\v_j)\|_{L^p}$ is unbounded.
However, along the same sequence, $\mathbb E(\v_j)$ is unbounded in $L^p(\om,\mathbb R^{3\times3})$, otherwise the results of \cite{MPTARMA} and \cite{MP2} would entail convergence to the minimal value of the standard linearized elasticity functional, in contrast with Theorem \ref{mainth3}.  In fact, Theorem \ref{mainth3} shows that $\mathbf R_*\neq\mathbf I$: the optimal rotation at a minimizer $\u$ of $\mathcal G$ (or $\mathcal G^I$) is the limit of the rotations $\mathbf R_j$, it is not the identity matrix and then \eqref{uv} confirms that $\mathbb E(\v_j)$ is unbounded in $L^p(\om,\mathbb R^{3\times3})$. \KKK
\end{remark}

\subsection*{Rotated external forces with no gap}
%The above inequalities are obviously equalities in case $\mathcal S^0_{\mathcal L}\equiv\{\mathbf I\}$. 
%We are going to prove the latter theorem by showing some specific examples in Section \ref{counterexamples}, in which strict inequality does indeed hold. 
 %Under the starting assumptions \eqref{globalequi} and \eqref{comp}, in view of the general definition of functionals $\mathcal G$ and $\mathcal G^I$, 
 Going back to the general setting,
 it is clear that a necessary condition for the validity of \eqref{ineq1} and \eqref{ineq2} is that $\mathcal S_{\mathcal L}^0$ is not reduced to the identity matrix. 
However, such a condition is not sufficient for the presence of a gap with linear elasticity. This is immediately seen by choosing $\mathbf f\equiv \mathbf 0$ and $\mathbf g\equiv \mathbf 0$, in which case $\mathcal S^0_{\mathcal L}\equiv SO(3)$ but of course
all the four minima appearing in \eqref{ineq1}-\eqref{ineq2} are equal to zero. A much more general result holds, showing that the appearance of the gap is strongly influenced by the initial choice of the external loads. 
%Indeed, with our last result we check that given any external forces $\mathbf f,\mathbf g$ satisfying \eqref{globalequi} and $\eqref{comp}$, there exists a rotation matrix $\mathbf R$ such that the rotated external forces $\mathbf R\mathbf f,\mathbf R\mathbf g$ produce no gap with linear elasticity.
%Assume \eqref{globalequi},\eqref{comp},  \eqref{framind}, \eqref{Z1}, \eqref{reg},    
%\eqref{coerc}. 
Before providing such result, we  introduce some further notation.
 For every $\mathbf R\in SO(3)$, let
 \begin{equation*}
\displaystyle {\mathcal G}_{h,\mathbf R}(\mathbf y):= h^{-2}\int_\om \mathcal W(\x,\nabla\mathbf y)\,d\x- h^{-1}\mathcal L_{\mathbf R}(\mathbf y(\x)-\x),\qquad \mathbf y\in W^{1,p}(\om,\mathbb R^3),
\end{equation*}
\begin{equation*}\label{rotelfunc}
\displaystyle {\mathcal G}_{\mathbf R}(\v):=\left\{\begin{array}{ll}\displaystyle \int_\om \mathcal Q(\x, \mathbb E(\v))\,d\x-\max_{\hat{\mathbf  R}\in\mathcal S_{\mathcal L_{\mathbf R}}^0}\mathcal L_{\mathbf R}(\hat{\mathbf R}\v) &\hbox{if} \ \v\in H^1(\om,\mathbb R^3)\\
&\\
 \ \!\!+\infty\quad &\hbox{otherwise in} \ W^{1,p}(\om,\mathbb R^3),
\end{array}\right.
\end{equation*}
where $\mathcal L_{\mathbf R}: W^{1,p}(\om,\mathbb R^3)\to\mathbb R$ is defined as
\begin{equation}\label{elleerre}
\mathcal L_{\mathbf R}(\v):=\mathcal L(\mathbf R\v)=\int_\om\mathbf R^T\mathbf f\cdot\v\,d\x-\int_{\partial\om}\mathbf R^T\mathbf f\cdot\v\,d\mathcal H^2(\x),
\end{equation}
so that $\mathcal L_{\mathbf R}$ is just the usual load functional associated to the external forces $\mathbf R^T\mathbf f,\mathbf R^T\mathbf g$.
Similarly, $\mathcal E_{\mathbf R}$ is defined by replacing $\mathcal L$ with $\mathcal L_{\mathbf R}$ in the definition  of $\mathcal E$.
The corresponding functionals ${\mathcal G}^I_{h,\mathbf R},\mathcal G^I_{\mathbf R},\mathcal E^I_{\mathbf R}$ of incompressible elasticity are also defined  by replacing $\mathcal L$ with $\mathcal L_{\mathbf R}$ in the definition  of $\mathcal G^I_h, \mathcal G^I$ and $\mathcal E^I$, respectively.
\begin{remark} \rm
Since $\mathcal L$ satisfies \eqref{globalequi} and \eqref{comp}, it is clear that given $\mathbf R\in\mathcal S_{\mathcal L}^0$ there holds $\mathcal L_{\mathbf R}(\mathbf c)=0$ for every $\mathbf c\in\mathbb R^3$ and 
  \begin{equation}\label{eqi}
  \mathcal L_{\mathbf R}((\mathbf S-\mathbf I)\x)=\mathcal L((\mathbf R\mathbf S-\mathbf I)\x)-\mathcal L((\mathbf R-\mathbf I)\x)=\mathcal L((\mathbf R\mathbf S-\mathbf I)\x)\le 0
  \end{equation}
  for every $\mathbf S\in SO(3)$, thus showing that $\mathcal L_{\mathbf R}$ satisfies   \eqref{globalequi} and \eqref{comp} as well.
  Therefore, by Theorem \ref{mainth1comp} and Theorem \ref{mainth1}, functionals ${\mathcal G}_{\mathbf R}$ and $\mathcal G^I_{\mathbf R}$ can be viewed as the limit of functionals ${\mathcal G}_{h,\mathbf R}$ and ${\mathcal G}^I_{h,\mathbf R}$ respectively.
  \end{remark}
  
 \begin{theorem}\label{rotheorem} Assume \eqref{globalequi}, \eqref{comp},  \eqref{framind}, \eqref{Z1}, \eqref{reg},    
\eqref{coerc}. If $\mathbf R\in \mathcal S_{\mathcal L}^0$, the rotated load functional $\mathcal L_{\mathbf R}$  still satisfies $\eqref{globalequi}$ and $\eqref{comp}$,  and $\mathcal S_{\mathcal L}^0\equiv\mathcal S^0_{\mathcal L_{\mathbf R}}$. Moreover, if $\u$ minimizes $\mathcal G$ (resp. $\mathcal G^I$) over $W^{1,p}(\om,\R^3)$ and $\mathbf R\in\mathcal S_{\mathcal L}^0$ realizes the maximum in the definition of $\mathcal G(\u)$ (resp. $\mathcal G^I(\u)$), then $\u$ minimizes $\mathcal G_{\mathbf R}$ (resp. $\mathcal G^I_{\mathbf R}$) over $W^{1,p}(\om,\R^3)$  and % Moreover, there exists  $\mathbf R\in \mathcal S_{\mathcal L}^0$ such that 
\[
\min_{W^{1,p}(\om\R^3)}\mathcal G_{\mathbf R}=\min_{W^{1,p}(\om\R^3)}\mathcal E_{\mathbf R}\qquad(\mbox{resp.}\;\min_{W^{1,p}(\om\R^3)}\mathcal G^I_{\mathbf R}=\min_{W^{1,p}(\om\R^3)}\mathcal E^I_{\mathbf R}).
\]
%the forces $\mathbf R^T\mathbf f,\mathbf R^T\mathbf g$ produce no gap with linearized elasticity. %That is, $\{\mathbf I\}\subsetneq\mathcal S_{\mathcal L_{\mathbf R}}^0$  and $\min\mathcal G_{\mathbf R}=\min\mathcal E_{\mathbf R}$.
 \end{theorem}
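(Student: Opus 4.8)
The plan is to reduce the statement to the observation that, when $\mathbf R\in\mathcal S_{\mathcal L}^0$, rotating the loads leaves the limit functionals unchanged, i.e. $\mathcal G_{\mathbf R}\equiv\mathcal G$ and $\mathcal G^I_{\mathbf R}\equiv\mathcal G^I$ on $W^{1,p}(\om,\mathbb R^3)$, and then to exploit the rotation $\mathbf R$ that realizes the maximum in $\mathcal G(\u)$ (resp. $\mathcal G^I(\u)$) in order to identify $\min\mathcal G_{\mathbf R}$ with $\min\mathcal E_{\mathbf R}$. First I would dispatch the claims on $\mathcal L_{\mathbf R}$: since $\mathbf R\mathbf c\in\mathbb R^3$, \eqref{globalequi} for $\mathcal L$ gives $\mathcal L_{\mathbf R}(\mathbf c)=\mathcal L(\mathbf R\mathbf c)=0$; and for $\mathbf S\in SO(3)$ the splitting $\mathbf R(\mathbf S-\mathbf I)\x=(\mathbf R\mathbf S-\mathbf I)\x-(\mathbf R-\mathbf I)\x$, together with $\mathbf R\in\mathcal S_{\mathcal L}^0$ and \eqref{comp} for $\mathcal L$, gives $\mathcal L_{\mathbf R}((\mathbf S-\mathbf I)\x)=\mathcal L((\mathbf R\mathbf S-\mathbf I)\x)\le 0$, which is exactly \eqref{eqi}. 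The same identity shows that $\mathbf S\in\mathcal S_{\mathcal L_{\mathbf R}}^0$ if and only if $\mathbf R\mathbf S\in\mathcal S_{\mathcal L}^0$; since $\mathcal S_{\mathcal L}^0$ is a subgroup of $SO(3)$ containing $\mathbf R$ (Remark \ref{subgroup}), this is equivalent to $\mathbf S\in\mathbf R^{-1}\mathcal S_{\mathcal L}^0=\mathcal S_{\mathcal L}^0$, hence $\mathcal S_{\mathcal L_{\mathbf R}}^0=\mathcal S_{\mathcal L}^0$.

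Next I would prove $\mathcal G_{\mathbf R}\equiv\mathcal G$. Both functionals are $+\infty$ off $H^1(\om,\mathbb R^3)$, so let $\v\in H^1(\om,\mathbb R^3)$. Using $\mathcal S_{\mathcal L_{\mathbf R}}^0=\mathcal S_{\mathcal L}^0$, the identity $\mathcal L_{\mathbf R}(\hat{\mathbf R}\v)=\mathcal L(\mathbf R\hat{\mathbf R}\v)$, and the change of variable $\mathbf S=\mathbf R\hat{\mathbf R}$ — which, because $\mathbf R\in\mathcal S_{\mathcal L}^0$, is a bijection of the group $\mathcal S_{\mathcal L}^0$ onto itself — I obtain
\[\max_{\hat{\mathbf R}\in\mathcal S_{\mathcal L_{\mathbf R}}^0}\mathcal L_{\mathbf R}(\hat{\mathbf R}\v)=\max_{\mathbf S\in\mathcal S_{\mathcal L}^0}\mathcal L(\mathbf S\v)=\mathcal L(\v)+\max_{\mathbf S\in\mathcal S_{\mathcal L}^0}\mathcal L((\mathbf S-\mathbf I)\v),\]
so that $\mathcal G_{\mathbf R}(\v)=\int_\om\mathcal Q(\x,\mathbb E(\v))\,d\x-\mathcal L(\v)-\max_{\mathbf S\in\mathcal S_{\mathcal L}^0}\mathcal L((\mathbf S-\mathbf I)\v)=\mathcal G(\v)$. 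Replacing $\mathcal Q$ by $\mathcal Q^I$ everywhere yields $\mathcal G^I_{\mathbf R}\equiv\mathcal G^I$. In particular any minimizer $\u$ of $\mathcal G$ (resp. $\mathcal G^I$) over $W^{1,p}(\om,\mathbb R^3)$ is automatically a minimizer of $\mathcal G_{\mathbf R}$ (resp. $\mathcal G^I_{\mathbf R}$).

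It then remains to match the minimal values. Since $\mathbf I\in\mathcal S_{\mathcal L_{\mathbf R}}^0$, we have $\max_{\hat{\mathbf R}\in\mathcal S_{\mathcal L_{\mathbf R}}^0}\mathcal L_{\mathbf R}(\hat{\mathbf R}\v)\ge\mathcal L_{\mathbf R}(\v)$ for every $\v$, hence $\mathcal G_{\mathbf R}\le\mathcal E_{\mathbf R}$ pointwise and $\inf\mathcal G_{\mathbf R}\le\inf\mathcal E_{\mathbf R}$. On the other hand, by hypothesis $\mathbf R$ realizes the maximum in $\mathcal G(\u)$, i.e. $\max_{\mathbf S\in\mathcal S_{\mathcal L}^0}\mathcal L((\mathbf S-\mathbf I)\u)=\mathcal L((\mathbf R-\mathbf I)\u)$, equivalently $\max_{\mathbf S\in\mathcal S_{\mathcal L}^0}\mathcal L(\mathbf S\u)=\mathcal L(\mathbf R\u)=\mathcal L_{\mathbf R}(\u)$, so that, using $\mathcal G_{\mathbf R}\equiv\mathcal G$,
\[\min\mathcal G_{\mathbf R}=\mathcal G(\u)=\int_\om\mathcal Q(\x,\mathbb E(\u))\,d\x-\mathcal L_{\mathbf R}(\u)=\mathcal E_{\mathbf R}(\u)\ge\inf\mathcal E_{\mathbf R}.\]
Chaining the two inequalities gives $\min\mathcal G_{\mathbf R}=\mathcal E_{\mathbf R}(\u)=\inf\mathcal E_{\mathbf R}=\min\mathcal E_{\mathbf R}$, so that $\u$ minimizes $\mathcal E_{\mathbf R}$ as well; the incompressible claim follows verbatim with $\mathcal Q^I,\mathcal G^I,\mathcal G^I_{\mathbf R},\mathcal E^I_{\mathbf R}$ in place of $\mathcal Q,\mathcal G,\mathcal G_{\mathbf R},\mathcal E_{\mathbf R}$.

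I expect no genuine analytic difficulty here; the one point that must be handled carefully is the group bookkeeping in the second step — namely, that precomposing the loads with an element of the rotation kernel $\mathcal S_{\mathcal L}^0$ merely relabels that kernel (via its subgroup structure) and hence leaves $\mathcal G$ and $\mathcal G^I$ unchanged, while it simultaneously slides the fixed linearized functional $\mathcal E$ (resp. $\mathcal E^I$) onto exactly the tangent approximation that is selected at the minimizer of $\mathcal G$ (resp. $\mathcal G^I$).
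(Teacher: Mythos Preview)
Your proof is correct and follows essentially the same route as the paper: both arguments rest on the subgroup structure of $\mathcal S_{\mathcal L}^0$ and the identity \eqref{eqi}, and both reduce the final claim to showing that at the optimal $\u$ the choice $\hat{\mathbf R}=\mathbf I$ is maximal in $\mathcal G_{\mathbf R}(\u)$. Your presentation is in fact slightly cleaner than the paper's, since you isolate the pointwise identity $\mathcal G_{\mathbf R}\equiv\mathcal G$ (via the left-translation bijection $\hat{\mathbf R}\mapsto\mathbf R\hat{\mathbf R}$ of $\mathcal S_{\mathcal L}^0$) and then argue directly, whereas the paper establishes the same conclusion by a contradiction argument without stating that identity explicitly.
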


% \begin{remark}\rm 
%Of course, if $\u$ minimizes $\mathcal G$, and an optimal rotation realizing the maximum in the definition of $\mathcal G(\u)$ is the identity matrix, then there is no gap between $\min\mathcal G$ and $\min\mathcal E$.
% Therefore, an important consequence of Proposition \ref{gapfill} 
%is the following.  There is a rotation matrix in $\mathbf R\in\mathcal S_{\mathcal L}^0$ such that by replacing the external forces $\mathbf f,\mathbf g$ with $\mathbf R^T\mathbf f,\mathbf R^T\mathbf g$, 
%the resulting scaled functionals of finite elasticity  do not exhibit a gap, as $h\to 0$, with the corresponding linearized elasticity functional
%$$\mathcal E_{\mathbf R}(\v):=\int_\om\mathcal Q(\x,\mathbb E(\v))\,d\x-\int_\om\mathbf R^T\mathbf f\cdot\v\,d\x-\int_{\partial\om}\mathbf R^T\mathbf f\cdot\v\,d\mathcal H^2(\x),$$
% i.e.,
%$\min\mathcal G_{\mathbf R}=\min\mathcal E_{\mathbf R}.
%$
%The appearance of a gap is therefore strongly influenced by the initial choice of the external forces.
% \end{remark}

\begin{remark}\label{29} \rm
%If  $\mathcal S_{\mathcal L}^0$ is not reduced to the identity matrix, and if $\mathbf R\in \mathcal S_{\mathcal L}^0$, then it is immediately seen that
%$\mathbf R\in \mathcal S_{\mathcal L_{\mathbf R}}^0$. 
Given any external forces $\mathbf f,\mathbf g$ satisfying \eqref{globalequi}-\eqref{comp}, the
 above theorem yields the existence of new external forces satisfying \eqref{globalequi}-\eqref{comp}, having the same rotation kernel as $\mathbf f,\mathbf g$,   for which there is no gap with linear elasticity.
 %even if the corresponding rotation kernel is not reduced to the identity matrix. 
%In some cases there also holds $\mathcal S_{\mathcal L_{\mathbf R}}^0\equiv  \mathcal S_{\mathcal L}^0$. 
%Indeed, this can be easily checked if $\mathcal S_{\mathcal L}^0=SO(3)$ or if $\mathcal S_{\mathcal L}^0=\{e^{t\mathbf W}:t\in\mathbb R\}$ for some $\mathbf W\in\mathbb R^{3\times3}_{\mathrm{Skew}}$, \PPP but it is not true in general. 
%The above are among the possible structures of $\mathcal S_{\mathcal L}^0$ as described in detail in \cite[Proposition 6.2]{MM}. These are also the structures that we encounter in the examples of gap in Section \ref{counterexamples}. \KKK
%The other possibility is a two-dimensional kernel, isometric to $SO(2)$, in which case th
%In the case that $\mathcal S_{\mathcal L}^0$ is a group (which is true if it is one-dimensional or coinciding with $SO(3)$), then it is readily seen that for every $\mathbf R\in \mathcal S_{\mathcal L}^0$, we have $\mathcal S_{\mathcal L_{\mathbf R}}^0\equiv  \mathcal S_{\mathcal L}^0$.
\end{remark}
 
 We include in this section  the straightforward proof of Theorem \ref{rotheorem}.

%  \begin{proposition}\label{gapfill} Assume \eqref{globalequi},\eqref{comp},  \eqref{framind}, \eqref{Z1}, \eqref{reg},    
%\eqref{coerc}.  For every $\mathbf R\in SO(3)$, let
%\begin{equation*}\lab{elfunc}
%\displaystyle {\mathcal G}_{\mathbf R}(\v):=\left\{\begin{array}{ll}\displaystyle \int_\om \mathcal Q(\x, \mathbb E(\v))\,d\x-\max_{\hat{\mathbf  R}\in\mathcal S_{\mathcal L_{\mathbf R}}^0}\mathcal L_{\mathbf R}(\hat{\mathbf R}\v) &\hbox{if} \ \v\in H^1(\om,\mathbb R^3)\\
%&\\
% \ \!\!+\infty\quad &\hbox{otherwise in} \ W^{1,p}(\om,\mathbb R^3),
%\end{array}\right.
%\end{equation*}
%where $\mathcal L_{\mathbf R}: W^{1,p}(\om,\mathbb R^3)\to\mathbb R$ is defined as
%\begin{equation}\label{elleerre}
%\mathcal L_{\mathbf R}(\v):=\mathcal L(\mathbf R\v)=\int_\om\mathbf R^T\mathbf f\cdot\v\,d\x-\int_{\partial\om}\mathbf R^T\mathbf f\cdot\v\,d\mathcal H^2(\x).
%\end{equation}
%% with $\mathcal G_{\mathbf R}(\v)=+\infty$ if $\v\in W^{1,p}(\om,\mathbb R^3)\setminus H^1(\om,\mathbb R^3)$.
%Suppose that $\u$ minimizes $\mathcal G$ over $W^{1,p}(\om,\R^3)$, and that $\mathbf R\in\mathcal S_{\mathcal L}^0$ realizes the associated maximum in the definition of $\mathcal G(\u)$. Then, $\mathbf \u$ is a minimizer of the functional $\mathcal G_{\mathbf R}$ over $W^{1,p}(\om,\R^3)$
%  and an associated optimal rotation, realizing the maximum in the definition of $\mathcal G_{\mathbf R}(\u)$, is given by the identity matrix.
%  \end{proposition}

  \begin{proofad4}
  We give the proof for the compressible case, the arguments for the incompressible case being the very same.
  
  By Remark \ref{29}, $\mathcal L_{\mathbf R}$ satisfies   \eqref{globalequi} and \eqref{comp}. If $\mathbf R\in\mathcal S_{\mathcal L}^0$ we notice that, since $\mathcal S_{\mathcal L}^0$ and $\mathcal S_{\mathcal L_{\mathbf R}}^0$ are subgroups of $SO(3)$ as shown in Remark \ref{subgroup},  inequality \eqref{eqi} is an equality as soon as $\mathbf S\in\mathcal S^0_{\mathcal L}$, so that  $\mathcal S_{\mathcal L_{\mathbf R}}^0\supseteq\mathcal S^0_{\mathcal L}$. Still assuming $\mathbf R\in\mathcal S_{\mathcal L}^0$, we may also prove the opposite inclusion: indeed, by the inclusion $\mathcal S_{\mathcal L_{\mathbf R}}^0\supseteq\mathcal S^0_{\mathcal L}$ we deduce that $\mathbf R\in\mathcal S^0_{\mathcal L_{\mathbf R}}$, so that again Remark \ref{subgroup} implies $\mathbf R^T\in\mathcal S^0_{\mathcal L_{\mathbf R}}$ and $\mathbf R^T\mathbf S\in\mathcal S^0_{\mathcal L_{\mathbf R}}$ for every $\mathbf S\in\mathcal S^0_{\mathcal L_{\mathbf R}}$, thus
  \[
  \mathcal L((\mathbf S-\mathbf I)\x)=\mathcal L_{\mathbf R}(\mathbf R^T(\mathbf S-\mathbf I)\x)=\mathcal L_{\mathbf R}((\mathbf R^T\mathbf S-\mathbf I)\x)-\mathcal L_{\mathbf R}((\mathbf R^T-\mathbf I)\x)=0
  \]
  for every $\mathbf S\in\mathcal S^0_{\mathcal L_{\mathbf R}}$, proving that $\mathcal S_{\mathcal L_{\mathbf R}}^0\subseteq\mathcal S^0_{\mathcal L}$.
%  \[
%  \mathcal L_{\mathbf R}((\mathbf S-\mathbf I)\x)=\mathcal L((\mathbf R\mathbf S-\mathbf I)\x)-\mathcal L((\mathbf R-\mathbf I)\x)=\mathcal L((\mathbf R\mathbf S-\mathbf I)
%  \]
%  
%    that for  any
%  
%   $\bar{\mathbf R}\in SO(3)$ there holds
%  \[
%\mathcal L_{\mathbf R}((\bar{\mathbf R}-\mathbf I)\x)=\mathcal L(\mathbf R(\bar{\mathbf R}-\mathbf I)\x)=\mathcal L((\mathbf R\bar{\mathbf R}-\mathbf I)\x),
%\]  
%so that $\bar{\mathbf R}\in\mathcal S_{\mathcal L_{\mathbf R}}^0\iff\mathbf R\bar{\mathbf R}\in\mathcal S_{\mathcal L}^0$.
%

Let now $\mathbf R\in\mathcal S_{\mathcal L}^0$ realize the maximum in the definition of $\mathcal G(\u)$, where $\u$ minimizes $\mathcal G$ over $W^{1,p}(\om,\R^3)$.
We conclude by checking that $\u$ is also a minimizer of $\mathcal G_{\mathbf R}$ over $W^{1,p}(\om,\R^3)$ and that the identity matrix realizes the maximum in the definition of $\mathcal G_{\mathbf R}(\u)$.
   We proceed by contradiction,
    supposing that there are $\tilde \u\in W^{1,p}(\om,\mathbb R^3)$ and  $\tilde{\mathbf R}\in\mathcal S_{\mathcal L_{\mathbf R}}^0$ such that
\[\begin{aligned}\min_{W^{1,p}(\om,\mathbb R^3)}\mathcal G_{\mathbf R}&=
\int_\om\mathcal Q(\x,\mathbb E(\tilde\u))\,d\x-\mathcal L_{\mathbf R}(\tilde{\mathbf R}\tilde \u)
%\int_\om\mathbf R^T\mathbf f\cdot\tilde{\mathbf R}\tilde\u\,d\x-\int_{\partial\om}\mathbf R^T\mathbf g\cdot\tilde{\mathbf R}\tilde\u\,d\mathcal H^2(\x)
<
\int_\om\mathcal Q(\x,\mathbb E(\u))\,d\x-\mathcal L_{\mathbf R}(\u).
%-\int_\om\mathbf R^T\mathbf f\cdot\u\,d\x-\int_{\partial\om}\mathbf R^T\mathbf g\cdot\u\,d\mathcal H^2(\x).
\end{aligned}\]
 Then, having shown that $\mathbf R\tilde{\mathbf R}\in\mathcal S_{\mathcal L}^0$, we deduce
\[\begin{aligned}
 \mathcal G(\tilde \u)&=\int_\om\mathcal Q(\x,\mathbb E(\tilde \u))\,d\x-\max_{\mathbf S\in\mathcal S_{\mathcal L}^0}\mathcal L(\mathbf S\tilde\u)\le\int_\om\mathcal Q(\x,\mathbb E(\tilde \u))\,d\x-\mathcal L(\mathbf R\tilde{\mathbf R}\tilde\u)\\&=\int_\om\mathcal Q(\x,\mathbb E(\tilde \u))\,d\x-\mathcal L_{\mathbf R}(\tilde{\mathbf R}\tilde\u)<\int_\om\mathcal Q(\x,\mathbb E(\u))\,d\x-\mathcal L_{\mathbf R}(\u)
 \\&=
 \int_\om\mathcal Q(\x,\mathbb E(\u))\,d\x-\mathcal L(\mathbf R\u)
 =\mathcal G(\u),
%=\int_\om\mathcal Q(\x,\mathbb E(\tilde\u))\,d\x-\int_\om\mathbf R^T\mathbf f\cdot\tilde{\mathbf R}\tilde\u\,dx-\int_{\partial\om}\mathbf R^T\mathbf g\cdot\tilde{\mathbf R}\tilde\u\,d\mathcal H^2(\x)\\&
%<\int_\om\mathcal Q(\x,\mathbb E(\u))\,d\x-\int_\om\mathbf R^T\mathbf f\cdot\u\,dx-\int_{\partial\om}\mathbf R^T\mathbf g\cdot\u\,d\mathcal H^2(\x)\\
%&=\int_\om\mathcal Q(\x,\mathbb E(\u))\,d\x-\int_\om\mathbf f\cdot\mathbf R\u\,dx-\int_{\partial\om}\mathbf g\cdot\mathbf R\u\,d\mathcal H^2(\x)=\min_{W^{1,p}(\om,\mathbb R^3)}\mathcal G,
\end{aligned}\] %{\mathbf U}
which is a contradiction with the minimality of $\u$ for $\mathcal G$.
\end{proofad4}

\LLL\PPP
\KKK
\begin{remark}\label{maormora} \rm We close this section by mentioning a difference between our approach to Theorem \ref{mainth1comp} and the one   in \cite{MM},  where the authors introduce the set $ \mathcal R:=\argmax_{\mathbf R\in SO(3)} \mathcal L(\mathbf R \x)$. Under the usual assumptions on $\mathcal W$ (with $p=2$)  and assuming only \eqref{globalequi}, it follows from \cite[Theorem 5.3]{MM} that for every $\mathbf U\in\mathcal R$ the infimum of  
\beeq\lab{energiafinta}
\mathcal J_{h,\mathbf U}(\mathbf y): =h^{-2}\int_\om\mathcal W(\x,\nabla\mathbf y)\,d\x
-h^{-1}\mathcal L(\mathbf y-{\mathbf U} \x)
\eneq
among all $\u\in H^1(\om,\mathbb R^3)$ converges as $h\to 0$ to the minimum over $\u\in H^1(\om,\mathbb R^3)$ of
\begin{equation*}
\mathcal J_{\mathbf U}(\u)=\displaystyle \int_\om \mathcal Q(\x, \mathbb E(\u))\,d\x- \max_{\mathbf R\in \mathcal R} \mathcal L(\mathbf U\mathbf R\u).
\end{equation*}
It is readily seen that if $\mathcal L$ satisfies \eqref{globalequi}  and $ \mathbf I\in \mathcal R$ then $\mathcal L$ satisfies  \eqref{comp}, $ \mathcal R\equiv 
{\mathcal  S}^0_{\mathcal L}$ and $\mathcal J_{\mathbf I}(\u)=\mathcal G(\u)$ for every $\u\in H^1(\om,\mathbb R^3)$. 
Moreover
\begin{equation*}
\mathcal J_{h,\mathbf U}(\mathbf y)= \mathcal G_h(\mathbf y)+ h^{-1}\mathcal L({\mathbf U} \x-\x)\ge \mathcal G_h(\mathbf y),
\end{equation*}
and equality holds if and only if $\mathbf I\in \mathcal R$ so that in this case $\inf \mathcal G_h\to \min \mathcal G=\min\mathcal J_{\mathbf I}$. Therefore the results of \cite{MM} imply Theorem \eqref{mainth1comp} when $p=2$.
 On the other hand, if $\mathbf I\not\in \mathcal R$ (so that \eqref{comp} does not hold) then $\inf \mathcal G_h\to -\infty$ as $h\to 0$ (see Remark 2.5) and therefore hypothesis \eqref{comp} cannot be dropped in Theorem \eqref{mainth1comp}.

In addition, we claim that  whenever the sole condition \eqref{globalequi} holds along with the usual assumptions on $\mathcal W$ with $p\in (1,2]$, Theorem \eqref{mainth1comp} implies that for every $\mathbf U\in \mathcal R$ there holds $\inf \mathcal J_{h,\mathbf U}\to \min \mathcal J$ as $h\to 0$, where
\begin{equation*}
\mathcal J(\u)=\displaystyle \int_\om \mathcal Q(\x, \mathbb E(\u))\,d\x- \max_{\mathbf R\in \mathcal R} \mathcal L(\mathbf R\u).
\end{equation*}
Indeed, we first notice that if $\mathcal L$ satisfies \eqref{globalequi}, then ${\mathbf U}\in\mathcal R$ implies that $\mathcal L_{{\mathbf U}}$ satisfies both \eqref{globalequi} and \eqref{comp} and that 
\beeq
\lab{RsegnatoR}\mathcal R=\{{\mathbf U}\mathbf R:\mathbf R\in\mathcal S_{\mathcal L_{{\mathbf U}}}^0\}.
\eneq
In fact, given ${\mathbf U}\in\mathcal R$, if $\mathbf S\in \mathcal S_{\mathcal L_{{\mathbf U}}}^0$ it is immediately seen that ${\mathbf U}\mathbf S\in\mathcal R$, and given any $\mathbf S_*\in\mathcal R$ we may write $\mathbf S_*=\mathbf U\,{\mathbf U}^T\mathbf S_*$ and it is immediately seen that ${\mathbf U}^T\mathbf S_*\in\mathcal S^0_{\mathcal L_{{\mathbf U}}}$, thus proving \eqref{RsegnatoR}. Moreover we notice that
%Let us assume now that  \eqref{globalequi} holds and we notice that
\begin{equation*}\lab{energiaequiv}
\mathcal J_{h,\mathbf U}(\mathbf y)= \mathcal G_{h,\mathbf U}(\mathbf U^T\mathbf y)=h^{-2}\int_\om\mathcal W(\x,\nabla(\mathbf U^T\mathbf y))\,d\x
-h^{-1}\mathcal L_{\mathbf U}(\mathbf U^T\mathbf y- \x),
\end{equation*}
thus $\inf\mathcal J_{h,\mathbf U}=\inf \mathcal G_{h,\mathbf U}$, and by recalling that $\mathcal L_{{\mathbf U}}$ satisfies both \eqref{globalequi} and \eqref{comp}, it follows from Theorem \ref{mainth1comp} that $\inf\mathcal J_{h,\mathbf U}$  converges, as $h\to 0$, to the minimum of
\begin{equation*}
{\mathcal G}_{\mathbf U}(\u):=\displaystyle \int_\om \mathcal Q(\x, \mathbb E(\u))\,d\x-\max_{\mathbf R\in \mathcal S_{\mathcal L_{{\mathbf U}}}^0} \mathcal L_{{\mathbf U}}(\mathbf R\u)
\end{equation*}
among all $\u\in H^1(\om,\mathbb R^3)$. By recalling \eqref{elleerre} and by exploiting \eqref{RsegnatoR} we also get 
\begin{equation*}
\max_{\mathbf R\in \mathcal S_{\mathcal L_{{\mathbf U}}}^0} \mathcal L_{{\mathbf U}}(\mathbf R\u)=\max_{\mathbf R\in \mathcal S_{\mathcal L_{{\mathbf U}}}^0} \mathcal L({\mathbf U}\mathbf R\u)=\max_{\mathbf R\in \mathcal R} \mathcal L(\mathbf R\u),
\end{equation*}
that is, ${\mathcal G}_{\mathbf U}(\u)\equiv \mathcal J(\u)$ and $\inf \mathcal J_{h,\mathbf U}\to \min \mathcal J$ thus proving the claim (i.e., $\min \mathcal J=\min \mathcal J_{\mathbf U}$ for every $\mathbf U\in\mathcal R$). 

%Very briefly, the convergence of $\inf \mathcal J_{h,\mathbf U}$ and  $\inf \mathcal G_h$ are in some sense mathematically equivalent but it's hardly necessary to notice that, 
In any case, we observe that
if $\om$ is the reference configuration of the elastic body, the second term on the right hand side of \eqref{energiafinta} represents the work expended by the given external forces $\mathbf f ,\mathbf g$ if and only  $\mathbf I\in\mathcal R$
or equivalently if and only if \eqref{comp} is satisfied by $\mathbf f,\mathbf g$. 
%so, if condition \eqref{comp} does not hold,  for every choice of $\mathbf U$ the functionals $\mathcal J_{h,\mathbf U}$ do not represent the scaled total potential energy of the elastic body.
%so that  the results of \cite{MM} are comparable with Theorem \ref{mainth1comp} if and only if \eqref{comp} holds.\\
%Indeed in \cite{MM} $\mathcal L$ doesn't necessarily satisfy \eqref{comp} and,  by taking $\overline{\mathbf R}\in \argmax_{SO(3)} \mathcal L(\mathbf R x)$, $\mathbf f, \mathbf g$ are replaced with $\overline{\mathbf R}^T\mathbf f, \overline{\mathbf R}^T\mathbf g$ thus obtaining the functional defined in \eqref{energiafinta} which is not equivalent to 
%$\mathcal G_h$ unless $\overline{\mathbf R}\in \mathcal S^0_{\mathcal L}$ since  the second term on the right hand side of \eqref{energiafinta} represents the work expended by $\overline{\mathbf R}^T\mathbf f, \overline{\mathbf R}^T\mathbf g$, not the the work expended by $\mathbf f, \mathbf g$. Moreover it is worth noticing that $\overline{\mathbf R}^T\mathbf f, \overline{\mathbf R}^T\mathbf g$ satisfy \eqref{comp} so that this  condition cannot be circumvented by only assuming $\overline{\mathbf R}\in \mathcal R$.
\end{remark}

\KKK

 \section{Preliminary results}\label{prel}
 \subsection*{Some properties of $\mathcal W$} 
% Let assumptions \eqref{infty}, \eqref{framind}, \eqref{Z1}, \eqref{reg} and
%\eqref{coerc}  hold.
%  We recall that $\mathcal W$ is defined by $\mathcal W(x,\mathbf F):=\mathcal W^I(x,(\det\mathbf F)^{-1/3}\mathbf F)$, thus $\W^I\ge \W$.
%  It is clear that $\mathcal W$ itself satifies  
% \eqref{framind}  and \eqref{Z1}, so that by \eqref{reg}, since  $\mathcal W\ge 0$, we deduce 
% \begin{equation}\label{smooth0}\W(x,\mathbf R)\!=\!0,\ D \W(x,\mathbf R)\!=\!0 \quad \forall \mathbf R\in SO(3),\quad \mbox{for a.e. $x\in\om$}.\end{equation}
The frame indifference assumption \eqref{framind} implies that there exists a function $\mathcal V$ such that  for a.e. $\x\in \om$
\begin{equation}\label{vi}
\W(\x,\mathbf F)=\V(\x,\textstyle{\frac{1}{2}}( \mathbf F^T \mathbf F - \mathbf I))\,
\quad
\ \forall\, \mathbf F\in \mathbb R^{3\times 3}.
\end{equation}
 By  \eqref{reg}, for a.e. $\x\in\Omega$, we have
 $
 \mathcal W(\x,\mathbf R)=D\mathcal W(\x,\mathbf R)=0$ for any $\mathbf R\in SO(3).
 $
 By \eqref{vi}, for a.e. $\x\in\om$,
given $\mathbf B\in\mathbb R^{3\times 3}$ and $h> 0$ we have
 $
%\mathcal V_{h}(x,\mathbf B):= 
\mathcal W(\x,\Id+h\mathbf B)=\V(\x,h\,{\rm sym}\mathbf B+\tfrac12h^{2}\mathbf B^{T}\mathbf B)
$
 and \eqref{reg} again implies
\[\displaystyle %\lim_{h\to 0}\mathcal V_{h}(x,\mathbf B) =
\lim_{h\to 0} h^{-2}\mathcal W(\x,\mathbf I+h\mathbf B)=
\frac{1}{2} \,{\rm sym} \mathbf  B\, D^2\V (\x, \mathbf 0) \ {\rm sym}\mathbf  B=\frac12\, \mathbf B^T D^2\mathcal W(\x,\mathbf I)\,\mathbf B,
\qquad\forall \,\mathbf B\in\mathbb R^{3\times3}.
%=: \mathcal V_0(x,{\rm sym}\mathbf  B),
\]
By the latter and by \eqref{coerc}, for a.e. $\x\in \om$, the following holds for every $\mathbf B\in\mathbb R^{3\times3}$ with $\det\mathbf B>0$:
\begin{equation*}\label{ellipticity}\begin{aligned}
\frac12\, \mathbf B^T D^2\mathcal W(\x,\mathbf I)\,\mathbf B&=\lim_{h\to 0} h^{-2}\mathcal W(\x,\mathbf I+h\mathbf B)
%=\lim_{h\to0} h^{-2}\mathcal W(x,\exp(h\mathbf B))\\&=\lim_{h\to0} h^{-2}\mathcal W^I(x,\exp(h\mathbf B))
\ge \limsup_{h\to 0}Ch^{-2}\,d^2(\mathbf I+h\mathbf B,SO(3))\\&=\limsup_{h\to 0} C h^{-2}\left|\sqrt{(\mathbf I+h\mathbf B)^T(\mathbf I+h\mathbf B)}-\mathbf I\right|^2%\\&= \limsup_{h\to 0} Ch^{-2}|\exp(h\,\mathrm{sym}\mathbf B)-\mathbf I|^2
=C|\mathrm{sym}\mathbf B|^2.\end{aligned}
\end{equation*}
%
%\begin{equation}
%\frac12\mathrm{sym}\mathbf B\,D^2\mathcal W(x,\mathbf I)\,\mathrm{sym}\mathbf B
%	%\mathcal V_0 (\xx,\hbox{\rm sym}\,\mathbf B)
%	\, \geq \, \frac{C}{4} \, |\hbox{\rm sym}\,\mathbf  B|^2
%	\quad \forall \  \mathbf  B\in \mathbb R^{3 \times 3}.
%	%\,,
%\ \hbox{ for a.e. }\,x\in\Omega.%,\ \mathbf B\in \M^{3\times 3}\,.
%\end{equation}
 Moreover, as noticed also in \cite{MP2}, by expressing the remainder of Taylor's expansion in terms of the $\x$-independent modulus of continuity $\omega$ of $D^2\W(\x,\cdot)$ on the set $\mathcal U$  \KKK from \eqref{reg}, we have 
 %$|D^2\mathcal W(x,\mathbf F)-\D^2\mathcal W(x,\mathbf G)|\le\omega(|\mathbf F-\mathbf G|)$
\begin{equation}\lab{regW}
\left|\mathcal W(\x, \mathbf I+h\mathbf B)- \frac{h^2}{2} \,{\rm sym} \mathbf  B\, D^2\mathcal W (\x, \mathbf I) \ {\rm sym}\mathbf  B\right|\le h^2\omega(h|\mathbf B|)|\mathbf B|^2
\end{equation}
for any small enough $h$ (such that $h\mathbf B\in\mathcal U$). Similarly, $\mathcal V(\x,\cdot)$ is $C^2$ in a neighbor of the origin in $\mathbb R^{3\times 3}$, with an $\x$-independent modulus of continuity $\eta:\mathbb R_+\to \mathbb R$, which is increasing and such that $\lim_{t\to0^+}\eta(t)=0$, and  we have
\begin{equation}\lab{regV}
\left|\mathcal V(\x, h\mathbf B)- \frac{h^2}{2} \,{\rm sym} \mathbf  B\, D^2\V (\x, \mathbf 0) \ {\rm sym}\mathbf  B\right|\le h^2\eta(h|\mathbf B|)|\mathbf B|^2
\end{equation}
for any small enough $h$.
%where $\eta$ is an increasing function such that $\eta(t)\to 0$ as $t\to 0^+$ and
%\begin{equation*}
%%\mathcal V_{h}(x,\mathbf B):= 
%h^{-2}\mathcal W(x,\Id+h\mathbf B)=h^{-2}\V(x,h\,{\rm sym}\mathbf B+h^{2}\mathbf B^{T}\mathbf B),
%\end{equation*}
%where $\mathrm{sym}\mathbf B:=\frac12(\mathbf B^T+\mathbf B)$. 

 \subsection*{Some functional inequalities}
 % Here and for the rest of this section,  $\Omega$ is a bounded connected set with Lipschitz boundary.
  Let $p\in(1,2]$. Since $\Omega$ is a bounded open connected Lipschitz set,
 by Sobolev embedding, Sobolev trace embedding and by the Poincar\'e inequality  for any $\v\in W^{1,p}(\Omega,\mathbb R^3)$ there exists $\bar{\mathbf c}, \bar{\mathbf d}\in\mathbb R^3$ such that
 \begin{equation}\label{fried}
 \|\v-\bar{\mathbf c}\|_{L^{\frac{3p}{3-p}}(\Omega,\mathbb R^3)}+\|\v-\bar{\mathbf d}\|_{L^{\frac{2p}{3-p}}(\partial\Omega,\mathbb R^3)}\le K\|\nabla \v\|_{L^p(\Omega,\mathbb R^{3\times3})},
 \end{equation} 
 where $K$ is a constant only depending on $\Omega,p$.  Moreover, the second Korn inequality (see for instance \cite{N}), 
% entails the
%  existence of a constant $Q_{K}=Q_K(\om,p)$ such that
%  \begin{equation}\label{secondkorn}
%  \|\nabla\v-\mathbf W_\v\|_{L^p(\Omega)}\le Q_K \|\mathbb E(\v)\|_{L^p(\Omega)}.
%  \end{equation}
   combined with  Sobolev and trace inequalities, provides the existence of a further constant $C_K=C_K(\om,p)$ such that for all $\v\in W^{1,p}(\om,\R^3)$
\begin{equation}\label{kornvera}
%\int_{\om}|\vv-\mathbb P\vv|^{2}\,d\xx\le C_{K}\int_{\om}|\mathbb E(\vv)|^{2}\,d\xx.
\|\vv-\mathbb P\vv\|_{L^{\frac{3p}{3-p}}(\Omega,\R^3)}\,+\,
\|\vv-\mathbb P\vv\|_{L^{\frac{2p}{3-p}}(\partial\Omega,\R^3)}
\,\le\ C_{K}\ \|\mathbb E(\vv)\|_{L^p(\Omega,\mathbb R^{3 \times 3})},
\end{equation}
where $\mathbb P$ denotes the projection operator on infinitesimal rigid displacements, i.e., on the set of displacement fields $\v$ such that $\mathbb E(\v)=0$.

A useful consequence of \eqref{fried}, if  \eqref{globalequi} holds true, is the following estimate. Since for any $\v\in W^{1,p}(\Omega,\mathbb R^3)$  and for every  $\mathbf c, \mathbf d\in \mathbb R^3$  \begin{equation*}
|\mathcal L(\v)|\le \|\mathbf f\|_{L^{\frac{3p}{4p-3}}(\Omega,\mathbb R^3)}\|\v-\mathbf c\|_{L^{\frac{3p}{3-p}}(\Omega,\mathbb R^3)} + \|\mathbf g\|_{L^{\frac{2p}{3p-3}}(\partial\Omega,\mathbb R^3)}\|\v-\mathbf d\|_{L^{\frac{2p}{3-p}}(\partial\Omega,\mathbb R^3)} \\
%&\le C_{\mathcal L}\eps^{\frac{p}{p-1}}+\eps^p\|\nabla\v)\|_{L^p(\Omega,\R^{3\times3})},
\end{equation*}
then \eqref{fried} implies
\begin{equation*}\label{bella}
|\mathcal L(\v)|\le C_{\mathcal L}\|\nabla\v\|_{L^p(\Omega,\R^{3\times3})}
\end{equation*}
where $C_{\mathcal L}:=K\left(\|\mathbf f\|_{L^{\frac{3p}{4p-3}}(\Omega,\mathbb R^3)}+ \|\mathbf g\|_{L^{\frac{2p}{3p-3}}(\partial\Omega,\mathbb R^3)}\right)$ and $K$ is the constant in \eqref{fried}. By Young inequality we then obtain
\begin{equation}\label{elle}
|\mathcal L(\v)|\le \frac{p-1}{p}(C_{\mathcal L}\eps^{-1})^{\frac{p}{p-1}}+p^{-1}\eps^p\|\nabla\v\|^{p}_{L^p(\Omega,\R^{3\times3})}
\end{equation}
for every $\eps > 0$.

%  \subsection*{Korn Inequality} Let $p\in(1,2]$. 
% For any $\v\in W^{1,p}(\Omega,\mathbb R^3)$, there is a unique couple $\mathbf W_\v\in\mathbb R^{3\times3}_{\mathrm{skew}}$, $\mathbf a_{\v}\in\mathbb R^3$ such that
% \[
% \|\v-\mathbb P\v\|_{L^p(\Omega)}=\min\{\|\v-(\mathbf W x+\mathbf a)\|_{L^p(\Omega)}: \mathbf W\in\mathbb R^{3\times3}_{\mathrm{skew}},\, \mathbf a\in\mathbb R^3\}=\|\v-(\mathbf W_\v\, x+\mathbf a_\v)\|_{L^p(\Omega)}
% \]
% and Korn inequality, see for instance \cite{N}, entails the
%  existence of a constant $Q_{K}=Q_K(\om,p)$ such that
%  \begin{equation}\label{secondkorn}
%  \|\nabla\v-\mathbf W_\v\|_{L^p(\Omega)}\le Q_K \|\mathbb E(\v)\|_{L^p(\Omega)}.
%  \end{equation}
%  Moreover, by combining the latter with Sobolev and trace inequalities, we obtain the existence of a further constant $C_K=C_K(\om,p)$ such that for all $\v\in W^{1,p}(\om,\R^3)$
%\begin{equation}\label{kornpoi}
%%\int_{\om}|\vv-\mathbb P\vv|^{2}\,d\xx\le C_{K}\int_{\om}|\mathbb E(\vv)|^{2}\,d\xx.
%\|\vv-\mathbb P\vv\|_{L^{\frac{3p}{3-p}}(\Omega,\R^3)}\,+\,
%\|\vv-\mathbb P\vv\|_{L^{\frac{2p}{3-p}}(\partial\Omega,\R^3)}
%\,\le\ C_{K}\ \|\mathbb E(\vv)\|_{L^p(\Omega,\mathbb R^{3 \times 3})}.
%\end{equation}

\section{Convergence of minimizers: proof of Theorem \ref{mainth1comp} and Theorem \ref{mainth1}}
\label{proofsection}

\subsection{The incompressible case} We give the proof of our convergence result regarding the incompressible case, which is the more difficult.   We will briefly show how to adapt the arguments to the compressible case later on. The proof follows the standard line of a $\Gamma$-convergence argument: we prove compactness, a lower bound and an upper bound. \KKK

\begin{lemma}\label{compactness}{\bf (Compactness)}.
Assume  \eqref{framind}, \eqref{Z1}, \eqref{reg}, \eqref{coerc}, 
   \eqref{globalequi} and \eqref{comp}.
 Let $(h_{j})_{j\subset\mathbb N}\subset(0,1)$ be a vanishing a sequence, let $M>0$   %be a decreasing sequence
 and let $(\mathbf y_{{j}})_{j\in\mathbb N}\subset W^{1,p}(\Omega,\R^3)$ be a sequence  such that \begin{equation}\lab{boundG}\mathcal G^I_{h_{j}}(\mathbf y_{j})\le M\qquad\forall \ j\in\mathbb N.\end{equation} %namely $\v_{{j}}$ is a minimizing sequence for $F_{h_{j}}$. 
 Let   $\mathbf R_j\in\mathcal A_p(\mathbf y_j)$ \KKK and $\u_j(\x):=h_j^{-1}(\mathbf R_j^T\mathbf y_j(\x)- \x)$.
  Then, the sequence $(\nabla \u_j)_{j\in\mathbb N}$ is bounded in $L^p(\om,\R^{3\times3})$ and any of its weak $L^p(\om,\R^{3\times3})$ limit points is of the form $\nabla\u_*$ for some $\u_*\in H^1(\om,\R^3)$. Moreover, any limit point of the sequence $(\mathbf R_j)_{j\in\mathbb N}\subset SO(3)$ belongs to $\mathcal S^0_{\mathcal L}$. \KKK
% Then there exist $\u_*\in H^1(\Omega,\R^3)$ and $\mathbf R_*\in \mathcal S^0_{\mathcal L}$ such that by  setting  $\mathbf R_j:=\mathbf R(\mathbf y_j)$  and $\u_j(x):=h_j^{-1}(\mathbf R_j^T\mathbf y_j(x)- x)$ we have, up to subsequences, $\mathbf R_j\to \mathbf R_*$ and $\nabla\u_j\wconv \nabla\u_*$ weakly in $L^p(\Omega,\R^{3\times3})$.
\end{lemma}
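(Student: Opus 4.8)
\textbf{Proof strategy for Lemma \ref{compactness}.}

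The plan is to extract quantitative information from the energy bound \eqref{boundG} by combining the coercivity assumption \eqref{coerc} with the rigidity estimate \eqref{muller}. First I would observe that, by frame indifference \eqref{framind} and the definition $\u_j = h_j^{-1}(\mathbf R_j^T\mathbf y_j - \x)$, one has $\nabla\mathbf y_j = \mathbf R_j(\mathbf I + h_j\nabla\u_j)$, so $|\nabla\mathbf y_j - \mathbf R_j| = h_j|\nabla\u_j|$ and $\mathcal W^I(\x,\nabla\mathbf y_j) = \mathcal W^I(\x,\mathbf I+h_j\nabla\u_j)$ (with the incompressibility constraint passing to $\det(\mathbf I+h_j\nabla\u_j)=1$). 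The bound \eqref{boundG}, the coercivity \eqref{coerc}, the rigidity inequality \eqref{muller} (note $\mathbf R_j\in\mathcal A_p(\mathbf y_j)$), and the elementary inequality \eqref{propgp} together should give
\[
h_j^{-2}\int_\Omega g_p(h_j|\nabla\u_j|)\dx \ \ge\ \int_\Omega |\nabla\u_j|^p\dx - \frac{2-p}{p}|\Omega|,
\]
while from \eqref{boundG} we get $h_j^{-2}\int_\Omega\mathcal W^I(\x,\nabla\mathbf y_j)\dx \le M + h_j^{-1}\mathcal L(\mathbf y_j-\mathbf i)$. The term $h_j^{-1}\mathcal L(\mathbf y_j-\mathbf i)$ must then be controlled: writing $\mathbf y_j - \mathbf i = \mathbf y_j - \mathbf R_j\x + (\mathbf R_j-\mathbf I)\x = \mathbf R_j(h_j\u_j) + (\mathbf R_j-\mathbf I)\x$ and using $\mathcal L(\mathbf R_j(h_j\u_j)) = h_j\mathcal L(\mathbf R_j\u_j)$ together with \eqref{comp} which forces $\mathcal L((\mathbf R_j-\mathbf I)\x)\le 0$, we obtain $h_j^{-1}\mathcal L(\mathbf y_j-\mathbf i) \le \mathcal L(\mathbf R_j\u_j) \le |\mathcal L(\mathbf R_j\u_j)|$. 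Since $|\mathbf R_j|$ is bounded, the bound \eqref{elle} applied to $\mathbf R_j\u_j$ gives $|\mathcal L(\mathbf R_j\u_j)| \le C_\eps + \eps^p\|\nabla\u_j\|_{L^p}^p$ for every $\eps>0$.

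Putting these estimates together and absorbing the $\eps^p\|\nabla\u_j\|_{L^p}^p$ term into the left-hand side by choosing $\eps$ small relative to the coercivity constant $C$ (and the constant $C_p$ from \eqref{muller}), I expect to obtain a uniform bound $\|\nabla\u_j\|_{L^p(\Omega,\R^{3\times3})}\le C$. This is the $L^p$ boundedness claim. Then, by reflexivity of $L^p$ for $p>1$, up to a subsequence $\nabla\u_j\weak \mathbf G$ weakly in $L^p(\Omega,\R^{3\times3})$ for some $\mathbf G$; since the $\u_j$ themselves (after subtracting the projection $\mathbb P\u_j$ onto infinitesimal rigid displacements, which does not change the gradient's curl-free structure) are bounded in $W^{1,p}$ by the Korn-type inequality \eqref{kornvera}, the limit $\mathbf G$ is the gradient of some $\u_*\in W^{1,p}(\Omega,\R^3)$. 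The improvement to $\u_*\in H^1$ requires more care: one should split $\Omega$ into the region where $|\nabla\u_j|\le h_j^{-1}$, on which $g_p$ behaves quadratically so that $\mathrm{sym}\,\nabla\u_j$ is bounded in $L^2$, and the exceptional region $\{|\nabla\u_j|>h_j^{-1}\}$, whose measure vanishes; a lower semicontinuity argument then yields $\mathbb E(\u_*)\in L^2$, and by Korn again $\u_*\in H^1(\Omega,\R^3)$.

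For the last assertion, let $\mathbf R_*$ be a limit point of $(\mathbf R_j)\subset SO(3)$ (which is compact, so limit points exist). From the energy bound one has $h_j^{-1}\mathcal L((\mathbf R_j-\mathbf I)\x) = h_j^{-1}\mathcal L(\mathbf y_j-\mathbf i) - \mathcal L(\mathbf R_j\u_j) \ge -M'$ for some constant $M'$ independent of $j$ (using that $h_j^{-1}\int\mathcal W^I\ge 0$ and the bound on $|\mathcal L(\mathbf R_j\u_j)|$ just established, now that $\|\nabla\u_j\|_{L^p}$ is controlled). Hence $\mathcal L((\mathbf R_j-\mathbf I)\x)\ge -M'h_j \to 0$. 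Combined with \eqref{comp}, which gives $\mathcal L((\mathbf R_j-\mathbf I)\x)\le 0$, we conclude $\mathcal L((\mathbf R_j-\mathbf I)\x)\to 0$, and passing to the limit along the subsequence converging to $\mathbf R_*$ yields $\mathcal L((\mathbf R_*-\mathbf I)\x)=0$, i.e.\ $\mathbf R_*\in\mathcal S^0_{\mathcal L}$. I expect the main obstacle to be the careful bookkeeping in the absorption argument (tracking how the constant $C_p$ from rigidity interacts with the coercivity constant and ensuring $\eps$ can indeed be chosen small enough uniformly in $j$) and the passage from the $L^p$ bound to $H^1$ regularity of the limit via the truncation of the bad set $\{|\nabla\u_j|>h_j^{-1}\}$.
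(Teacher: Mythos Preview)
Your strategy is correct and matches the paper's proof almost step by step: the decomposition $\mathbf y_j-\mathbf i=\mathbf R_j(h_j\u_j)+(\mathbf R_j-\mathbf I)\x$, the use of \eqref{comp} to drop the second term, the absorption argument via \eqref{elle} combined with \eqref{coerc}--\eqref{muller}--\eqref{propgp}, and the argument that $\mathbf R_*\in\mathcal S^0_{\mathcal L}$ are all exactly what the paper does.

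The only place where the paper is more direct is the $H^1$ regularity of the limit. You propose to extract an $L^2$ bound only on $\mathrm{sym}\,\nabla\u_j$ over the good set, deduce $\mathbb E(\u_*)\in L^2$, and then upgrade to $\u_*\in H^1$ via Korn. This works, but it is a detour: on the good set the \emph{full} gradient $\nabla\u_j$ (not just its symmetric part) is $L^2$-bounded, since $g_p$ is quadratic there. The paper uses this directly: with $B_j:=\{\sqrt{h_j}\,|\nabla\u_j|\le 1\}$ (your threshold $h_j^{-1}$ would also work for this lemma), one has $\mathbf 1_{B_j}\nabla\u_j$ bounded in $L^2$, while $\mathbf 1_{B_j^c}\nabla\u_j\to 0$ strongly in every $L^q$ with $q<p$ by H\"older (since $|B_j^c|\to 0$ and $\|\nabla\u_j\|_{L^p}$ is bounded). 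Hence the weak $L^2$ limit of $\mathbf 1_{B_j}\nabla\u_j$ coincides with the weak $L^p$ limit of $\nabla\u_j$, namely $\nabla\u_*$, and one gets $\nabla\u_*\in L^2$ immediately---no Korn needed. Note that your phrase ``lower semicontinuity'' is a slight misnomer: the actual content of this step is the identification of the weak $L^2$ limit of the truncated sequence with $\nabla\u_*$, and that identification is required whether you track the full gradient or only its symmetric part. Finally, Poincar\'e (rather than \eqref{kornvera}) already suffices to see that any weak $L^p$ limit of $\nabla\u_j$ is a gradient.
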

\begin{proof} 
 By \eqref{boundG} we obtain $\det\nabla\mathbf y_j=1$ for any $j\in\mathbb N$. \KKK
By \eqref{boundG}, \eqref{globalequi}, \eqref{comp} and \eqref{elle} we get for every $\eps > 0$
\begin{equation}\lab{bdw}\begin{aligned}
\displaystyle h_j^{-2}\int_\om\mathcal W^I(\x,\nabla\mathbf y_j)\,d\x&=h_j^{-2}\int_\om\mathcal W^I(\x,\nabla\mathbf y_j)\,d\x\le M+
h_j^{-1}\mathcal L(\mathbf y_j-\x)\\
&= M+h_j^{-1}\mathcal L(\mathbf y_j-\mathbf R_j\x)+\displaystyle h_j^{-1}\mathcal L((\mathbf R_j-\mathbf I)\x)\\
&\displaystyle \le M+h_j^{-1}\mathcal L(\mathbf y_j-\mathbf R_j\x)\\&\le  \frac{p-1}{p}(C_{\mathcal L}\eps^{-1})^{\frac{p}{p-1}}+p^{-1}\eps^p\|\mathbf R_j\nabla\u_j\|^{p}_{L^p(\Omega,\R^{3\times3})}.
\end{aligned}
\end{equation}
Moreover, by recalling \eqref{coerc}, \eqref{muller} and \eqref{propgp}, \KKK there exists a constant $C$ (only depending on $p$ and $\Omega$) such that for every $\eps>0$
\begin{equation}\label{odds}\begin{aligned}
&h_j^{-2}\int_\Omega\mathcal W^I(\x,\nabla \mathbf y_j)\,d\x=h_j^{-2}\int_\om\mathcal W^I(\x,\nabla\mathbf y_j)\,d\x\ge Ch_j^{-2}\int_\Omega g_p(|\nabla\mathbf y_j-\mathbf R_j|)\,d\x\\&\qquad=
Ch_j^{-2}\int_\Omega g_p(h_j |\mathbf R_j\nabla \mathbf y_j|)\,d\x \ge C \|\mathbf R_j\nabla \mathbf u_j\|^p_{L^p(\Omega,\R^{3\times3})}-\frac{2-p}{p}\,C\,|\Omega|,
\end{aligned}\end{equation}
which in combination with \eqref{bdw} entails, by taking small enough $\eps$,
\begin{equation}\label{bdu}
\|\mathbf R_j\nabla\u_j\|^{p}_{L^p(\Omega,\R^{3\times3})}=\|\nabla\u_j\|^{p}_{L^p(\Omega,\R^{3\times3})}\le Q
%\|\nabla\u_j)\|^{p}_{L^p(\Omega,\R^{3\times3})}\le  \frac{p-1}{p}(C_{\mathcal L}\eps^{-1})^{\frac{p}{p-1}}+p^{-1}\eps^p\|\nabla\u_j)\|^{p}_{L^p(\Omega,\R^{3\times3})} +|\om|(1-\frac{1}{p})
\end{equation}
for some suitable constant $Q$ depending only on $C_{\mathcal L}, p,\Omega$ (and not on $j$).
%and by choosing $\eps$ small enough we get $\|\nabla\u_j)\|^{p}_{L^p(\Omega,\R^{3\times3})}\le C$ for some $C> 0$:
 On the other hand 
\begin{equation*}
\displaystyle \mathcal G_h^{I}(\mathbf y_j):=h_j^{-2}\int_\om\mathcal W^I(\x,\nabla\mathbf y_j)\,d\x
-\mathcal L(\mathbf R_j\u_j)-h_j^{-1}\mathcal L((\mathbf R_j-\mathbf I)\x)\le M
\end{equation*}
entails
\begin{equation}\lab{esse0}
0\le -h_j^{-1}\mathcal L((\mathbf R_j-\mathbf I)\x)\le M+\mathcal L(\mathbf R_j\u_j)
\end{equation}
and by \eqref{elle}, \eqref{bdu} we get $\mathcal L((\mathbf R_j-\mathbf I)\x)\to 0$ as $j\to+\infty$. Therefore, if  $\mathbf R_j\to \mathbf R_*$ along a suitable subsequence, we have $\mathcal L((\mathbf R_*-\mathbf I)\x)= 0$ that is $\mathbf R_*\in \mathcal S^0_{\mathcal L}$. 

By \eqref{bdu}, the sequence $(\nabla \u_j)_{j\in\mathbb N}$ is bounded in $L^p(\om,\R^{3\times3})$. As a consequence of the Poincar\'e inequality, any of its weak $L^p(\Omega,\R^{3\times3})$ limit points is of the form $\nabla\u_*$ for some  $\u_*\in W^{1,p}(\Omega,\R^3)$.  Assuming that $\nabla\u_*$ is the weak $L^p(\Omega,\R^{3\times3})$ limit point along a not relabeled  subsequences, 
%$\nabla\u_j\wconv \nabla\u_*$ weakly in $L^p(\Omega,\R^{3\times3})$.
we are only left to prove that $\u_*\in H^1(\om;\mathbb R^3)$. To this aim we let
\begin{equation}\label{BJ}
B_j:=\{\x\in \om: \sqrt h_j|\nabla\u_j|\le 1\}
\end{equation}
so that 
\begin{equation*}
\int_{B_j}|\nabla\u_j|^2\,d\x\le h_j^{-2}\int_\om g_p(h_j|\nabla\u_j|)\,d\x=h_j^{-2}\int_\Omega g_p(|\nabla \mathbf y_j-\mathbf R_j|)\,d\x,
\end{equation*}
hence by \eqref{bdw}, \eqref{odds} and \eqref{bdu} we get uniform boundedness in $L^2(\om,\mathbb R^{3\times 3})$ for the sequence $(\mathbf 1_{B_j}\nabla \mathbf u_j)_{j\in\mathbb N}$
%\begin{equation*}
%\int_{B_j}|\nabla\u_j|^2\,dx\le C
%\end{equation*}
%for some $C> 0$ that is, 
thus up to subsequences $\mathbf 1_{B_j}\nabla\u_j\wconv \w$ weakly in $L^2(\om,\mathbb R^{3\times 3})$ as $j\to+\infty$.
On the other hand for every $q\in (1,p)$ we have
\begin{equation*}
\displaystyle\int_{B_j^c}|\nabla\u_j|^q\,d\x\le \left(\int_{B_j^c}|\nabla\u_j|^p\,d\x\right)^{q/p}|B_j^c|^{(p-q)/p}
\end{equation*}
where the right hand side vanishes as $j\to+\infty$ 
since $|B_j^c|\to 0$ by Chebyshev inequality. 
By taking into account that
\begin{equation*}
\nabla \u_j={\mathbf 1}_{B_j^c}\nabla\u_j+{\mathbf 1}_{B_j}\nabla\u_j
\end{equation*}
and  that ${\mathbf 1}_{B_j}\nabla\u_j\wconv \w$ weakly in $L^2(\om,\R^{3\times3})$ we get $\nabla\u_j\wconv \w$
weakly in $L^q(\om,\R^{3\times3})$ and recalling that $\nabla\u_j\wconv \nabla\u_*$ weakly in $L^p(\om,\mathbb R^{3\times3})$ we get $\w=\nabla\u_*\in L^2(\om,\R^{3\times3})$ thus proving that $\u_*\in H^1(\om,\R^3)$.  
\end{proof}

%
%

%\section{Lower bound}\label{Sectionpotential}
%\section{Preliminary results}\label{sectionprelresults}

%Throughout  this section let $(h_j)_{j\in\mathbb N}\subset(0,1)$ be a vanishing sequence, let $(\mathbf y_j)_{j\in\mathbb N}\subset W^{1,p}(\Omega,\mathbb R^3),$ $\mathbf R(\mathbf y_j)$ as in \eqref{roty}  and set
%\begin{equation}\lab{uj}
% \mathbf R_j:=\mathbf R(\mathbf y_j);\ \ \  \u_j(x):=h_j\mathbf R_j^T(\mathbf y_j(x)-\mathbf R_jx).
%\end{equation}
 \begin{lemma}\label{lowerbd}{\bf (Lower bound)}. Assume \eqref{globalequi}, \eqref{comp},  \eqref{framind}, \eqref{Z1}, \eqref{reg},    
\eqref{coerc}.
 Let $(\mathbf y_j)_{j\in\mathbb N}\subset W^{1,p}(\om,\R^3)$ be a sequence. For any $j\in\mathbb N$, let  $\mathbf R_j\in\mathcal A_p(\mathbf y_j)$ \KKK and  $\u_j(\x):=h_j^{-1}(\mathbf R_j^T\mathbf y_j(\x)-\x)$. Suppose that there exists $\u\in W^{1,p}(\om,\R^3)$ such that \KKK
  $\nabla\u_j\wconv \nabla\u$ weakly in $L^p(\om, \mathbb R^3)$. Then
\begin{equation*}\lab{liminf}
\displaystyle\liminf_{j\to +\infty}\mathcal G_{h_j}^I(\mathbf y_j)\ge \mathcal G^I(\u).
\end{equation*}
\end{lemma}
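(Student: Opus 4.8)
The plan is to follow the standard lower-bound scheme of $\Gamma$-convergence, splitting the domain according to where $\nabla\u_j$ is ``small''. First I would recall from the compactness argument that we may assume $\mathcal G^I_{h_j}(\mathbf y_j)\le M$ for some $M$ (otherwise the liminf is $+\infty$ and there is nothing to prove), so that $\det\nabla\mathbf y_j=1$ a.e., the sequence $(\nabla\u_j)$ is bounded in $L^p$, and—up to a not relabeled subsequence realizing the liminf—$\mathbf R_j\to\mathbf R_*\in\mathcal S^0_{\mathcal L}$. Using frame indifference \eqref{framind} I rewrite
\[
\mathcal G^I_{h_j}(\mathbf y_j)=h_j^{-2}\int_\om\mathcal W^I(\x,\mathbf I+h_j\nabla\u_j)\,d\x-\mathcal L(\mathbf R_j\u_j)-h_j^{-1}\mathcal L((\mathbf R_j-\mathbf I)\x).
\]
The last term is nonnegative by \eqref{comp}, so it can be dropped in a lower bound; for the load term, since $\nabla\u_j\wconv\nabla\u$ in $L^p$ and $\mathbf R_j\to\mathbf R_*$, the Sobolev embedding and boundedness of $\mathcal L$ give $\mathcal L(\mathbf R_j\u_j)\to\mathcal L(\mathbf R_*\u)$ (after subtracting the appropriate constant, harmless by \eqref{globalequi}), and $\mathcal L(\mathbf R_*\u)=\mathcal L(\u)+\mathcal L((\mathbf R_*-\mathbf I)\u)\le\mathcal L(\u)+\max_{\mathbf R\in\mathcal S^0_{\mathcal L}}\mathcal L((\mathbf R-\mathbf I)\u)$, which is exactly the load part of $\mathcal G^I(\u)$.

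The core is therefore the elastic term: I must show
\[
\liminf_{j\to\infty}h_j^{-2}\int_\om\mathcal W^I(\x,\mathbf I+h_j\nabla\u_j)\,d\x\ \ge\ \int_\om\mathcal Q^I(\x,\mathbb E(\u))\,d\x,
\]
and in particular that the right-hand side is finite only when $\tr\mathbb E(\u)=\dv\u=0$, i.e. $\u\in H^1_{\dv}$. For the trace constraint I would use $\det(\mathbf I+h_j\nabla\u_j)=1$ together with the expansion $\det(\mathbf I+h_j\nabla\u_j)=1+h_j\tr\nabla\u_j+o(h_j)$ on the set where $h_j|\nabla\u_j|$ is bounded, showing $\mathbf 1_{B_j}\tr\nabla\u_j\to 0$ strongly enough (combined with the $L^q$ smallness of the complement, $q<p$, as in the compactness lemma) to conclude $\tr\nabla\u=\dv\u=0$; if $\dv\u\neq0$ the right-hand side is $+\infty$ and $\mathcal W^I(\x,\mathbf I+h_j\nabla\u_j)$ must be genuinely positive in a way that blows up, which I would handle by the same splitting. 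On the set $B_j=\{\sqrt{h_j}|\nabla\u_j|\le1\}$ I use the quadratic Taylor expansion \eqref{regW}: on $B_j$ we have $h_j|\nabla\u_j|\le\sqrt{h_j}\to0$ uniformly, so
\[
h_j^{-2}\mathcal W^I(\x,\mathbf I+h_j\nabla\u_j)\ \ge\ \mathcal Q(\x,\mathbb E(\u_j))-\omega(\sqrt{h_j})|\nabla\u_j|^2\quad\text{on }B_j,
\]
and since $(\mathbf 1_{B_j}\nabla\u_j)$ is bounded in $L^2$ (this is exactly the estimate proved in the compactness lemma) with $\mathbf 1_{B_j}\nabla\u_j\wconv\nabla\u$ in $L^2$, the error $\omega(\sqrt{h_j})\int_{B_j}|\nabla\u_j|^2$ vanishes, and weak $L^2$ lower semicontinuity of the nonnegative convex quadratic $\mathbb E\mapsto\int_\om\mathcal Q(\x,\mathbb E)$ (nonnegativity from the ellipticity estimate derived in Section \ref{prel}) yields $\liminf_j\int_{B_j}h_j^{-2}\mathcal W^I(\x,\mathbf I+h_j\nabla\u_j)\ge\int_\om\mathcal Q(\x,\mathbb E(\u))$. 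On $B_j^c$ the integrand is nonnegative, so it only helps; combined with $\dv\u=0$ this upgrades $\mathcal Q$ to $\mathcal Q^I$ in the limit, i.e. gives $\int_\om\mathcal Q^I(\x,\mathbb E(\u))$.

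The main obstacle I anticipate is making the passage ``$\mathbf 1_{B_j}\nabla\u_j\wconv\nabla\u$ in $L^2$'' fully rigorous and simultaneously extracting the incompressibility of the limit: one has two-scale behavior because $\nabla\u_j$ is only $L^p$-bounded, not $L^2$-bounded, so one must carefully argue that the bad set $B_j^c$ has vanishing measure (Chebyshev, as in Lemma \ref{compactness}), that the truncated gradients still converge weakly to $\nabla\u$ in the weaker $L^q$ topology, and that the $L^2$ weak limit of $\mathbf 1_{B_j}\nabla\u_j$ must coincide with $\nabla\u$; only then does lower semicontinuity apply. The determinant expansion must also be controlled on $B_j$ uniformly in $\x$, for which the bound $h_j|\nabla\u_j|\le\sqrt{h_j}$ on $B_j$ is exactly what is needed. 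Everything else—the load term, dropping the nonnegative rotation-work term, the Taylor remainder bound—is routine given the preliminaries already established in the excerpt.
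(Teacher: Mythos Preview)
Your proposal is correct and follows essentially the same route as the paper: assume a uniform energy bound, deduce $\det\nabla\mathbf y_j=1$ and $\dv\u=0$ from the determinant expansion, split on $B_j=\{\sqrt{h_j}|\nabla\u_j|\le 1\}$, Taylor-expand the elastic density on $B_j$, use the $L^2$ weak convergence of the truncated gradients (from Lemma~\ref{compactness}) together with weak lower semicontinuity of the quadratic form, drop the nonnegative rotation-work term by \eqref{comp}, and pass to the limit in the load term using \eqref{globalequi}. The only cosmetic difference is that the paper writes $\mathcal W(\x,\mathbf I+h_j\nabla\u_j)=\mathcal V(\x,h_j\mathbf D_j)$ with $\mathbf D_j=\mathbb E(\u_j)+\tfrac12 h_j\nabla\u_j^T\nabla\u_j$ and applies the Taylor estimate \eqref{regV} to $\mathcal V$, then shows $\mathbf 1_{B_j}\mathbf D_j\wconv\mathbb E(\u)$ in $L^2$; you instead apply \eqref{regW} directly and use $\mathbf 1_{B_j}\nabla\u_j\wconv\nabla\u$ in $L^2$---these are equivalent since $\mathcal Q(\x,\cdot)$ depends only on the symmetric part and $\mathbf 1_{B_j}h_j\nabla\u_j^T\nabla\u_j\wconv 0$ in $L^2$.
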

\begin{proof} We may assume wlog that ${\mathcal G^I_{h_j}}(\mathbf y_j)\le M$ for any $j\in\mathbb N$ hence $\u\in H^1(\om;\mathbb R^3)$ by Lemma   \eqref{compactness}  and
%so that 
\begin{equation*}\begin{aligned}
1&=\det\nabla\mathbf y_j=\det(\mathbf R_j(\mathbf I+h_{j}\nabla\u_{j}))=\det(\mathbf I+h_{j}\nabla\u_{j})=\\
&= 1+h_{j}\dv\u_j-\frac{1}{2}h_{j}^{2}(\mathrm{Tr}(\nabla\u_{j})^{2}-(\Tr \nabla\u_{j})^{2})+h_{j}^{3}\det \nabla\u_j
\end{aligned}
\end{equation*}
a.e. in $\om$, that is, 
\begin{equation*}\lab{TrB}
\dv \u_{j}=
\frac{1}{2}h_{j}(\mathrm{Tr}(\nabla\u_{j})^{2}-(\Tr \nabla\u_{j})^{2})-h_{j}^{2}\det \nabla\u_j.
\end{equation*}
By taking into account that $\nabla\u_j$ are uniformly bounded in $L^p$ we get $h_j^{\alpha}|\nabla\u_j|\to 0$ a.e. in $\om$ for every $\alpha > 0$ hence
 $\dv \u_{j}=\frac{1}{2}h_{j}(\mathrm{Tr}(\nabla\u_{j})^{2}-(\Tr \nabla\u_{j})^{2})-h_{j}^{2}\det \nabla\u_j\to 0$ a.e. in $\om$. 
Since the weak convergence of $\nabla\u_j$ implies $\dv \u_{j}\wconv \dv \u$ weakly in $L^p(\Omega)$ we get $\dv\u=0$ a.e. in $\om$.
By  setting $$\textstyle\mathbf D_{j}:=\mathbb E(\u_{j})+\frac{1}{2}h_{j}\nabla\u_{j}^{T}\nabla\u_{j},$$ 
by \eqref{regV}, \eqref{vi} \eqref{globalequi} and \eqref{comp}, and by recalling that $B_j$ is defined in \eqref{BJ}, we get for large enough $j$
\begin{equation}\label{lowerbd}\begin{aligned}
\mathcal G_{h_j}^I(\v_j)&\ge\frac1{h_j^2}\int_{B_j}\mathcal V(\x,h_j \mathbf D_j)\,d\x-\mathcal L(\mathbf R_j\u_j)-h_j^{-1}\mathcal L((\mathbf R_j-\mathbf I)\x)\\
&\displaystyle\ge \int_{B_j}\frac12 \mathbf D_j^T\,D^2\mathcal V(\x,\mathbf 0)\,\mathbf D_j\,d\x-\int_{B_j} \eta(h_j\mathbf D_j)|\mathbf D_j|^2\,d\x-\mathcal L(\mathbf R_j\u_j)
\\&\ge \frac12\int_\om ({\mathbf 1}_{B_j}\mathbf D_j)^T\,D^2\mathcal W(\x,\mathbf I)\,(\mathbf 1_{B_j}\mathbf D_j)\,d\x- \eta(\sqrt h_j)\int_\om|{\mathbf 1}_{B_j}\mathbf D_j|^2\,d\x-\mathcal L(\mathbf R_j\u_j),
\end{aligned}
\end{equation}
since on $B_j$ we have $h_j|\mathbf D_j|\le \sqrt{h_j}\left(\sqrt{h_j}|\nabla\v_j|+\tfrac12h_j^{3/2}|\nabla \v_j^T||\nabla \v_j|\KKK\right)\le 2\sqrt{h_j}$ for large enough $j$ (so that indeed \eqref{regV} can be applied\KKK) and since $\eta$ is increasing. 
%We deduce
%\begin{equation}\begin{aligned}\lab{lowerbd}
%\mathcal G_{h_j}^I(\v_j)
%\displaystyle&\ge \frac12\int_\om ({\mathbf 1}_{B_j}\mathbf D_j)^T\,D^2\mathcal W(x,\mathbf I)\,(\mathbf 1_{B_j}\mathbf D_j)\,dx\\&\qquad- \eta(\sqrt h_j)\int_\om|{\mathbf 1}_{B_j}\mathbf D_j|^2\,dx-\mathcal L(\mathbf R_j\u_j)
%\end{aligned}
%\end{equation}
%for large enough $j$, as $\eta$ is increasing. 
Since $h_{j}\nabla\u_{j}^T\nabla\u_j\to 0$ a.e. in $\om$ and $|B_j^c|\to 0$ as $j\to+\infty$, and since $|{\mathbf 1}_{B_j}h_{j}\nabla\u_{j}^T\nabla\u_j| \le 1$, we get ${\mathbf 1}_{B_j}h_{j}\nabla\u_{j}^T\nabla\u_j\wconv 0$ weakly in $L^2(\Omega,\mathbb R^{3\times3})$. By  taking into account that ${\mathbf 1}_{B_j}\nabla\u_j\wconv \nabla\u$ weakly in $L^2(\Omega,\mathbb R^{3\times3})$, we then obtain
$\textstyle{\mathbf 1}_{B_j}\mathbf D_j\wconv \mathbb E(\u)$ weakly in $L^2(\Omega,\mathbb R^{3\times 3})$. 
Let now $\mathbf c_j\in \mathbb R^3$ such that $\u_j-\mathbf c_j\wconv \u$ weakly in $W^{1,p}(\Omega,\mathbb R^{3\times3})$. {By taking into account that, up to subsequences, Lemma \ref{compactness} entails  $\mathbf R_j\to \mathbf R\in \mathcal S^0_{\mathcal L},$ we get}
%et $\mathbf R$ be the matrix whose entries are $\mathbf R^{hk}:=\limsup_{j\to +\infty} \mathbf R_j^{hk},\ h,k\in \{1,2,3\}$. $\mathbf R \in \mathcal S^0_{\mathcal L}$ and since $\limsup_{j\to +\infty} \mathbf f\cdot\mathbf R_j ( \u_j-\mathbf c_j)= \mathbf f \cdot \mathbf R\u$  a.e. in $\om$, $\limsup_{j\to +\infty} \mathbf g\cdot\mathbf R_j ( \u_j-\mathbf c_j)= \mathbf g\cdot \mathbf R\u$  $\mathcal H^2$-a.e. in $\partial\om$  and  $\mathbf f\cdot\mathbf R_j ( \u_j-\mathbf c_j)$ and $\mathbf g\cdot\mathbf R_j ( \u_j-\mathbf c_j)$ are uniformly bounded in $L^1(\om)$ and $L^1(\partial\om)$ respectively, we get by \eqref{globalequi} and Fatou Lemma
\begin{equation*}
\lim_{j\to +\infty}\mathcal L(- \mathbf R_j \u_j)=\lim_{j\to +\infty}\mathcal L(-\mathbf R_j (\u_j-\mathbf c_j))= -\mathcal L( \mathbf R \u).
\end{equation*}
Hence, by \eqref{reg}, \eqref{lowerbd} and by the weak $L^2(\Omega,\mathbb R^{3\times3})$ lower semicontinuity of the map $\mathbf F\mapsto\int_\Omega \mathbf F^T\, D^2\mathcal W(\x,\mathbf I)\,\mathbf F\,d\x$, we conclude   
\begin{equation*}\label{pi}
\displaystyle\liminf_{j\to +\infty}\mathcal G_{h_j}^I(\v_j)\ge \frac12\int_\om \mathbb E(\u)\,D^2\mathcal W(\x,\mathbf I)\mathbb E(\u)\,d\x-\mathcal L(\mathbf R\u)\ge \mathcal G^I(\u)
\end{equation*}
which ends the proof.
\end{proof}

We next provide the construction for the recovery sequence, taking advantage of the following approximation result from  \cite{MP2}.

\begin{lemma}[{\cite[Lemma 6.2]{MP2}}]\label{Kato} Suppose that $\partial\Omega$ has a finite number of connected components.
Let $(h_j)_{j\in\mathbb N}\subset(0,1)\KKK$ be a vanishing sequence.
Let $\u\in H^1_{\dv}(\Omega,\R^3)$. There exists a sequence $(\u_j)_{j\in\mathbb N}\subset W^{2,\infty}(\om,\mathbb R^3)$ such that
\begin{itemize}
\item[i)]
 $\det(\mathbf I+h_j\nabla \u_j)=1$ for any $j\in\mathbb N$,
 \item[ii)]  $h_j\|\nabla\u_j\|_{L^\infty(\Omega)}\to 0$ as $j\to+\infty$,
  \item[iii)]  $\u_j\to\u$ strongly in $H^1(\Omega,\mathbb R^3)$ as $j\to+\infty$.
 \end{itemize}
\end{lemma}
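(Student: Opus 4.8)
The plan is to build the recovery sequence so that the exact incompressibility constraint (i) holds \emph{by construction}, using the flow of a divergence-free field — which is measure preserving by Liouville's theorem — rather than correcting the determinant a posteriori. First I would reduce, by density and a diagonal argument, to the case in which $\u$ is the restriction to $\om$ of a field that is smooth and divergence-free on a neighbourhood of $\overline\om$, and then run that flow.

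\textbf{Step 1 (smooth divergence-free approximation near $\overline\om$).} I claim there are open sets $\mathcal O_k\supset\overline\om$ and fields $\w_k\in C^\infty(\mathcal O_k,\R^3)$ with $\dv\w_k=0$ on $\mathcal O_k$ and $\w_k\to\u$ strongly in $H^1(\om,\R^3)$. Choose $\delta>0$ so small that, with $\mathcal O:=\{\x:\mathrm{dist}(\x,\overline\om)<\delta\}$, the set $\mathcal O\setminus\overline\om$ is the disjoint union of finitely many collar neighbourhoods $C_1,\dots,C_m$, one for each connected component of $\partial\om$ (here the hypothesis that $\partial\om$ has finitely many connected components enters, both to have a uniform collar width and to keep the ensuing correction a finite sum). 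Extend $\u$ to some $\tilde\u\in H^1(\mathcal O,\R^3)$; adding to $\tilde\u$, inside each $C_i$, a suitable smooth field supported in $\overline{C_i}$ with zero trace on $\partial\om$, one may arrange that $\int_{C_i}\dv\tilde\u\,d\x=0$ for every $i$ without changing $\tilde\u$ on $\om$. The Bogovskii operator on the bounded Lipschitz domain $C_i$ then provides $\v_i\in H^1_0(C_i,\R^3)$ with $\dv\v_i=\dv\tilde\u$ in $C_i$; extending each $\v_i$ by zero and setting $\mathbf U:=\tilde\u-\sum_{i=1}^m\v_i$ gives $\mathbf U\in H^1(\mathcal O,\R^3)$ with $\mathbf U=\u$ on $\om$ and $\dv\mathbf U=0$ on $\mathcal O$. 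Finally, mollification yields $\w_k:=\mathbf U*\rho_{1/k}$, which is smooth and divergence-free on the neighbourhood $\mathcal O_k:=\{\x:\mathrm{dist}(\x,\mathcal O^c)>1/k\}$ of $\overline\om$ for $k$ large, with $\w_k\to\mathbf U$ in $H^1_{\mathrm{loc}}(\mathcal O)$, hence $\w_k\to\u$ in $H^1(\om,\R^3)$.

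\textbf{Step 2 (volume-preserving flow).} Fix $k$, write $\w:=\w_k$, and let $\Phi_t$ be the flow of $\w$, i.e. $\tfrac{d}{dt}\Phi_t(\x)=\w(\Phi_t(\x))$ with $\Phi_0=\mathbf i$ (the identity map). Since $\w\in C^\infty(\mathcal O_k,\R^3)$ and $\overline\om$ is compact, there is $t_k>0$ (depending only on $\w_k$) such that for $t\in[0,t_k]$ the map $\Phi_t$ is defined and $C^\infty$ on a neighbourhood of $\overline\om$, with $\Phi_t(\overline\om)\subset\mathcal O_k$; by Liouville's theorem and $\dv\w\equiv0$ on $\mathcal O_k$, $\det\nabla\Phi_t\equiv1$ on $\om$. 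A Taylor expansion in $t$ gives $\Phi_t=\mathbf i+t\,\w+t^2\,\mathbf r_{k,t}$ with $\mathbf r_{k,t}$ bounded in $C^1(\overline\om,\R^3)$ uniformly in $t\in(0,t_k]$. For $h_j\le t_k$ define
\[
\u_j^{(k)}(\x):=h_j^{-1}\big(\Phi_{h_j}(\x)-\x\big)=\w_k(\x)+h_j\,\mathbf r_{k,h_j}(\x),\qquad \x\in\om.
\]
Then $\u_j^{(k)}\in C^\infty(\overline\om,\R^3)\subset W^{2,\infty}(\om,\R^3)$; moreover $\mathbf I+h_j\nabla\u_j^{(k)}=\nabla\Phi_{h_j}$ on $\om$, so $\det(\mathbf I+h_j\nabla\u_j^{(k)})=1$ on $\om$; and $h_j\|\nabla\u_j^{(k)}\|_{L^\infty(\om)}\le h_j(\|\nabla\w_k\|_{L^\infty(\om)}+h_j\,\|\nabla\mathbf r_{k,h_j}\|_{L^\infty(\om)})\to0$ and $\|\u_j^{(k)}-\w_k\|_{H^1(\om,\R^3)}\le C_k\,h_j\to0$ as $j\to+\infty$.

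\textbf{Step 3 (diagonalization) and the main difficulty.} For each $k$, Step 2 provides $J_k$ such that, for $j\ge J_k$, one has $h_j\le t_k$, $h_j\|\nabla\u_j^{(k)}\|_{L^\infty(\om)}\le1/k$ and $\|\u_j^{(k)}-\w_k\|_{H^1(\om,\R^3)}\le1/k$; since also $\|\w_k-\u\|_{H^1(\om,\R^3)}\to0$, a standard diagonal choice of indices $j\mapsto k(j)\to+\infty$ with $j\ge J_{k(j)}$ makes $\u_j:=\u_j^{(k(j))}$ satisfy (i), (ii) and (iii). The heart of the argument — and the only place where anything beyond routine flow estimates is required — is Step 1: a divergence-free field on $\om$ need not have zero flux through the individual connected components of $\partial\om$, so it cannot be extended and made divergence-free in one stroke by a single Bogovskii correction; instead the flux must be redistributed collar by collar so that the compatibility condition $\int_{C_i}\dv\tilde\u\,d\x=0$ holds on each piece, which is exactly what forces the assumption that $\partial\om$ has finitely many connected components.
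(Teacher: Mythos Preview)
The paper does not prove this lemma: it is quoted verbatim from \cite{MP2} and no argument is reproduced here, so there is no ``paper's own proof'' to compare against line by line. The internal label \texttt{Kato} and the hypothesis on the connected components of $\partial\Omega$ strongly suggest that the proof in \cite{MP2} proceeds by first invoking a Kato--type density result (smooth divergence-free fields defined on a neighbourhood of $\overline\Omega$ are dense in $H^1_{\dv}(\Omega,\R^3)$ when $\partial\Omega$ has finitely many connected components), and then producing the exact incompressibility constraint via the flow of the approximants.

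Your proposal follows precisely this two-stage scheme and is correct. In Step~1 you essentially reprove the relevant density statement by hand: extend, kill the mean divergence in each collar so that Bogovskii becomes applicable, subtract the Bogovskii correction, and mollify. This is sound; the only point worth sharpening is your remark ``adding to $\tilde\u$, inside each $C_i$, a suitable smooth field \ldots with zero trace on $\partial\Omega$'': a field compactly supported in $C_i$ has total divergence zero by Stokes, so to change $\int_{C_i}\dv\tilde\u$ you must allow nonzero flux through the outer face of the collar---which is exactly what you do, but it is worth saying it explicitly. Step~2 (Liouville's theorem for the flow of a smooth solenoidal field, plus the first-order Taylor expansion $\Phi_t=\mathbf i+t\w+O(t^2)$ in $C^1(\overline\Omega)$) and the diagonal extraction in Step~3 are routine and correct. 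Relative to simply citing the density theorem, your approach has the merit of making transparent where the finite-components assumption enters; the cost is a slightly longer Step~1.
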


\begin{lemma}\lab{upbd}{\bf (Upper bound)}. Suppose that $\partial\Omega$ has a finite number of connected components. Assume  \eqref{framind}, \eqref{Z1}, \eqref{reg},    
\eqref{coerc}. Let $(h_j)_{j\in\mathbb N}\subset(0,1)$ be a vanishing sequence. For every $\u\in W^{1,p}(\Omega,\mathbb R^3)$ there exists a sequence $(\mathbf u_j)_{j\in\mathbb N}\subset W^{1,p}(\Omega,\mathbb R^3)$ such that 
$\u_j\wconv \u \hbox { \rm  weakly in}\  W^{1,p}(\Omega,\mathbb R^3)$ as $j\to+\infty$ and $\mathbf R_*\in \mathcal S^0_{\mathcal L}$ such that by setting $\mathbf y_j:=\mathbf R_*(\x+h_j\u_j)$ we have
\[ \limsup_{j\to +\infty} {\mathcal G}_{h_j}^I(\mathbf y_j) \le {\mathcal G}^I(\u).\]
\end{lemma}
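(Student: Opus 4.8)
The plan is to construct the recovery sequence by a standard two-step procedure: first handle the case of a divergence-free $\u\in H^1$, and then deal with the general case in $W^{1,p}$ (where the claimed inequality is trivial since the right-hand side is $+\infty$ unless $\u\in H^1_{\dv}$). So we may assume $\u\in H^1_{\dv}(\om,\R^3)$ and that $\mathbf R_*\in\mathcal S^0_{\mathcal L}$ is chosen to realize $\max_{\mathbf R\in\mathcal S^0_{\mathcal L}}\mathcal L((\mathbf R-\mathbf I)\u)$, so that $\mathcal G^I(\u)=\int_\om\mathcal Q^I(\x,\mathbb E(\u))\,d\x-\mathcal L(\mathbf R_*\u)$.

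Next I would invoke Lemma \ref{Kato}: since $\partial\Omega$ has finitely many connected components and $\u\in H^1_{\dv}$, there is a sequence $(\u_j)_{j\in\mathbb N}\subset W^{2,\infty}(\om,\R^3)$ with $\det(\mathbf I+h_j\nabla\u_j)=1$, with $h_j\|\nabla\u_j\|_{L^\infty}\to 0$, and with $\u_j\to\u$ strongly in $H^1$. Set $\mathbf y_j:=\mathbf R_*(\x+h_j\u_j)$. Then $\det\nabla\mathbf y_j=\det\mathbf R_*\det(\mathbf I+h_j\nabla\u_j)=1$, so $\mathcal W^I(\x,\nabla\mathbf y_j)=\mathcal W(\x,\mathbf R_*(\mathbf I+h_j\nabla\u_j))=\mathcal W(\x,\mathbf I+h_j\nabla\u_j)$ by frame indifference \eqref{framind}. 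Using the uniform smallness of $h_j\nabla\u_j$ (so that $\mathbf I+h_j\nabla\u_j\in\mathcal U$ for large $j$) together with the Taylor estimate \eqref{regW}, I would write
\[
h_j^{-2}\int_\om\mathcal W^I(\x,\nabla\mathbf y_j)\,d\x\le \int_\om\frac12\,\nabla\u_j^T\,D^2\mathcal W(\x,\mathbf I)\,\nabla\u_j\,d\x+\int_\om\omega(h_j|\nabla\u_j|)|\nabla\u_j|^2\,d\x.
\]
The remainder term is bounded by $\omega(h_j\|\nabla\u_j\|_{L^\infty})\|\nabla\u_j\|_{L^2}^2\to 0$ since $\omega(t)\to0$ as $t\to0$ and $\|\nabla\u_j\|_{L^2}$ is bounded by strong $H^1$ convergence. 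Since $\tfrac12\,\mathbf F^T D^2\mathcal W(\x,\mathbf I)\,\mathbf F=\mathcal Q(\x,\mathbf F)$ depends only on $\mathrm{sym}\,\mathbf F$ and $\nabla\u_j\to\nabla\u$ in $L^2$, the first term converges to $\int_\om\mathcal Q(\x,\mathbb E(\u))\,d\x$; and since $\det(\mathbf I+h_j\nabla\u_j)=1$ forces $\dv\u_j=O(h_j\|\nabla\u_j\|_{L^\infty}\|\nabla\u_j\|)\to0$ in $L^2$, in the limit $\tr\mathbb E(\u)=\dv\u=0$, so $\int_\om\mathcal Q(\x,\mathbb E(\u))=\int_\om\mathcal Q^I(\x,\mathbb E(\u))$.

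For the load term, by frame indifference of the energy the functional reads $\mathcal G_{h_j}^I(\mathbf y_j)=h_j^{-2}\int_\om\mathcal W(\x,\mathbf I+h_j\nabla\u_j)-\mathcal L(\mathbf R_*\u_j)-h_j^{-1}\mathcal L((\mathbf R_*-\mathbf I)\x)$, and the last term vanishes identically because $\mathbf R_*\in\mathcal S^0_{\mathcal L}$. Since $\mathcal L$ is a bounded linear functional on $W^{1,p}$ (hence on $H^1$) and $\u_j\to\u$ in $H^1$, we get $\mathcal L(\mathbf R_*\u_j)\to\mathcal L(\mathbf R_*\u)=\max_{\mathbf R\in\mathcal S^0_{\mathcal L}}\mathcal L(\mathbf R\u)$. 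Combining, $\limsup_j\mathcal G_{h_j}^I(\mathbf y_j)\le\int_\om\mathcal Q^I(\x,\mathbb E(\u))\,d\x-\mathcal L(\mathbf R_*\u)=\mathcal G^I(\u)$; and $\u_j\to\u$ in $H^1$ gives in particular weak $W^{1,p}$ convergence. I expect the only genuine subtlety to be bookkeeping: ensuring $\mathbf I+h_j\nabla\u_j$ lies in the neighborhood $\mathcal U$ where \eqref{regW} applies (guaranteed by (ii) of Lemma \ref{Kato}), controlling the Taylor remainder uniformly in $\x$ via the $\x$-independent modulus $\omega$, and correctly passing to the limit in the load term while remembering to insert and discard the $h_j^{-1}\mathcal L((\mathbf R_*-\mathbf I)\x)$ term using $\mathbf R_*\in\mathcal S^0_{\mathcal L}$; the degenerate case $\u\notin H^1_{\dv}$ is immediate because then $\mathcal G^I(\u)=+\infty$.
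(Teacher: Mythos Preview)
Your proof is correct and follows essentially the same approach as the paper's own proof: both reduce to $\u\in H^1_{\dv}$, choose $\mathbf R_*$ to realize the maximum in the definition of $\mathcal G^I(\u)$, invoke Lemma~\ref{Kato} for the approximating sequence, set $\mathbf y_j:=\mathbf R_*(\x+h_j\u_j)$, and combine frame indifference, the Taylor estimate \eqref{regW} (applicable since $h_j\|\nabla\u_j\|_{L^\infty}\to0$), strong $H^1$ convergence, and $\mathbf R_*\in\mathcal S^0_{\mathcal L}$ to pass to the limit in each term. The only cosmetic difference is that you make explicit the observation $\mathcal Q(\x,\mathbb E(\u))=\mathcal Q^I(\x,\mathbb E(\u))$ via $\dv\u=0$, which the paper leaves implicit.
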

\begin{proof}
It is enough to prove the result in case $\u\in H^1_{\mathrm{div}}(\Omega,\mathbb R^3)$. We take the sequence $(\u_j)_{j\in\mathbb N}$ from Lemma \ref{Kato} so that $\u_j\to\u$ strongly in $H^1(\om,\R^3)$ as $j\to+\infty$,
and we take
%\[
%\mathbf R_j:=\argmin\left\{ \int_\Omega \mathcal Q^I(x, E(\u_j))\,dx-\mathcal L(\mathbf R\u_j): \mathbf R\in \mathcal S^0_{\mathcal L}\right\}.
%\]
%It is easy to see that, up to subsequences, $\mathbf R_j\to \mathbf R_*\in \mathcal S^0_{\mathcal L}$ as $j\to+\infty$, where 
$$\mathbf R_*\in \argmin\left\{ \int_\Omega \mathcal Q^I(\x, E(\u))\,d\x-\mathcal L(\mathbf R\u): \mathbf R\in \mathcal S^0_{\mathcal L}\right\}.$$
We set $\mathbf y_j:=\mathbf R_*(\x+h_j\u_j)$ and  $\mathcal F(\u_j):=\frac12\int_\om\nabla\u_j^T D^2\mathcal W(\x,\mathbf I) \nabla\u_j\,d\x-\mathcal L(\mathbf R_*\,\u_j)$ for any $j\in\mathbb N$.
Property ii) of Lemma \ref{Kato} yields   $\mathbf I+h_j\nabla\u_j\in\mathcal U$ for a.e. $\x$ in $\Omega$ if $j$ is large enough, where $\mathcal U$ is the neighbor of $SO(3)$ that appears in \eqref{reg}.    
In particular, $D^2\mathcal W(\x,\cdot)\in C^2(\mathcal U)$ for a.e. $\x\in\om$ 
 %and    there is an increasing function $\omega:[0,+\infty)\to\mathbb R$ such that $\lim_{y\to0}\omega(y)=0$ and $|D^2\mathcal W(x,\mathbf I+\mathbf F)-D^2\mathcal W(x,\mathbf I)|\le \omega(|\mathbf F|)$ for any $\mathbf F\in \mathcal U$ and for a.e. $x\in\om$. 
 and we make use of \eqref{regW} together with
  $\det(\mathbf I+h_j\nabla\u_j)=1$ and $\mathbf R_*\in \mathcal S^0_{\mathcal L}$ to obtain %(\MMM ho fatto qualche cambiamento\KKK)
  \begin{equation*}\label{nee}\begin{aligned}
\displaystyle\limsup_{j\to+\infty} \mathcal |\mathcal G^I_{h_j}(\mathbf y_j)-\mathcal F(\u_j)|
&\displaystyle\le \limsup_{j\to+\infty}\int_{\Omega} \left|\frac{1}{h_j^2}\mathcal W^I(\x,\mathbf I+h_j\nabla\u_j)-\frac12\,\nabla\u_j^T D^2\mathcal W(\x,\mathbf I) \nabla\u_j\right|\,d\x\\
&=\displaystyle\limsup_{j\to+\infty}\int_{\Omega} \left|\frac{1}{h_j^2}\mathcal W(\x,\mathbf I+h_j\nabla\u_j)-\frac12\,\nabla\u_j^T D^2\mathcal W(\x,\mathbf I) \nabla\u_j\right|\,d\x\\
&\displaystyle\le\limsup_{j\to+\infty}\int_{\Omega}\omega(h_j|\nabla\u_j|)\,|\nabla\u_j|^2\,d\x\\
&\le\limsup_{j\to+\infty}\,\|\omega(h_j\nabla \u_j)\|_{L^\infty(\om)}\int_{\om} |\nabla\u_j|^2\,d\x =0.
  \end{aligned}\end{equation*}
%\begin{equation*}\label{nee}\begin{array}{ll}
%&\displaystyle\limsup_{j\to+\infty} \mathcal |\mathcal G^I_{h_j}(\mathbf y_j)-\mathcal G^I(\u_j)|\le\\
%&\\
%&\displaystyle\le \limsup_{j\to+\infty}\int_{\Omega} \left|\frac{1}{h_j^2}\mathcal W^I(x,\mathbf I+h_j\nabla\u_j)-\frac12\,\nabla\u_j^T D^2\mathcal W(x,\mathbf I) \nabla\u_j\right|\,dx\\
%&\\
%&=\displaystyle\limsup_{j\to+\infty}\int_{\Omega} \left|\frac{1}{h_j^2}\mathcal W(x,\mathbf I+h_j\nabla\u_j)-\frac12\,\nabla\u_j^T D^2\mathcal W(x,\mathbf I) \nabla\u_j\right|\,dx\\
%&\\
%&\displaystyle\le\limsup_{j\to+\infty}\int_{\Omega}\omega(h_j|\nabla\u_j|)\,|\nabla\u_j|^2\,dx\\
%&\\
%&\le\limsup_{j\to+\infty}\,\|\omega(h_j\nabla \u_j)\|_{L^\infty(\om)}\int_{\om} |\nabla\v_j|^2\,dx =0.
%\end{array}\end{equation*}
The limit in the last line is zero since $h_j\nabla \u_j\to0$ in $L^\infty(\Omega)$, since $\omega$ is increasing with $\lim_{t\to0^+}\omega(t)\to 0$  and since $\u_j\to\u$ in $H^1(\Omega,\R^3)$ as $j\to+\infty$.
Then, \[\begin{aligned}\limsup_{j\to+\infty} \mathcal |\mathcal G^I_{h_j}(\mathbf y_j)-\mathcal G^I(\u)|&\le 
\limsup_{j\to+\infty} \mathcal |\mathcal G^I_{h_j}(\mathbf y_j)-\mathcal F(\u_j)|+\limsup_{j\to+\infty} \mathcal |\mathcal F(\mathbf u_j)-\mathcal G^I(\u)|\\
&\le\limsup_{j\to+\infty}\left|\frac12\int_\om \nabla\u_j^T D^2\mathcal W(\x,\mathbf I) \nabla\u_j-\frac12\int_\om\nabla\u^T D^2\mathcal W(\x,\mathbf I) \nabla\u\right|\\&\qquad\qquad +\limsup_{j\to+\infty}|\mathcal L(\mathbf R_*\u_j)-\mathcal L(\mathbf R_*\u)|=0
\end{aligned}
\]
where the limit is zero since  $\u_j\to\u$ strongly in $H^1(\Omega,\R^3)$  as $j\to+\infty$. %convergence also entails $\mathcal G^I(\v_j)\to\mathcal G^I(\v)$ as $j\to+\infty$.
\end{proof}
%\section{ Convergence of minimisers}
 We next conclude the proof of Theorem \ref{mainth1}  after having recalled that functionals $\mathcal G^I_h$ are uniformly bounded from below, which is a result that is shown in \cite{MP2}.
\begin{lemma}[{\cite[Lemma 4.1]{MP2}}]\label{lemmabound} Assume  \eqref{framind}, \eqref{Z1}, \eqref{reg}, \eqref{coerc}, 
   \eqref{globalequi} and \eqref{comp}.
There exists a constant $C> 0$ (only depending on $\Omega,p,\mathbf f,\mathbf g$) such that
$\mathcal G^I_h(\mathbf y)\ge-C$ for any $h\in (0,1)$ and any $\mathbf y\in W^{1,p}(\Omega,\mathbb R^3)$.
\end{lemma}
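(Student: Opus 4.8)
The plan is to bound $\mathcal G^I_h(\mathbf y)$ from below uniformly in $h\in(0,1)$ and $\mathbf y\in W^{1,p}(\om,\R^3)$ by combining the coercivity \eqref{coerc}, the rigidity inequality \eqref{muller}, the elementary bound \eqref{propgp}, and the sign information on the load term furnished by the compatibility condition \eqref{comp}. First I would dispose of the trivial case: if $\det\nabla\mathbf y\neq1$ on a set of positive measure, then $\mathcal W^I(\x,\nabla\mathbf y)=+\infty$ there, whence $\mathcal G^I_h(\mathbf y)=+\infty\ge-C$ and there is nothing to prove. So I may assume $\det\nabla\mathbf y=1$ a.e., in which case $\mathcal W^I(\x,\nabla\mathbf y)=\mathcal W(\x,\nabla\mathbf y)$.

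Next I would fix an optimal rotation $\mathbf R\in\mathcal A_p(\mathbf y)$ and set $\u:=h^{-1}(\mathbf R^T\mathbf y-\x)$, so that $\nabla\mathbf y-\mathbf R=h\,\mathbf R\nabla\u$ and, $\mathbf R$ being orthogonal, $|\nabla\mathbf y-\mathbf R|=h|\nabla\u|$ while $\|\nabla(\mathbf R\u)\|_{L^p(\om)}=\|\nabla\u\|_{L^p(\om)}$. For the elastic part I would chain \eqref{coerc}, \eqref{muller} and \eqref{propgp} (the last with $\eta=|\nabla\u|$), exactly as in \eqref{odds} but now without any a priori energy bound, to get
\[
h^{-2}\int_\om\mathcal W^I(\x,\nabla\mathbf y)\,d\x\ge\frac{C}{C_p}\,h^{-2}\int_\om g_p(h|\nabla\u|)\,d\x\ge\frac{C}{C_p}\Big(\|\nabla\u\|_{L^p(\om)}^p-\tfrac{2-p}{p}|\om|\Big).
\]

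For the load part I would use the decomposition $\mathbf y-\mathbf i=(\mathbf R-\mathbf I)\x+h\mathbf R\u$, giving $\mathcal L(\mathbf y-\mathbf i)=\mathcal L((\mathbf R-\mathbf I)\x)+h\mathcal L(\mathbf R\u)$. Since \eqref{comp} holds for every rotation, it applies in particular to the $\mathbf y$-dependent optimal $\mathbf R$, so $\mathcal L((\mathbf R-\mathbf I)\x)\le0$ and therefore $-h^{-1}\mathcal L(\mathbf y-\mathbf i)\ge-\mathcal L(\mathbf R\u)$. I would then bound $\mathcal L(\mathbf R\u)$ by \eqref{elle} (available because \eqref{globalequi} holds), using $\|\nabla(\mathbf R\u)\|_{L^p}=\|\nabla\u\|_{L^p}$, namely $|\mathcal L(\mathbf R\u)|\le\frac{p-1}{p}(C_{\mathcal L}\eps^{-1})^{\frac{p}{p-1}}+p^{-1}\eps^p\|\nabla\u\|_{L^p(\om)}^p$. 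Summing the two contributions and choosing $\eps$ so small that $p^{-1}\eps^p\le\frac{C}{2C_p}$ absorbs the $\|\nabla\u\|_{L^p}^p$ term into the (nonnegative) coercive term, leaving
\[
\mathcal G^I_h(\mathbf y)\ge-\frac{C}{C_p}\frac{2-p}{p}|\om|-\frac{p-1}{p}\big(C_{\mathcal L}\eps^{-1}\big)^{\frac{p}{p-1}}=:-C,
\]
a constant independent of $h$ and $\mathbf y$ and depending only on $\om,p,\mathbf f,\mathbf g$ through $|\om|$, the structural constants $C,C_p$, and $C_{\mathcal L}=K(\|\mathbf f\|_{L^{\frac{3p}{4p-3}}}+\|\mathbf g\|_{L^{\frac{2p}{3p-3}}})$.

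The argument is essentially a one-sided, $M$-free version of the estimates \eqref{bdw}--\eqref{odds} from Lemma \ref{compactness}; the only genuinely delicate point—the step I would flag as the crux—is that the sign of the load term is controlled uniformly by \eqref{comp} irrespective of how far the optimal rotation $\mathbf R=\mathbf R(\mathbf y)$ lies from the identity. Without \eqref{comp} the term $-h^{-1}\mathcal L((\mathbf R-\mathbf I)\x)$ could be driven to $-\infty$ as $h\to0$ (cf. Remark \ref{-infty}), and no uniform lower bound would hold. Everything else is routine: the rigidity inequality \eqref{muller} supplies the single rotation needed to convert $d(\nabla\mathbf y,SO(3))$ into $h|\nabla\u|$, and Young's inequality performs the absorption of the gradient term.
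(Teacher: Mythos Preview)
Your proof is correct. The paper does not give its own proof of this lemma but simply cites \cite[Lemma~4.1]{MP2}; your argument is precisely the natural one, and indeed, as you observe, it is just an $M$-free rearrangement of the estimates \eqref{bdw}--\eqref{odds} from Lemma~\ref{compactness}, with \eqref{comp} supplying the crucial sign on the rotation term and \eqref{elle} plus Young's inequality absorbing the load into the coercive part.
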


\begin{proofth1}
We obtain \eqref{convmin}  from Lemma \ref{lemmabound}. If $(\mathbf y_j)_{j\in\mathbb N}\subset W^{1,p}(\om,\mathbb R^3)$ is a sequence of quasi-minimizers of $\mathcal G^I_{h_j}$,
% such that
%\[ \label{assinf}  \lim_{j\to +\infty}\left( \mathcal G^{I}_{h_{j}}(\mathbf y_{j})-\inf_{W^{1,p}(\om,\mathbb R^3)}\mathcal G^{I}_{h_{j}}\right)= 0,
%\]
then by Lemma \ref{compactness} there exists $\u_*\in H^1(\om, \mathbb R^3)$ such that if
 $\mathbf R_j\in\mathcal A_p(\mathbf y_j)$  \KKK and 
$ \u_j(\x):=h_j^{-1}\mathbf R_j^T(\mathbf y_j(\x)-\mathbf R_j\x)$  then, up to subsequences, $\nabla \u_j\wconv \nabla\u_*$ weakly in $L^p(\om)$.  Hence by Lemma \ref{lowerbd} 
\begin{equation*}\lab{liminf}
\displaystyle\liminf_{j\to +\infty}{\mathcal G}^I_{h_j}(\mathbf y_j)\ge {\mathcal G}^I(\u_*).
\end{equation*}
On the other hand, by Lemma \ref{upbd},  for every $\u\in W^{1,p}(\Omega,\mathbb R^3)$ there exist a sequence $(\mathbf u_j)_{j\in\mathbb N}\subset W^{1,p}(\Omega,\mathbb R^3)$ satisfying
$\u_j\wconv \u \hbox { \rm  weakly in}\  W^{1,p}(\Omega,\mathbb R^3)$ as $j\to+\infty$ and   $\mathbf R_*\in \mathcal S^0_{\mathcal L}$ such that by setting $\tilde{\mathbf y}_j:=\mathbf R_*(\x+h_j\u_j)$ we have
\[ \limsup_{j\to +\infty} {\mathcal G}_{h_j}^I(\tilde{\mathbf y}_j) \le {\mathcal G}^I(\u).\]
Since
$${\mathcal G}^I_{h_j}(\mathbf y_j)+o(1)=\inf_{W^{1,p}(\om,\mathbb R^3)}\mathcal G^{I}_{h_{j}}\le {\mathcal G}^I_{h_j}(\tilde{\mathbf y}_j)\qquad \mbox{as $j\to+\infty,$}$$
 by passing to the limit as $j\to +\infty$ we get 
 ${\mathcal G}^I(\u_*)\le {\mathcal G}^I(\u)$ for every $\u\in H^1(\om,\R^3)$ thus completing the proof.
\end{proofth1}

\subsection{The compressible case}
Here we briefly show how to adapt the previous arguments to obtain the proof of Theorem \ref{mainth1comp}.
\begin{lemma}{\bf (Compactness)}.
Assume  \eqref{framind}, \eqref{Z1}, \eqref{reg}, \eqref{coerc}, 
   \eqref{globalequi} and \eqref{comp}.
 Let $(h_{j})_{j\subset\mathbb N}\subset(0,1)$ be a vanishing a sequence, let $M>0$   %be a decreasing sequence
 and let $(\mathbf y_{{j}})_{j\in\mathbb N}\subset W^{1,p}(\Omega,\R^3)$ be a sequence  such that  \begin{equation}\lab{boundGcompr}\mathcal G_{h_{j}}(\mathbf y_{j})\le M\qquad\forall \ j\in\mathbb N.\end{equation} %namely $\v_{{j}}$ is a minimizing sequence for $F_{h_{j}}$. 
 Let   $\mathbf R_j\in\mathcal A_p(\mathbf y_j)$ \KKK and $\u_j(\x):=h_j^{-1}(\mathbf R_j^T\mathbf y_j(\x)- \x)$.
  Then, the sequence $(\nabla \u_j)_{j\in\mathbb N}$ is bounded in $L^p(\om,\R^{3\times3})$ and any of its weak $L^p(\om,\R^{3\times3})$ limit points is of the form $\nabla\u_*$ for some $\u_*\in H^1(\om,\R^3)$. Moreover, any limit point of the sequence $(\mathbf R_j)_{j\in\mathbb N}\subset SO(3)$ belongs to $\mathcal S^0_{\mathcal L}$. \KKK
% Then there exist $\u_*\in H^1(\Omega,\R^3)$ and $\mathbf R_*\in \mathcal S^0_{\mathcal L}$ such that by  setting  $\mathbf R_j:=\mathbf R(\mathbf y_j)$  and $\u_j(x):=h_j^{-1}(\mathbf R_j^T\mathbf y_j(x)- x)$ we have, up to subsequences, $\mathbf R_j\to \mathbf R_*$ and $\nabla\u_j\wconv \nabla\u_*$ weakly in $L^p(\Omega,\R^{3\times3})$.
\end{lemma}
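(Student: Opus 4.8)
The plan is to repeat, almost verbatim, the argument of Lemma \ref{compactness}: the compressible case is in fact strictly simpler, since the incompressibility identities $\det\nabla\mathbf y_j=1$ play no role and the coercivity \eqref{coerc} is applied directly to $\mathcal W$ rather than to $\mathcal W^I$. First I would establish the uniform $L^p$ bound on $\nabla\mathbf u_j$. Writing $\mathcal L(\mathbf y_j-\x)=\mathcal L(\mathbf y_j-\mathbf R_j\x)+\mathcal L((\mathbf R_j-\mathbf I)\x)$ and discarding the last term by the compatibility condition \eqref{comp}, the bound \eqref{boundGcompr} together with the estimate \eqref{elle} for $\mathcal L$ gives, for every $\eps>0$,
\[
h_j^{-2}\int_\om\mathcal W(\x,\nabla\mathbf y_j)\,d\x\ \le\ \tfrac{p-1}{p}(C_{\mathcal L}\eps^{-1})^{p/(p-1)}+p^{-1}\eps^p\|\nabla\mathbf u_j\|_{L^p(\om,\R^{3\times3})}^p,
\]
exactly as in \eqref{bdw}. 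On the other hand, by \eqref{coerc}, the rigidity inequality \eqref{muller} (applicable since $\mathbf R_j\in\mathcal A_p(\mathbf y_j)$) and \eqref{propgp},
\[
h_j^{-2}\int_\om\mathcal W(\x,\nabla\mathbf y_j)\,d\x\ \ge\ C h_j^{-2}\int_\om g_p(|\nabla\mathbf y_j-\mathbf R_j|)\,d\x\ \ge\ C\|\nabla\mathbf u_j\|_{L^p(\om,\R^{3\times3})}^p-\tfrac{2-p}{p}\,C\,|\om|,
\]
as in \eqref{odds}. Choosing $\eps$ small enough, these combine to give $\|\nabla\mathbf u_j\|_{L^p(\om,\R^{3\times3})}\le Q$ with $Q$ independent of $j$.

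Next I would identify the limit of the rotations. Using frame indifference to rewrite $\mathcal G_{h_j}(\mathbf y_j)=h_j^{-2}\int_\om\mathcal W(\x,\mathbf I+h_j\nabla\mathbf u_j)\,d\x-\mathcal L(\mathbf R_j\mathbf u_j)-h_j^{-1}\mathcal L((\mathbf R_j-\mathbf I)\x)\le M$, and using $\mathcal W\ge0$, one obtains $0\le -h_j^{-1}\mathcal L((\mathbf R_j-\mathbf I)\x)\le M+\mathcal L(\mathbf R_j\mathbf u_j)$; since the right-hand side is bounded via \eqref{elle} and the $L^p$ bound just obtained, $\mathcal L((\mathbf R_j-\mathbf I)\x)\to0$, so any subsequential limit $\mathbf R_*$ of $(\mathbf R_j)$ satisfies $\mathcal L((\mathbf R_*-\mathbf I)\x)=0$, i.e. $\mathbf R_*\in\mathcal S^0_{\mathcal L}$.

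Finally, for the regularity of the weak limit: the $L^p$ bound and the Poincar\'e inequality give, up to a subsequence, $\nabla\mathbf u_j\wconv\nabla\mathbf u_*$ weakly in $L^p$ with $\mathbf u_*\in W^{1,p}(\om,\R^3)$. To upgrade to $H^1$ I would set $B_j:=\{\x\in\om:\sqrt{h_j}\,|\nabla\mathbf u_j|\le1\}$ as in \eqref{BJ}, and note that $\int_{B_j}|\nabla\mathbf u_j|^2\,d\x\le h_j^{-2}\int_\om g_p(h_j|\nabla\mathbf u_j|)\,d\x=h_j^{-2}\int_\om g_p(|\nabla\mathbf y_j-\mathbf R_j|)\,d\x$, which is uniformly bounded by the previous estimates, so $\mathbf 1_{B_j}\nabla\mathbf u_j\wconv\mathbf w$ weakly in $L^2$ for some $\mathbf w$. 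Since $|B_j^c|\to0$ by Chebyshev and $\int_{B_j^c}|\nabla\mathbf u_j|^q\,d\x\le(\int_{B_j^c}|\nabla\mathbf u_j|^p\,d\x)^{q/p}|B_j^c|^{(p-q)/p}\to0$ for $q\in(1,p)$, the decomposition $\nabla\mathbf u_j=\mathbf 1_{B_j^c}\nabla\mathbf u_j+\mathbf 1_{B_j}\nabla\mathbf u_j$ forces $\nabla\mathbf u_j\wconv\mathbf w$ weakly in $L^q$; comparing with $\nabla\mathbf u_j\wconv\nabla\mathbf u_*$ in $L^p$ yields $\mathbf w=\nabla\mathbf u_*\in L^2$, hence $\mathbf u_*\in H^1(\om,\R^3)$.

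I do not anticipate any genuine obstacle: every step is literally the corresponding step in the proof of Lemma \ref{compactness}, with $\mathcal W^I$ replaced by $\mathcal W$ and the incompressibility lines simply deleted. The only mild point of care is the bookkeeping of exponents and constants in \eqref{propgp}, \eqref{elle}, \eqref{muller}, which is entirely routine.
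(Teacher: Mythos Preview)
Your proposal is correct and follows precisely the paper's own approach: the paper's proof of this compressible compactness lemma simply states that inequalities \eqref{bdw} and \eqref{odds} hold with $\mathcal W$ in place of $\mathcal W^I$, that \eqref{boundGcompr} yields the analogue of \eqref{esse0}, and that the remainder is identical to Lemma~\ref{compactness}. The only cosmetic slip is that your first displayed inequality drops the additive constant $M$ coming from \eqref{boundGcompr}, but since you explicitly cite \eqref{bdw} this is harmless bookkeeping.
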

\begin{proof} It is readily seen that inequalities \eqref{bdw} and \eqref{odds} holds true with $\mathcal W$ in place of $\mathcal W^I$ hence by arguing as in Lemma \ref{compactness} we get that $\nabla\mathbf u_j$ are equibounded in $L^p$. Moreover  \eqref{boundGcompr} entails the analogous of \eqref{esse0} hence $\mathcal L((\mathbf R_j-\mathbf I)\x)\to 0$ as $j\to+\infty$ and  if  $\mathbf R_j\to \mathbf R_*$ along a suitable subsequence, we have $\mathcal L((\mathbf R_*-\mathbf I)\x)= 0$ that is $\mathbf R_*\in \mathcal S^0_{\mathcal L}$. The remaining part of the proof is identical to that of Lemma \ref{compactness}.
\end{proof}
 \begin{lemma}{\bf (Lower bound)}. Assume \eqref{globalequi}, \eqref{comp}, \eqref{framind}, \eqref{Z1}, \eqref{reg},    
\eqref{coerc}.
 Let $(\mathbf y_j)_{j\in\mathbb N}\subset W^{1,p}(\om,\R^3)$ be a sequence. For any $j\in\mathbb N$, let  $\mathbf R_j\in\mathcal A_p(\mathbf y_j)$ \KKK and  $\u_j(\x):=h_j^{-1}\mathbf R_j^T(\mathbf y_j(\x)-\mathbf R_j\x)$. Suppose that there exists $\u\in W^{1,p}(\om,\R^3)$ such that \KKK
  $\nabla\u_j\wconv \nabla\u$ weakly in $L^p(\om, \mathbb R^3)$. Then
\begin{equation*}
\displaystyle\liminf_{j\to +\infty}\mathcal G_{h_j}(\mathbf y_j)\ge \mathcal G(\u).
\end{equation*}
\end{lemma}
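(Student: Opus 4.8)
The strategy mirrors that of Lemma \ref{lowerbd}, and is in fact slightly simpler, since here there is no incompressibility constraint to enforce. First I would dispose of the trivial case: if $\mathcal G_{h_j}(\mathbf y_j)\le M$ fails along every subsequence there is nothing to prove, so assume $\mathcal G_{h_j}(\mathbf y_j)\le M$ for all $j$. Then the compactness lemma just proved for the compressible case provides $\u\in H^1(\om,\R^3)$ and, along a not relabeled subsequence, $\mathbf R_j\to\mathbf R_*\in\mathcal S^0_{\mathcal L}$. By frame indifference I would write $\nabla\mathbf y_j=\mathbf R_j(\mathbf I+h_j\nabla\u_j)$, so that \eqref{vi} gives $\mathcal W(\x,\nabla\mathbf y_j)=\mathcal W(\x,\mathbf I+h_j\nabla\u_j)=\mathcal V(\x,h_j\mathbf D_j)$ with $\mathbf D_j:=\mathbb E(\u_j)+\tfrac12 h_j\nabla\u_j^T\nabla\u_j$, exactly as in the incompressible proof.

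The heart of the argument is the truncation. With $B_j:=\{\x\in\om:\sqrt{h_j}\,|\nabla\u_j|\le1\}$ as in \eqref{BJ}, on $B_j$ one has $h_j|\mathbf D_j|\le2\sqrt{h_j}$ for $j$ large, so $h_j\mathbf D_j$ eventually lies in the neighbourhood where \eqref{regV} holds. Using $\mathcal W\ge0$ to discard the contribution of $B_j^c$, applying \eqref{regV} on $B_j$, and discarding the nonpositive term $-h_j^{-1}\mathcal L((\mathbf R_j-\mathbf I)\x)$ thanks to \eqref{comp}, I would reach, exactly as in the incompressible proof,
\[
\mathcal G_{h_j}(\mathbf y_j)\ \ge\ \frac12\int_\om(\mathbf 1_{B_j}\mathbf D_j)^T\,D^2\mathcal W(\x,\mathbf I)\,(\mathbf 1_{B_j}\mathbf D_j)\,d\x-\eta(\sqrt{h_j})\int_\om|\mathbf 1_{B_j}\mathbf D_j|^2\,d\x-\mathcal L(\mathbf R_j\u_j).
\]
Since $|B_j^c|\to0$ by Chebyshev's inequality, $h_j\nabla\u_j^T\nabla\u_j\to0$ a.e. with $|\mathbf 1_{B_j}h_j\nabla\u_j^T\nabla\u_j|\le1$, and $\mathbf 1_{B_j}\nabla\u_j\wconv\nabla\u$ weakly in $L^2(\om,\R^{3\times3})$, I would deduce $\mathbf 1_{B_j}\mathbf D_j\wconv\mathbb E(\u)$ weakly in $L^2(\om,\R^{3\times3})$; in particular $(\mathbf 1_{B_j}\mathbf D_j)_j$ is bounded in $L^2$, so the middle term above is $o(1)$ because $\eta(\sqrt{h_j})\to0$.

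Finally I would take the $\liminf$. For the load term, choosing $\mathbf c_j\in\R^3$ with $\u_j-\mathbf c_j\wconv\u$ weakly in $W^{1,p}(\om,\R^3)$ and using \eqref{globalequi}, $\mathbf R_j\to\mathbf R_*$ and the boundedness of $\mathcal L$ on $W^{1,p}(\om,\R^3)$ yields $\mathcal L(\mathbf R_j\u_j)=\mathcal L(\mathbf R_j(\u_j-\mathbf c_j))\to\mathcal L(\mathbf R_*\u)$; for the quadratic term I would invoke the weak $L^2$ lower semicontinuity of $\mathbf F\mapsto\int_\om\mathbf F^T D^2\mathcal W(\x,\mathbf I)\,\mathbf F\,d\x$, which is a nonnegative quadratic form by the ellipticity estimate of Section \ref{prel}. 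This produces $\liminf_{j\to+\infty}\mathcal G_{h_j}(\mathbf y_j)\ge\int_\om\mathcal Q(\x,\mathbb E(\u))\,d\x-\mathcal L(\mathbf R_*\u)$, and since $\mathbf R_*\in\mathcal S^0_{\mathcal L}$ we have $\mathcal L(\mathbf R_*\u)=\mathcal L(\u)+\mathcal L((\mathbf R_*-\mathbf I)\u)\le\mathcal L(\u)+\max_{\mathbf R\in\mathcal S^0_{\mathcal L}}\mathcal L((\mathbf R-\mathbf I)\u)$, so that $\liminf_{j\to+\infty}\mathcal G_{h_j}(\mathbf y_j)\ge\mathcal G(\u)$, as desired. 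The only genuinely delicate point, just as in the incompressible case, is the truncation step establishing $\mathbf 1_{B_j}\mathbf D_j\wconv\mathbb E(\u)$ in $L^2$ while the localization to $B_j$ costs nothing in the limit energy — this is precisely where the coercivity \eqref{coerc}, through \eqref{propgp} and the rigidity estimate \eqref{muller}, is used to control $\mathbf 1_{B_j}\nabla\u_j$ in $L^2$; everything else is routine and parallel to the incompressible computation.
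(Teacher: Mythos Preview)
Your proposal is correct and follows exactly the route the paper indicates: the paper's own proof simply says that inequality \eqref{lowerbd} from the incompressible lower bound holds with $\mathcal G_{h_j}$ in place of $\mathcal G_{h_j}^I$, and that the remaining arguments carry over verbatim --- precisely what you have written out. One cosmetic slip: the term $-h_j^{-1}\mathcal L((\mathbf R_j-\mathbf I)\x)$ is \emph{nonnegative} by \eqref{comp} (not nonpositive), which is why dropping it yields a valid lower bound.
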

\begin{proof} It is enough to notice that inequality \eqref{lowerbd} in the proof of Lemma \ref{lowerbd} holds true with $\mathcal G_{h_j}$ in place of  $\mathcal G_{h_j}^I$. The proof follows by means of the same arguments therein.
\end{proof}

\begin{lemma}{\bf (Upper bound)}
Assume  \eqref{framind}, \eqref{Z1}, \eqref{reg},    
\eqref{coerc}. Let $(h_j)_{j\in\mathbb N}\subset(0,1)$ be a vanishing sequence. For every $\u\in W^{1,p}(\Omega,\mathbb R^3)$ there exists a sequence $(\mathbf u_j)_{j\in\mathbb N}\subset W^{1,p}(\Omega,\mathbb R^3)$ such that 
$\u_j\wconv \u \hbox { \rm  weakly in}\  W^{1,p}(\Omega,\mathbb R^3)$ as $j\to+\infty$ and $\mathbf R_*\in \mathcal S^0_{\mathcal L}$ such that by setting $\mathbf y_j:=\mathbf R_*(\x+h_j\u_j)$ we have
\[ \limsup_{j\to +\infty} {\mathcal G}_{h_j}(\mathbf y_j) \le {\mathcal G}(\u).\]

\end{lemma}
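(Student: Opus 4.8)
The plan is to follow the proof of Lemma \ref{upbd}, with the crucial simplification that, since there is now no incompressibility constraint, the constrained approximation of Lemma \ref{Kato} is not needed; this is precisely why the present lemma does not require $\partial\Omega$ to have finitely many connected components. As there, we may assume $\u\in H^1(\om,\R^3)$, since otherwise $\mathcal G(\u)=+\infty$ and, e.g., $\u_j\equiv\u$ works. I would first build the recovery sequence by a plain diagonal density argument: take $\w_k\in C^\infty(\overline\Omega,\R^3)$ with $\w_k\to\u$ strongly in $H^1(\om,\R^3)$ and, using that each $\w_k\in W^{1,\infty}(\om,\R^3)$, choose an increasing $j\mapsto k(j)$ diverging slowly enough that $h_j\|\nabla\w_{k(j)}\|_{L^\infty(\om)}\to0$; set $\u_j:=\w_{k(j)}$. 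Then $\u_j\to\u$ strongly in $H^1(\om,\R^3)$, hence $\u_j\wconv\u$ weakly in $W^{1,p}(\om,\R^3)$ (as $p\le2$ and $\Omega$ is bounded), and moreover $h_j\|\nabla\u_j\|_{L^\infty(\om)}\to0$.

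Next I would choose
\[
\mathbf R_*\in\argmin\left\{\int_\om\mathcal Q(\x,\mathbb E(\u))\,d\x-\mathcal L(\mathbf R\u)\ :\ \mathbf R\in\mathcal S^0_{\mathcal L}\right\},
\]
which exists because $\mathcal S^0_{\mathcal L}$ is a closed, hence compact, subset of $SO(3)$ and $\mathbf R\mapsto\mathcal L(\mathbf R\u)$ is continuous; since $\mathcal L((\mathbf R-\mathbf I)\u)=\mathcal L(\mathbf R\u)-\mathcal L(\u)$, this choice gives $\int_\om\mathcal Q(\x,\mathbb E(\u))\,d\x-\mathcal L(\mathbf R_*\u)=\mathcal G(\u)$. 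Setting $\mathbf y_j:=\mathbf R_*(\x+h_j\u_j)$, so that $\nabla\mathbf y_j=\mathbf R_*(\mathbf I+h_j\nabla\u_j)$, frame indifference \eqref{framind} gives $\mathcal W(\x,\nabla\mathbf y_j)=\mathcal W(\x,\mathbf I+h_j\nabla\u_j)$, while $\mathbf R_*\in\mathcal S^0_{\mathcal L}$ makes the term $h_j^{-1}\mathcal L((\mathbf R_*-\mathbf I)\x)$ vanish, so that
\[
\mathcal G_{h_j}(\mathbf y_j)=h_j^{-2}\int_\om\mathcal W(\x,\mathbf I+h_j\nabla\u_j)\,d\x-\mathcal L(\mathbf R_*\u_j).
\]

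Because $h_j\|\nabla\u_j\|_{L^\infty(\om)}\to0$, for large $j$ we have $\mathbf I+h_j\nabla\u_j\in\mathcal U$ a.e.\ in $\Omega$, so the Taylor estimate \eqref{regW} applies; recalling from \eqref{vi} that $\mathbf F^T D^2\mathcal W(\x,\mathbf I)\mathbf F$ depends only on $\mathrm{sym}\,\mathbf F$, so that $\tfrac12\,\mathrm{sym}\,\nabla\u_j\, D^2\mathcal W(\x,\mathbf I)\,\mathrm{sym}\,\nabla\u_j=\mathcal Q(\x,\mathbb E(\u_j))$, integration yields
\[
\left|\mathcal G_{h_j}(\mathbf y_j)-\int_\om\mathcal Q(\x,\mathbb E(\u_j))\,d\x+\mathcal L(\mathbf R_*\u_j)\right|\le\|\omega(h_j\nabla\u_j)\|_{L^\infty(\om)}\int_\om|\nabla\u_j|^2\,d\x\longrightarrow0,
\]
since $\omega$ is increasing with $\lim_{t\to0^+}\omega(t)=0$ and $\int_\om|\nabla\u_j|^2\,d\x$ stays bounded. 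Finally, $\u_j\to\u$ strongly in $H^1(\om,\R^3)$ gives $\int_\om\mathcal Q(\x,\mathbb E(\u_j))\,d\x\to\int_\om\mathcal Q(\x,\mathbb E(\u))\,d\x$ and $\mathcal L(\mathbf R_*\u_j)\to\mathcal L(\mathbf R_*\u)$, whence $\mathcal G_{h_j}(\mathbf y_j)\to\int_\om\mathcal Q(\x,\mathbb E(\u))\,d\x-\mathcal L(\mathbf R_*\u)=\mathcal G(\u)$, which in particular gives $\limsup_{j\to+\infty}\mathcal G_{h_j}(\mathbf y_j)\le\mathcal G(\u)$.

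No genuine obstacle arises: each step is a simplification of the corresponding step in Lemma \ref{upbd}. The only point needing mild care is the recovery sequence, because \eqref{regW} requires $\mathbf I+h_j\nabla\u_j$ to stay in the neighbourhood $\mathcal U$ on which $\mathcal W(\x,\cdot)$ is $C^2$; hence one cannot take $\u_j\equiv\u$ for a general $\u\in H^1$, and the diagonal smoothing ensuring $h_j\|\nabla\u_j\|_{L^\infty(\om)}\to0$ replaces — at a much lower cost — the constrained approximation of Lemma \ref{Kato} used in the incompressible case.
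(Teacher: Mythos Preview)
Your proof is correct and follows essentially the same route as the paper: reduce to $\u\in H^1$, build $(\u_j)$ by smoothing so that $h_j\|\nabla\u_j\|_{L^\infty}\to0$ and $\u_j\to\u$ in $H^1$ (the paper just says ``standard mollification'' where you spell out the diagonal argument), pick $\mathbf R_*$ as the optimal rotation in $\mathcal S^0_{\mathcal L}$, and conclude via \eqref{regW} and strong $H^1$ convergence. There is no substantive difference.
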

\begin{proof}
We assume wlog that $\u\in H^1(\om,\mathbb R^3)$. If we let $(\u_j)_{j\in\mathbb N}$ be a sequence obtained by a standard mollification of $\u$, then properties ii) and iii) of Lemma \ref{Kato} hold true. We also let
%\[
%\mathbf R_j:=\argmin\left\{ \int_\Omega \mathcal Q^I(x, E(\u_j))\,dx-\mathcal L(\mathbf R\u_j): \mathbf R\in \mathcal S^0_{\mathcal L}\right\}.
%\]
%It is easy to see that, up to subsequences, $\mathbf R_j\to \mathbf R_*\in \mathcal S^0_{\mathcal L}$ as $j\to+\infty$, where 
$$\mathbf R_*\in \argmin\left\{ \int_\Omega \mathcal Q(\x, E(\u))\,d\x-\mathcal L(\mathbf R\u): \mathbf R\in \mathcal S^0_{\mathcal L}\right\},$$
so that by
letting $\mathbf y_j:=\mathbf R(\x+h_j\u_j)$ we obtain
 \begin{equation*}\label{nee}\begin{aligned}
\displaystyle\limsup_{j\to+\infty} \mathcal |\mathcal G_{h_j}(\mathbf y_j)-\mathcal G(\u_j)|
\displaystyle\le \limsup_{j\to+\infty}\int_{\Omega} \left|\frac{1}{h_j^2}\mathcal W(\x,\mathbf I+h_j\nabla\u_j)-\frac12\,\nabla\u_j^T D^2\mathcal W(\x,\mathbf I) \nabla\u_j\right|\,d\x=0
%\\&+|\frac1{h_j}\mathcal L(\mathbf R_*(x+h_j\u_j)-x)-\mathcal L(\mathbf R_*\u))|\\
%&=\displaystyle\limsup_{j\to+\infty}\int_{\Omega} \left|\frac{1}{h_j^2}\mathcal W(x,\mathbf I+h_j\nabla\u_j)-\frac12\,\nabla\u_j^T D^2\mathcal W(x,\mathbf I) \nabla\u_j\right|\,dx+\limsup_{j\to+\infty}\left|\mathcal L(\mathbf R_*\u_j)-\mathcal L(\mathbf R_*\u)\right|\\
%&\displaystyle\le\limsup_{j\to+\infty}\int_{\Omega}\omega(h_j|\nabla\u_j|)\,|\nabla\u_j|^2\,dx\\
%&\le\limsup_{j\to+\infty}\,\|\omega(h_j\nabla \u_j)\|_{L^\infty(\om)}\int_{\om} |\nabla\u_j|^2\,dx =0.
  \end{aligned}\end{equation*}
where the limit is zero by the same argument used in the proof of Lemma \ref{upbd}. Therefore
 \[\begin{aligned}\limsup_{j\to+\infty} \mathcal |\mathcal G_{h_j}(\mathbf y_j)-\mathcal G(\u)|&\le 
\limsup_{j\to+\infty} \mathcal |\mathcal G_{h_j}(\mathbf y_j)-\mathcal \mathcal G(\u_j)|+\limsup_{j\to+\infty} \mathcal |\mathcal G(\mathbf u_j)-\mathcal G(\u)|\\
&\le\limsup_{j\to+\infty}\left|\frac12\int_\om \nabla\u_j^T D^2\mathcal W(\x,\mathbf I) \nabla\u_j-\frac12\int_\om\nabla\u^T D^2\mathcal W(\x,\mathbf I) \nabla\u\right|\\&\qquad\qquad +\limsup_{j\to+\infty}|\mathcal L(\mathbf R_*\u_j)-\mathcal L(\mathbf R_*\u)|=0
\end{aligned}\]
where the limit is zero thanks to the strong convergence of $\u_j$ to $\u$ in $H^1(\om,\mathbb R^3)$.\end{proof}

%It is worth noting now that the analogous of Lemma 4.3 is not needed since the incompressibility constraint should not be verified in order to construct the recovery sequence. Moreover by taking into account that $\mathcal G_{h_j}(\mathbf y)\le \mathcal G_{h_j}^I(\mathbf y)$ for every $\mathbf y\in W^{1,p}(\om;\mathbb R^3)$ then the analogous of Lemma 4.4 trivially holds and the recovery sequence can be constructed in the same way. 

\begin{proofad2}
By arguing as in Lemma 3.1 of  \cite{MPTARMA} it is readily seen that there exists a constant $C> 0$ (only depending on $\Omega,p,\mathbf f,\mathbf g$) such that
$\mathcal G_h(\mathbf y)\ge-C$ for any $h\in (0,1)$ and any $\mathbf y\in W^{1,p}(\Omega,\mathbb R^3)$. Therefore the proof can be achieved by repeating the argument of the proof of Theorem \ref{mainth1}.
\end{proofad2}

\section{ The gap with linear elasticity: proof of Theorem \ref{mainth3}}\label{counterexamples}
 In this section we show that if $\mathcal S^0_{\mathcal L}$ is not reduced to the identity matrix,
 the minimization problem in the definition of functional $\mathcal G^I(\u)$ (resp. $\mathcal G(\u)$) is not solved in general by $\mathbf R=\mathbf I$ if $\u$ minimizes $\mathcal G^I$ (resp. $\mathcal G$) over $W^{1,p}(\Omega,\R^3)$. In particular, the minimal value of $\mathcal G^I$  (resp. $\mathcal G$) can be strictly below the minimum value of the standard functional of linearized elasticity $\mathcal E^I$ (resp. $\mathcal E$). %This shows that in general the actual variational limit of functionals $\mathcal G^I_{h_j}$ (resp. $\mathcal G_{h_j}$) is not the linearized elasticity functional $\mathcal E^I$ (reps. $\mathcal E$).
%In our first example, we assume that $\Omega$ is a cylinder.

%\subsection{Example 1.} 
 %We assume that $\Omega$ is a cylinder: \begin{equation}\label{cyl}\Omega:=\{(x,y,z)\in\mathbb R^3: x^2+y^2<1,\,0<z<1\}=B\times(0,1).\end{equation} 
 Here and in the following of this section, $\Omega$ is the set defined in \eqref{cyl2}, $B$ denotes the unit ball centered at the origin in the $xy$ plane, while $\nabla$ and $\Delta$ shall denote the gradient and the Laplacian in the $x,y$ variables, respectively. 
 Moreover, the form of external forces is that of \eqref{ef2}, and the conditions \eqref{effe12}-\eqref{effe22} are assumed to hold.
% We shall consider a particular choice for functional $\mathcal L$ from \eqref{external},
%  letting $\mathbf g\equiv\mathbf 0$ and  letting
%  $\mathbf f\in L^2(\Omega,\mathbb R^3)$ have a specific form. Namely, we let \begin{equation}\label{ef}
%  \mathcal L(\u)=\int_\Omega \mathbf f\cdot \u\qquad\mbox{with}\qquad
%  \mathbf f(x,y,z):=(\varphi_{x}(x,y), \varphi_y(x,y),\psi(z)),\end{equation}
% where $\varphi$ and $\psi$ satisfy  
% the following restrictions.
% \begin{equation}\label{effe1}\tag{${\mathbf f\bb1}$}\begin{aligned}
%  &\mbox{$\varphi\in C^2(\overline B)$   is radial,  there exists $(x,y)\in B$ such that $\Delta\varphi(x,y)\neq 0$,}\\[-5pt]
%  & \mbox{$\phi(1)=\phi'(1)= \int_0^1 r^2\phi'(r)\,dr=0$,} \mbox{ where $\phi$ denotes the radial profile of $\varphi$.}\end{aligned}\end{equation}
% \begin{equation}\label{effe2}\tag{${\mathbf f\bb2}$}
% \mbox{$\psi\in C^0([0,1]),\quad
% \int_0^1\psi(z)\,dz=0, \quad\int_0^1z\psi(z)\,dz\ge 0$}.
% \end{equation}
% Condition \eqref{effe1} can be satisfied by choosing for instance $\varphi$ to be a suitable polynomial in the radial variable, like $\phi(r)=4r^6-9r^4+6r^2-1$.  
We also introduce the auxiliary volume force field $$\tilde {\mathbf f}(x,y,z):=(\varphi_y(x,y),-\varphi_x(x,y),\psi(z)),$$ 
and we notice that  % $\mathbf f=(\varphi_x,\varphi_y,0)$ and $\tilde{\mathbf f}=(\varphi_y,-\varphi_x,0)$, we have
 $\tilde{\mathbf f}=\tilde{\mathbf R}\mathbf f$, where $\tilde{\mathbf R}$ is the rotation matrix
 \begin{equation}\label{tildematrix}
\tilde{\mathbf R}:={\footnotesize\left(\begin{array}{ccc}0&1&0\\-1&0&0\\0&0&1\end{array}\right)}.
 \end{equation}

 %We shall also consider the following rotated forces $$\tilde {\mathbf f}(x,y,z)=(\varphi_y(x,y),-\varphi_x(x,y),\psi(z)).$$ 
 Exploiting \eqref{effe12} and \eqref{effe22}, it is not difficult to check that  $\mathcal L$ from \eqref{ef2} satisfies \eqref{globalequi}. Concerning \eqref{comp},
%Moreover, thanks to the assumption  $\int_0^1z\psi(z)\,dz\ge 0$, condition \eqref{tuttozero} is satisfied by $\mathbf f$ and $\tilde{\mathbf f}$. 
 in view of the general form of $\mathbf W\in\mathbb R^{3\times3}_{\mathrm{Skew}}$, i.e.,
 \begin{equation*}{\footnotesize
 \mathbf W:=\left(\begin{array}{ccc}0&a&b\\-a&0&c\\-b&-c&0\end{array}\right)}\qquad a,b,c\in\mathbb R,
 \end{equation*}
  under assumptions \eqref{effe12} and \eqref{effe22} we have
  \[
   \int_\Omega\mathbf f(\x)\cdot\mathbf W\x\,d\x=\int_\Omega\tilde{\mathbf f}(\x)\cdot\mathbf W\x\,d\x=0
  \]
  and
 \begin{equation}\label{middle}
 \int_\Omega\mathbf f(\x)\cdot\mathbf W^2\x\,d\x=\int_\Omega\tilde{\mathbf f}(\x)\cdot\mathbf W^2\x\,d\x=-\pi(b^2+c^2)\int_0^1 z\psi(z)\,dz\le 0, \end{equation}
so that by invoking the Euler-Rodrigues formula \eqref{eurod}  we see that $\mathcal L$ satisfies \eqref{comp}  as well.
 From \eqref{middle} we see that  if  $\int_0^1 z\psi(z)\,dz>0$, then 
 \[
\int_\Omega \mathbf f(\x)\cdot\mathbf W^2\x\,d\x = 0
\]
 if and only if   $b^2+c^2=0$, so that in view of \eqref{eurod} the set  $\mathcal S^0_{\mathcal L}$ coincides with the set of rotation matrices around the $z$ axis, i.e., 
  \begin{equation}\label{S0Lcontrex}
  \mathcal S^0_{\mathcal L}=\{\mathbf R_\theta:\theta\in[-\pi,\pi]\},\qquad\mbox{where}\quad
\mathbf R_\theta={\footnotesize\left(\begin{array}{ccc}\cos\theta&\sin\theta&0\\-\sin\theta&\cos\theta&0\\0&0&1\end{array}\right)},
 \end{equation}
 %\beeq\lab{S0Lcontrex}\mathcal S^0_{\mathcal L}=\mathbf I\cup\{ a( \mathbf e_1\otimes\mathbf e_2-\mathbf e_2\otimes\mathbf e_1)+ \mathbf e_3\otimes \mathbf e_3: a=\pm 1\}\eneq
 in particular $\mathcal S^0_{\mathcal L}$ is not reduced to the identity matrix and it is a strict subset of $SO(3)$.  
 % even if \eqref{comp} does not hold, still strict inequality holds in \eqref{tuttozero} for a subclass of $\mathbb R^{3\times3}_{\mathrm{Skew}}$.
 On the other hand, if $\int_0^1z\psi(z)\,dz=0$, then  $\mathcal S_{\mathcal L}^0\equiv SO(3)$.  
 %we notice that $\tilde{\mathbf R}^T-\mathbf I=\mathbf W^2-\mathbf W$, where $\mathbf W$ is the matrix from \eqref{matrix} with the choice $a=1$ and $b=c=0$, so that indeed 
In both cases we clearly have
\begin{equation}\label{inthekernel}
\tilde{\mathbf R}\in\mathcal S^0_{\mathcal L}\qquad\mbox{and}\qquad \tilde{\mathbf R}^T\in\mathcal S^0_{\mathcal L}.
\end{equation}

 \KKK

 Concerning the strain energy density, in this section we assume that $\mathcal W$ satisfies  \eqref{framind}, \eqref{Z1}, \eqref{reg}, \eqref{coerc} and \eqref{Venant3},
 %\begin{equation}\label{Venant}\mathcal W(x,\mathbf F)=\mathcal W(\mathbf F)=|\mathbf F^T\mathbf F-\mathbf I|^2,\end{equation} 
 and we let $\mathcal W^I(\x,\mathbf F)=\mathcal W^I(\mathbf F)$ be equal to $\mathcal W(\mathbf F)$ is $\det\mathbf F=1$ and equal to $+\infty$ otherwise.
 % Therefore,  assumptions  (with $p=2$) are  satisfied by $\mathcal W$.
  %Similarly, $\mathcal W$ satisfies the assumptions ...
%By assuming there are no surface forces, the functionals $\mathcal G_h^I$ of finite incompressible elasticity is therefore reduced to
%\[
%\mathcal G_h(\mathbf y):=\frac1{h^2}\int_\Omega |\nabla \mathbf y^T\nabla\mathbf y-\mathbf I|^2\,dx-\frac1h\,\mathcal L(\mathbf y-\bb i)\,\qquad \mathbf y\in H^1(\om,\R^3),
% \] 
%where $\mathcal L$ is defined by \eqref{external} with $\mathbf g\equiv\mathbf 0$ and $\mathbf f$ is defined by \eqref{ef}.
% The functional of finite elasticity is the Kirchoff - Saint-Venant energy
% \[
% \mathcal F_h(\v):=\frac1{h^2}\int_\Omega |(\mathbf I+h\nabla\v)^T(\mathbf I+h\nabla \v)-\mathbf I|^2-\int_\Omega\mathbf f\cdot\v,
% \] 
% defined for $\v\in H^1(\Omega,\mathbb R^3)$,
 With these assumptions on $\mathcal W$, 
 %it is readily seen that we have $\tfrac12\mathbb E(\u)D^2\mathcal W(\mathbf I)\mathbb E(\u)=4|\mathbb E(\u)|^2$,
 the  functional of linearized elasticity is reduced to
 \[
 \mathcal E(\u):=4\int_{\Omega}|\mathbb E(\u)|^2\,d\x-\mathcal L(\u),\qquad \u\in H^1(\om,\R^3),
 \] 
  while the limit functional $\mathcal G$ becomes
 \begin{equation*}
\displaystyle {\mathcal G}(\u)= 4\int_\om | \mathbb E(\u)|^2\,d\x-\mathcal L(\u)-\max_{\mathbf R\in\mathcal S^0_{\mathcal L}}\mathcal L((\mathbf R-\mathbf I)\x),\qquad  \u\in H^1(\om,\mathbb R^3).
%\end{array}\right.
\end{equation*}
For the following arguments, it is also convenient to introduce the auxiliary functional
\begin{equation}\label{auxfunct}
\tilde{\mathcal G}(\u):=\displaystyle4\int_{\Omega}|\mathbb E(\u)|^2\,d\x-\mathcal L(\tilde{\mathbf R}^T\u),\qquad \u\in H^1(\om,\mathbb R^3)\end{equation}
where $\tilde {\mathbf R}$ is given by \eqref{tildematrix}. \MMM The above functionals $\mathcal E$, $\mathcal G$, $\tilde{\mathcal G}$ are extended as usual to $W^{1,p}(\om,\R^3)\setminus H^1(\om,\R^3)$ with value $+\infty$. \KKK
 Due to \eqref{globalequi} and Korn inequality, it follows from standard arguments that the functionals $\mathcal E,\mathcal G,\tilde{\mathcal G}$   admit minimizers over $H^1(\Omega,\mathbb R^3)$. %Furthermore, minimizers are unique up to infinitesimal rigid displacement, that is, if $\v$ and $\bar\v$ are minimizers, then $\mathbb E(\v)=\mathbb E(\bar\v)$. 
% \[
 %\tilde{\mathcal F}^I(\v):= 4\int_{\Omega}|\mathbb E(\v)|^2-\int_\Omega\tilde{\mathbf f}\cdot\v,
 %\]

 We shall also consider the incompressible case, by considering functionals  $\mathcal G^I$, $\mathcal E^I$, as defined in Section \ref{sectmain}, \MMM under the assumption \eqref{Venant3} \KKK for $\mathcal W$, and with external loads given by functional $\mathcal L$ from \eqref{external} with $\mathbf g\equiv0$ and $\mathbf f$ of the form \eqref{ef2}. Again, we introduce the auxiliary functional 
\[\tilde{\mathcal G}^I(\u):=\left\{\begin{array}{ll}
\displaystyle4\int_{\Omega}|\mathbb E(\u)|^2\,d\x-\mathcal L(\tilde{\mathbf R}^T\u)\quad&\hbox{if} \ \u\in H^1_{\dv}(\om,\mathbb R^3)\vspace{0.1cm}\\
+\infty\quad &\hbox{otherwise in} \ \MMM W^{1,p}(\om,\mathbb R^3)\KKK.
\end{array}\right.\]
where $\tilde {\mathbf R}$ is given by \eqref{tildematrix}. Existence of minimizers also holds for $\mathcal E^I,\mathcal G^I,\tilde {\mathcal G}^I$, since  the divergence-free constraint is weakly closed in $H^1(\om,\mathbb R^3)$.

 Before proving Theorem \ref{mainth3}, we provide an auxiliary statement.

%  \[
% \mathcal F^I_h(\v):=\left\{\begin{array}{ll}\displaystyle\frac1{h^2}\int_\Omega |(\mathbf I+h\nabla\v)^T(\mathbf I+h\nabla \v)-\mathbf I|^2-\int_\Omega\mathbf f\cdot\v\qquad&\mbox{if  $\det(\mathbf I+h\nabla\v)=1$}\\
% +\infty\qquad&\mbox{otherwise in $H^1(\Omega,\mathbb R^3)$}\end{array}\right.
% \] 
% and the corresponding functional of linearized incompressible elasticity
%   \[
% \mathcal F^I(\v):=\left\{\begin{array}{ll}\displaystyle4\int_{\Omega}|\mathbb E(\v)|^2-\int_\Omega{\mathbf f}\cdot\v\qquad&\mbox{if $\v\in H^1_{\dv}(\Omega,\mathbb R^3)$}\\
% +\infty\qquad&\mbox{otherwise in $H^1(\Omega,\mathbb R^3)$}\end{array}\right.
% \] 
%Similarly as above, under assumption \eqref{globalequi}, functional $\mathcal F^I$ admits minimizers over $H^1(\Omega,\mathbb R^3)$, and the same holds for functional $\tilde{\mathcal F}^I$ whose definition is obtained by replacing $\mathbf f$ with $\tilde{\mathbf f}$.
% 
% We have the following results about minimization of $\tilde {\mathcal F}^I$ and $\tilde{\mathcal F}$.

 \begin{lemma}\label{lemma21} %Suppose that $\Delta\varphi$ is not identically zero on $B$.
%Assume \eqref{effe1} and \eqref{effe2}. 
Assume \eqref{cyl2}, \eqref{ef2}, \eqref{Venant3}, \eqref{effe12}, \eqref{effe22}.
Then there hold
\[
\min_{H^1(\Omega,\mathbb R^3)}\tilde{\mathcal G}\le \min_{u\in H^2(B)} \int_{B}8u_{xy}^2+2(u_{yy}-u_{xx})^2+ u \Delta\varphi \;\;+\min_{w\in H^1(0,1)} 4\pi\int_0^1 w'^{\,2}-\pi\int_{0}^1 w\psi<0\]
%$$\min_{H^1(\Omega;\mathbb R^3)}\tilde{\mathcal F}^I<0.$$
and
\[
\min_{H^1(\Omega,\mathbb R^3)}\tilde{\mathcal G}^I\le \min_{u\in H^2(B)} \int_{B}8u_{xy}^2+2(u_{yy}-u_{xx})^2+ u \Delta\varphi <0.
\]
 \end{lemma}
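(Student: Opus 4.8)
The plan is to construct, for each pair $(u,w)\in H^2(B)\times H^1(0,1)$, an explicit competitor for $\tilde{\mathcal G}$ (and, when $w$ is constant, for $\tilde{\mathcal G}^I$) whose energy splits exactly into the two reduced functionals on the right-hand sides. I would take
\[
\u(x,y,z):=\bigl(u_y(x,y),\,-u_x(x,y),\,w(z)\bigr),
\]
which lies in $H^1(\Omega,\mathbb R^3)$ because $\Omega=B\times(0,1)$ and $u\in H^2(B)$; moreover $\dv\u=u_{xy}-u_{yx}+w'(z)=w'(z)$, so if $w$ is constant then $\u\in H^1_{\dv}(\Omega,\mathbb R^3)$ and $\u$ is admissible for $\tilde{\mathcal G}^I$ as well. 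A direct computation of the symmetric gradient gives $|\mathbb E(\u)|^2=2u_{xy}^2+\tfrac12(u_{yy}-u_{xx})^2+w'(z)^2$, so that, integrating over $z\in(0,1)$ and using $|B|=\pi$,
\[
4\int_\Omega|\mathbb E(\u)|^2\,d\x=\int_B\bigl(8u_{xy}^2+2(u_{yy}-u_{xx})^2\bigr)\,dx\,dy+4\pi\int_0^1 w'(z)^2\,dz .
\]
For the load term, by \eqref{auxfunct} and $\tilde{\mathbf f}=\tilde{\mathbf R}\mathbf f=(\varphi_y,-\varphi_x,\psi)$ one has $\mathcal L(\tilde{\mathbf R}^T\u)=\int_\Omega\tilde{\mathbf f}\cdot\u=\int_\Omega(\varphi_x u_x+\varphi_y u_y+\psi\,w)\,d\x$; integrating by parts in $B$ and using that $\varphi$ is radial with $\phi'(1)=0$, so $\partial_n\varphi\equiv0$ on $\partial B$, gives $\int_B\nabla\varphi\cdot\nabla u=-\int_B u\,\Delta\varphi$, whence $\mathcal L(\tilde{\mathbf R}^T\u)=-\int_B u\,\Delta\varphi\,dx\,dy+\pi\int_0^1\psi(z)\,w(z)\,dz$. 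Combining these identities,
\[
\tilde{\mathcal G}(\u)=\Bigl(\int_B 8u_{xy}^2+2(u_{yy}-u_{xx})^2+u\,\Delta\varphi\Bigr)+\Bigl(4\pi\int_0^1 (w')^2-\pi\int_0^1 w\,\psi\Bigr),
\]
so minimizing the two brackets independently over $u$ and $w$ gives the first inequality. For $\tilde{\mathcal G}^I$ one restricts to constant $w$: then the second bracket vanishes, its first term because $w'\equiv0$ and its second term because $\int_0^1\psi=0$ by \eqref{effe22}, and the second inequality follows.

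It then remains to check that the two reduced problems are well posed with strictly negative minima. The axial functional $w\mapsto 4\pi\int_0^1 (w')^2-\pi\int_0^1 w\,\psi$ is invariant under the addition of constants (because $\int_0^1\psi=0$) and, by the Poincaré inequality on $\{\int_0^1 w=0\}$, strictly convex and coercive there; hence it attains a finite minimum, which is $\le0$ since $w\equiv0$ is admissible. For the planar functional $J(u):=\int_B 8u_{xy}^2+2(u_{yy}-u_{xx})^2+u\,\Delta\varphi$, set $Q(u):=\int_B\bigl(8u_{xy}^2+2(u_{yy}-u_{xx})^2\bigr)$. Using the pointwise identity $8u_{xy}^2+2(u_{yy}-u_{xx})^2=2|D^2u|^2-4\det D^2u$ together with the fact that $\det D^2u$ is a null Lagrangian, so that $\int_B\det D^2u$ is controlled by lower-order norms, one obtains a Gårding inequality $\|u\|_{H^2(B)}^2\le C\bigl(Q(u)+\|u\|_{H^1(B)}^2\bigr)$; combined with the compact embedding $H^2(B)\hookrightarrow H^1(B)$ this shows that $Q$ is coercive on $H^2(B)$ modulo its finite-dimensional kernel $N=\mathrm{span}\{1,x,y,x^2+y^2\}$. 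The three conditions $\phi(1)=\phi'(1)=\int_0^1 r^2\phi'(r)\,dr=0$ in \eqref{effe12} are exactly what make $u\mapsto\int_B u\,\Delta\varphi$ vanish on $N$: for $u=1,x,y$ this follows from $\phi(1)=\phi'(1)=0$ by integration by parts, and for $u=x^2+y^2$ one computes $\int_B(x^2+y^2)\,\Delta\varphi=4\int_B\varphi=4\pi\bigl(\phi(1)-\int_0^1 r^2\phi'(r)\,dr\bigr)=0$. Hence $J$ is bounded below and attains its minimum by the direct method. Finally, since $\Delta\varphi$ is continuous and not identically zero there is $v\in C_c^\infty(B)$ with $\int_B v\,\Delta\varphi<0$; then $J(tv)=t^2Q(v)+t\int_B v\,\Delta\varphi<0$ for small $t>0$, so $\min J<0$, and both chains of strict inequalities are complete.

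The genuine idea here is the choice of competitor: the in-plane part must be a two-dimensional curl $(u_y,-u_x)$, which makes $\u$ divergence-free up to the axial term (hence admissible also in the incompressible case) and, after the rotation $\tilde{\mathbf R}$ that turns $\mathbf f$ into $\tilde{\mathbf f}$, pairs with $\nabla\varphi$ so as to produce, after integration by parts, the planar term $\int_B u\,\Delta\varphi$ with no boundary contribution thanks to $\phi'(1)=0$. Granting that, the only technical point I expect to require care is the well-posedness of the reduced planar functional $J$, namely the coercivity of $Q$ on $H^2(B)$ modulo its kernel $N$ (an elliptic estimate resting on the null-Lagrangian identity above) together with the verification that the hypotheses \eqref{effe12} annihilate the linear term $\int_B u\,\Delta\varphi$ on $N$; this is where the assumptions on the radial profile $\phi$ enter essentially.
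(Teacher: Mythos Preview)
Your approach is essentially the same as the paper's: you both use the competitor $\u=(u_y,-u_x,w)$, carry out the identical computation of $\tilde{\mathcal G}(\u)$, and integrate by parts using $\partial_n\varphi=0$ on $\partial B$. The only differences are in justification of two technical points. For existence of a minimizer of the planar functional $J$, the paper simply observes that the class $\mathcal K=\{(u_y,-u_x,w)\}$ is weakly closed in $H^1(\Omega,\mathbb R^3)$ and invokes the already-established coercivity of $\tilde{\mathcal G}$ via Korn's inequality on the full space; your direct G\aa rding argument works too (indeed, the cleanest route is to apply the 2D Korn inequality to $\v=(u_y,-u_x)$, whose infinitesimal rigid motions correspond exactly to $u\in N$), and your verification that \eqref{effe12} annihilates the linear term on $N$ is a nice explicit addition. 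For strict negativity, the paper argues by contradiction through the Euler--Lagrange equation $4\Delta^2 u+\Delta\varphi=0$ (if the minimum were zero then $u\equiv0$ would be a minimizer, forcing $\Delta\varphi\equiv0$), whereas your scaling argument $J(tv)<0$ for small $t$ is more direct and equally valid.
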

 \begin{proof} Let us prove the first statement.
 %It is not restrictive to reduce the minimization problem to $H^1_{\dv}(\Omega,\mathbb R^3)$.
 Let $\mathcal K\subset H^1(\Omega,\mathbb R^3)$ be the class of displacement fields $\v\in H^1(\Omega,\mathbb R^3)$ of the form \begin{equation}\label{K}\v(x,y,z)=(u_y(x,y),-u_x(x,y), w(z)),\end{equation} for some $u\in H^2(\Omega)$ and some $w\in H^1(0,1)$. 
 It is easily seen that $\mathcal K$ is weakly closed in $H^1(\Omega,\mathbb R^3)$. %(\MMM proof of the claim to be included\KKK).
 In particular, there are minimizers of $\tilde{\mathcal G}$ over $\mathcal K$.
 %and then in order to obtain the result it will be enough to show that $\min_{\mathcal K}\tilde{\mathcal F^I}<0$. 
 %Moreover, since $\tilde {\mathcal F}^I(\mathbf 0)=0$, it is enough to show that $\min_{\mathcal K}\tilde{\mathcal F^I}\neq0$.
  We notice that by means of \eqref{K} any couple $u\in H^2(B)$, $w\in H^1(0,1)$ uniquely determines $\v\in \mathcal K$. Conversely,    any $\v\in \mathcal K$ uniquely determines $u\in H^2(B)$, up to an additive constant, and $w\in H^1(0,1)$. Moreover a computation shows that the energy functional $\tilde{\mathcal G}$ takes the following form for any $\v\in\mathcal K$:
 \begin{equation*}\label{form}\begin{aligned}
 \tilde{\mathcal G}(\v)&=\tilde{\mathcal G}((u_y,-u_x,w))=\int_{B}8u_{xy}^2+2(u_{yy}-u_{xx})^2-\int_B (u_y,-u_x)\cdot(\varphi_y,-\varphi_x)\\&
 \qquad+4\pi\int_0^1 w'^{\,2}\,dz-\pi\int_{0}^1 w\psi\,dz\\&
 =\int_{B}8u_{xy}^2+2(u_{yy}-u_{xx})^2+\int_B u \Delta\varphi+4\pi\int_0^1 w'^{\,2}\,dz-\pi\int_{0}^1 w\psi\,dz. \end{aligned}
 \end{equation*}
   having exploited the fact that $\nabla\varphi\cdot \mathbf n=0$ on $\partial B$, where $\mathbf n$ is the outer unit normal to $\partial B$. We deduce
 \begin{equation}\label{comparison}\begin{aligned}
 \min_{H^1(\Omega,\mathbb R^3)}\tilde{\mathcal G}\le \min_{\mathcal K}\tilde{\mathcal G}&=\min_{u\in H^2(B)} \int_{B}8u_{xy}^2+2(u_{yy}-u_{xx})^2+\int_B u \Delta\varphi \\&\qquad+\min_{w\in H^1(0,1)} 4\pi\int_0^1 w'^{\,2}-\pi\int_{0}^1 w\psi,\end{aligned}
 \end{equation}
 where the last minimization problem in \eqref{comparison} has a solution as well, thanks to $\int_\Omega\psi=0$ from \eqref{effe22}  and to Poincar\'e inequality.
  
 If we introduce the perturbation $u+\eps\zeta$, where $\eps>0$ and $\zeta\in H^2_0(B)$, we get the first order optimality condition for the first minimization problem in the right hand side of \eqref{comparison}
 \begin{equation*}\label{variation}
 4\int_B 4u_{xy}\zeta_{xy}+u_{yy}\zeta_{yy}+u_{xx}\zeta_{xx}-u_{yy}\zeta_{xx}-u_{yy}\zeta_{xx}=-\int_B\zeta \Delta\varphi, 
 \end{equation*}
 and after integration by parts we obtain the Euler-Lagrange equation $$4\Delta^2 u+\Delta\varphi=0\qquad\mbox{ in $\mathcal D'(B)$},$$
 where $\Delta^2$ denotes the planar biharmonic operator.
 
 In view of \eqref{comparison}, and since $\tilde {\mathcal G}(\mathbf 0)=0$, in order to conclude it is enough to show that $\min_{\mathcal K}\tilde{\mathcal G}\neq0$.
% If $u\in H^2_0(B)$ provides a minimizer, 
% choosing $\zeta=u$ in \eqref{variation} entails
% \[
% \int_{B}8 u_{xy}^2+2( u_{yy}- u_{xx})^2=-\frac12\int_B u\Delta\varphi.
% \]
 Assume by contradiction 
 %that $\Delta\varphi\equiv0$ in $B$ and
  that $\min_{\mathcal K}\tilde{\mathcal G}=0$: then $u\equiv 0$ on $B$, $w\equiv 0$ on $(0,1)$ are  solutions  to the minimization problems in the right hand side of \eqref{comparison}, 
  %Then,  $ u_{xy}=0$ a.e. in $B$ and that $u_{xx}=u_{yy}$ a.e. in $B$, thus $ u(x,y)=C(x)+D(y)$, where $C,D$ are polynomials of degree two. 
  so that $\Delta^2u\equiv0$ in $B$ and from above the Euler-Lagrange equation we deduce $\Delta\varphi\equiv 0$ in $B$. This is a contradiction with assumption \eqref{effe12}.

In order to prove the second statement, we consider the subset $\mathcal K_{\dv}$ of $H^1_{\dv}(\Omega,\mathbb R^3)$ made of vector fields of the form $\v(x,y,z)=(u_y(x,y), -u_x(x,y),0)$ for some $u\in H^2(B)$, and for any $\v\in \mathcal K_{\dv}$ the energy functional $\tilde{\mathcal G}^I$ has the expression
 \begin{equation*}\begin{aligned}
 \tilde{\mathcal G}^I(\v)&=\tilde{\mathcal G}^I((u_y,-u_x,0))
 =\int_{B}8u_{xy}^2+2(u_{yy}-u_{xx})^2+\int_B u \Delta\varphi, \end{aligned}
 \end{equation*}
so that
 \begin{equation*}\begin{aligned}
 \min_{H^1(\Omega,\mathbb R^3)}\tilde{\mathcal G}^I\le \min_{\mathcal K_{\dv}}\tilde{\mathcal G}^I&=\min_{u\in H^2(B)} \int_{B}8u_{xy}^2+2(u_{yy}-u_{xx})^2+\int_B u \Delta\varphi %\\&\qquad+\min_{w\in H^1(0,1)} 4\pi\int_0^1 w'^{\,2}-\pi\int_{0}^1 w\psi,
 \end{aligned}
 \end{equation*}
and the proof concludes 
with the same argument as above.
 \end{proof}
 
 We proceed to the proof of Theorem \ref{mainth3}.
%  \begin{theorem}
%  
%  %Suppose that $\Delta\varphi$ is not identically zero on $B$. 
%  % Assume \eqref{effe1} and \eqref{effe2}. 
%  Assume \eqref{cyl}, \eqref{ef}, \eqref{Venant}, \eqref{effe1}, \eqref{effe2}.
%Then there holds $$\min_{H^1(\om,\R^3)}\mathcal G\le \min_{H^1(\Omega,\mathbb R^3)}\tilde{\mathcal G}<\min_{H^1(\Omega,\mathbb R^3)}{\mathcal E},$$
%%There holds $\tilde{\mathcal F}^I\ge0$ for every $H^1(\Omega;\mathbb R^d)$  with equality if and only if $\mathbb E(\v)=0$.
%%Moreover, if $\psi\equiv0$ on $(0,1)$, there holds
%and \MMM if $\|\psi\|_{L^2(\om)}$ is small enough, there also holds \KKK
%$$\min_{H^1(\om,\R^3)}{\mathcal G}^I\le\min_{H^1(\Omega,\mathbb R^3)}\tilde{\mathcal G}^I< \min_{H^1(\Omega,\mathbb R^3)}\mathcal E^I.$$
% \end{theorem}
 \begin{proofad3}
%  Assume \eqref{cyl}, \eqref{ef}, \eqref{Venant}, \eqref{effe1}, \eqref{effe2}.
We will prove that there holds $$\min_{H^1(\om,\R^3)}\mathcal G\le \min_{H^1(\Omega,\mathbb R^3)}\tilde{\mathcal G}<\min_{H^1(\Omega,\mathbb R^3)}{\mathcal E},$$
%There holds $\tilde{\mathcal F}^I\ge0$ for every $H^1(\Omega;\mathbb R^d)$  with equality if and only if $\mathbb E(\v)=0$.
%Moreover, if $\psi\equiv0$ on $(0,1)$, there holds
and that if $\|\psi\|_{L^2(\om)}$ is small enough there also holds \KKK
$$\min_{H^1(\om,\R^3)}{\mathcal G}^I\le\min_{H^1(\Omega,\mathbb R^3)}\tilde{\mathcal G}^I< \min_{H^1(\Omega,\mathbb R^3)}\mathcal E^I.$$

 We start by noticing that the first inequality in both statements is trivial, due to \eqref{inthekernel}. Therefore, we are left to prove the second inequality in both cases.
 
 %Let us prove the s first statement.
Let $\v=(v_1,v_2,v_3)\in H^1(\Omega,\mathbb R^3)$. Let $\tilde \v:=(v_1,v_2)$, so that $\tilde \v\in H^1(\Omega,\mathbb R^2)$.   Let  $\tilde \u\in H^1(B,\mathbb R^2)$ be defined by $\tilde\u(x,y):=\int_0^1 \tilde\v(x,y,z)\,dz$. Let moreover $\tilde w\in H^1(0,1)$ be defined by $\tilde w(z):=\tfrac1\pi\int_B v_3(x,y,z)\,dx\,dy$. Since $|\mathbb E(\v)|\ge |\widetilde{\mathbb E}(\v)|=|\widetilde{\mathbb E}(\tilde \v)|$, where $\widetilde{\mathbb E}(\cdot)$ is the upper-left $2\times2$ submatrix of $\mathbb E(\cdot)$, by Jensen inequality we have \begin{equation}\label{long}\begin{aligned}
\mathcal E^I(\v)&\ge\mathcal E(\v)\ge 4\int_\Omega|\mathbb E(\v)|^2-\int_\Omega (\varphi_x,\varphi_y,\psi)\cdot\v\\&\ge
4\int_B\left|\int_0^1\widetilde{\mathbb E}(\tilde \v)\,dz\right|^2\,dx\,dy
%+2\int_\Omega(v_{1,z}+v_{3,x})^2+(v_{2,z}+v_{3,y})^2
+4\pi\int_0^1\left|\frac1\pi\int_B v_{3,z}\,dx\,dy\right|^2\,dz\\&\qquad
-\int_B (\varphi_x,\varphi_y)\cdot\left(\int_0^1\tilde \v\,dz\right)\,dx\,dy-\int_0^1\psi(z)\left(\int_B v_3(x,y,z)\,dx\,dy\right)\,dz
\\&\ge\mathcal J(\tilde\u)+4\pi\int_0^1\tilde w'{\,^2}-\pi\int_0^1 \tilde w\psi,
\end{aligned}\end{equation}
where we have used the notation  $v_{i,a}:=\partial_av_i$, $i\in\{1,2,3\}$, $a\in\{x,y,z\}$,
and where
\begin{equation}\label{Jey}{\mathcal J(\widetilde \u)}:=4\int_B|\widetilde{\mathbb E}(\tilde \u)|^2-\int_B\nabla\varphi\cdot\tilde \u.\end{equation}

 We claim that functional $\mathcal J$ admits a minimizer over $H^1(B,\mathbb R^2)$ which is the gradient of a $H^2(B)$ function.
% It is worth noticing that a radial minimizer of $\mathcal J$ over $H^1(B;\mathbb R^2)$ exists 
Indeed, thanks to \eqref{effe12} and Korn inequality, a minimizer exists and it is unique up to  planar  infinitesimal rigid displacements. Moreover, thanks to a first variation argument, it is solution
  to the boundary value problem 
\begin{equation}\label{elasticproblem}
\left\{\begin{array}{ll}-8\dv\widetilde{\mathbb{E}}(\tilde\u)=\nabla\varphi\qquad&\mbox{in $B$}\\
\widetilde{\mathbb E}(\tilde \u)\mathbf n=0\qquad&\mbox{on $\partial B$}.
\end{array}\right.
\end{equation}
We claim that problem \eqref{elasticproblem} admits a solution
 $\tilde \u\in H^1(B,\mathbb R^2)$  of the form $\tilde\u(r) := r^{-1}\eta(r)(x,y)$, where $  r:=\sqrt{x^2+y^2}$ (since $\tilde \u\in H^1(B,\mathbb R^2)$, we necessarily have $\eta(0)=0$).
 Indeed, a direct calculation shows that $\tilde\u(r)= r^{-1}\eta(r)(x,y)$ is a $H^1(B,\mathbb R^2)$ solution to  problem \eqref{elasticproblem} if and only if %(L'HO CONTROLLATA)
\begin{equation}\label{ode}\left\{\begin{array}{ll}r^2\eta''+r\eta'-\eta=-\frac18\,r^2\phi'(r)\qquad &\mbox{in $(0,1)$}\\
\eta(0)=0\\
\eta'(1)=0\\
\end{array}\right. 
\end{equation}
where $\phi$ is the radial profile of $\varphi$. Problem \eqref{ode} has indeed a solution, whose explicit form is
\begin{equation}\label{etastar}
\eta_*(r)=-\frac1{16}\,r\phi(r)+\frac1{16r}\int_0^r t^2\phi'(t)\,dt.
\end{equation}
Therefore, letting $\Phi:B\to\mathbb R$ be the radial function defined by $\Phi(x,y):=\int_0^r\eta_*(t)\,dt$,  we obtain $\Phi\in H^2(B)$ and moreover $\nabla \Phi=r^{-1}\eta(r)(x,y)$ solves \eqref{elasticproblem}, hence it minimizes $\mathcal J$ over $H^1(\Omega,\mathbb R^2)$. The claim is proved.

\KKK

%Then by the Helmoltz-Hodge decomposition of $\tilde \u\in H^1(B,\mathbb R^2)$ (see for instance \cite{GR}) there exists a radial function $\Phi\in H^2(B)$ and  a radial divergence-free vector field $\w\in H^1(B)$ such that  $\tilde \u=\nabla \Phi+\w$. By radiality an explicit computation shows that  $$\int_B \widetilde{\mathbb E}(\nabla \Phi)\cdot \widetilde{\mathbb E}(\w)=\int_B D^2\Phi\cdot \widetilde{\mathbb E}(\w)=0.$$ 
%Indeed, by taking into account that $\Phi\in H^2(B)$ is a radial function we get
%\begin{equation*}
%D^2\Phi= r^{-3}\Phi'(r) 
%\left(\begin{array}{ccc}y^2&-xy\\-xy&x^2\end{array}\right)+ r^{-2}\Phi''(r)\left(\begin{array}{ccc}x^2&xy\\xy&y^2\end{array}\right).
%\end{equation*}
%and since $\w$ is a radial divergence-free vector field then there exists a radial function $\psi\in H^2(B)$ such that 
%$\w=r^{-1}\psi'(r)(y,-x)$ hence
%\begin{equation*}
%\mathbb E(\w)= r^{-3}(r\psi''-\psi')\left(\begin{array}{ccc}yx&0\\ 0&yx\end{array}\right)
%\end{equation*}
%and by symmetry
%\begin{equation*}
%\int_B D^2\Phi\cdot \widetilde{\mathbb E}(\w)=\int_B (r^{-6}\Phi'-r^{-5}\Phi'' )(r\psi''-\psi')(xy^3-x^3y)\,dxdy=0
%\end{equation*}
%as claimed.

 Therefore, the  estimate \eqref{long} rewrites as
 \[\begin{aligned}
\mathcal E^I(\v)\ge\mathcal E(\v)&\ge \mathcal J(\tilde\u)
%4\int_B|\widetilde{\mathbb E}(\nabla\Phi)|^2-\int_B\nabla\Phi\cdot\nabla\varphi+4\int_B|\widetilde{\mathbb E}(\w)|^2-\int_B\nabla\varphi\cdot\w\\&\qquad
+4\pi\int_0^1\tilde w'{\,^2}-\pi\int_0^1 \tilde w\psi\\
&\ge \min_{\Phi\in H^2(B)} 4\int_B|\widetilde{\mathbb E}(\nabla\Phi)|^2-\int_B\nabla\Phi\cdot\nabla\varphi+4\pi\int_0^1\tilde w'{\,^2}-\pi\int_0^1 \tilde w\psi.
\end{aligned}\]
%\[\begin{aligned}
%\mathcal F^I(\v)\ge\mathcal F(\v)&\ge
%4\int_B|\widetilde{\mathbb E}(\nabla\Phi)|^2-\int_B\nabla\Phi\cdot\nabla\varphi+4\int_B|\widetilde{\mathbb E}(\w)|^2-\int_B\nabla\varphi\cdot\w\\&\qquad+4\pi\int_0^1\tilde v_{3,z}^2-\pi\int_0^1 \tilde v_3\psi.
%\end{aligned}\]
%Integrating by parts, since $\varphi=\w\cdot \mathbf n=0$ on $\partial B$ and since $\w$ is divergence-free, we get
Integrating by parts, since \eqref{effe12} yields $\phi=0$ on $\partial B$, we get
\[
\mathcal E^I(\v)\ge\mathcal E(\v)\ge \min_{\Phi\in H^2(B)}4\int_B|D^2\Phi|^2+\int_B\Phi\,\Delta\varphi+4\pi\int_0^1\tilde w'{\,^2}-\pi\int_0^1 \tilde w\psi,
\] \KKK
where $D^2$ denotes the Hessian in the $x,y$ variables,
hence \begin{equation}\label{hence}\begin{aligned}\min_{H^1(\Omega,\mathbb R^3)}\mathcal E^I\ge \min_{H^1(\Omega,\mathbb R^3)}\mathcal E&\ge 
%\min_{\Phi\in H^1(B,\mathbb R^2)}4\int_B|D^2\Phi|^2+\int_B\Phi\,\Delta\varphi=
 \min_{\Phi\in H^2(B)}\int_B 8\Phi_{xy}^2+4\Phi_{xx}^2+4\Phi_{yy}^2+\int_B\Phi\Delta\varphi\\&\qquad+\min_{w\in H^1(0,1)}4\pi\int_0^1w'^{\,2}-\pi\int_\Omega w\psi.
 \end{aligned}\end{equation}\KKK

Suppose that $\tilde\Phi\in H^2(B)$ is a solution to the first minimization problem on the right hand side of \eqref{hence}.
We have already proven that $\nabla\tilde\Phi$ solves \eqref{elasticproblem}, and taking the divergence therein shows that $\tilde\Phi$ solves the biharmonic equation
%Since 
%First variation for the first minimization problem on the right hand side entails $
$8\Delta^2\Phi=-\Delta\varphi$ in $B$.  \KKK  %Suppose that $\tilde\Phi\in H^2(B)$ is a solution to such minimization problem. 
As \eqref{effe12} requires that $\Delta\varphi$ is not identically zero on $B$, we deduce that $\Delta\tilde \Phi$ is not identically zero as well.
This implies by Young inequality 
\begin{equation}\label{young}
\int_B 8\tilde \Phi_{xy}^2+2(\tilde \Phi_{yy}-\tilde\Phi_{xx})^2+\int_B\tilde\Phi\Delta\varphi<\int_B  8\tilde\Phi_{xy}^2+4\tilde\Phi_{xx}^2+4\tilde\Phi_{yy}^2-\int_B\tilde\Phi\Delta\varphi.
\end{equation}
Notice that the inequality is strict, since the Young inequality $2(\tilde \Phi_{yy}-\tilde\Phi_{xx})^2\le 4\tilde\Phi_{xx}^2+4\tilde\Phi_{yy}^2$ holds with equality if and only if $\tilde \Phi_{xx}=-\Tilde \Phi_{yy}$, and we have just checked that $\Delta\tilde \Phi$ does not vanish identically on $B$. In particular, $2(\tilde \Phi_{yy}-\tilde\Phi_{xx})^2< 4\tilde\Phi_{xx}^2+4\tilde\Phi_{yy}^2$ on a set of positive measure in $B$.
%But as seen in the proof of the previous lemma, we also have
%\[
%\min_{H^1(\Omega,\mathbb R^3)\tilde{\mathcal F}^I}\le \min_{\u\in H^2(B,\mathbb R^2)}\int_{B}8\u_{xy}^2+2(\u_{yy}-\u_{xx})^2+\int_B\u\Delta\varphi.
%\]
 From Lemma \ref{lemma21}, from \eqref{young} and \eqref{hence} we infer 
\[\begin{aligned}\min_{H^1(\Omega,\mathbb R^3)}\tilde{\mathcal G}&
%\le\min_{H^1(\Omega,\mathbb R^3)}\tilde{\mathcal F}^I
\le \min_{u \in H^2(B)} \int_B 8u_{xy}^2+2(u_{yy}-u_{xx})^2+\int_Bu\Delta\varphi+\min_{w\in H^1(0,1)} 4\pi\int_0^1 w'^{\,2}-\pi\int_{0}^1 w\psi\\&
\le \int_B 8\tilde\Phi_{xy}^2+2(\tilde\Phi_{yy}-\tilde\Phi_{xx})^2+\int_B\tilde\Phi\Delta\varphi+\min_{w\in H^1(0,1)} 4\pi\int_0^1 w'^{\,2}-\pi\int_{0}^1 w\psi\\&< \int_B  8\tilde\Phi_{xy}^2+4\tilde\Phi_{xx}^2+4\tilde\Phi_{yy}^2+\int_B\tilde\Phi\Delta\varphi+\min_{w\in H^1(0,1)} 4\pi\int_0^1 w'^{\,2}-\pi\int_{0}^1 w\psi\\&
= \min_{\Phi\in H^2(B)}\int_B  8\Phi_{xy}^2+4\Phi_{xx}^2+4\Phi_{yy}^2+\int_B\Phi\Delta\varphi+\min_{w\in H^1(0,1)} 4\pi\int_0^1 w'^{\,2}-\pi\int_{0}^1 w\psi\\&\le\min_{ H^1(\Omega,\mathbb R^3 )}\mathcal E
\le\min_{ H^1(\Omega,\mathbb R^3 )}\mathcal E^I
\end{aligned}\]
thus concluding the proof of the first statement.

%In order to prove the second statement, we notice that if $\psi\equiv0$ from \eqref{hence} we get
%\begin{equation*}\begin{aligned}\min_{H^1(\Omega,\mathbb R^3)}\mathcal F^I&\ge 
%%\min_{\Phi\in H^1(B,\mathbb R^2)}4\int_B|D^2\Phi|^2+\int_B\Phi\,\Delta\varphi=
% \min_{\Phi\in H^2(B)}\int_B 8\Phi_{xy}^2+4\Phi_{xx}^2+4\Phi_{yy}^2+\int_B\Phi\Delta\varphi,
% \end{aligned}\end{equation*}
%which together with \eqref{young}
%and Lemma \ref{lemma21} yields the conclusion in the same way.
%

Let us now prove the second statement, concerning the incompressible case.{
%{\MMM IL RESTO L'HO CAMBIATO UN PO'\\
%Let $\varphi, \psi$ defined as before,
\KKK
Let
$$C(\varphi,\psi):=\|\psi\|_{L^2(\om)}\left (\int_\om |\nabla\varphi|^2+|\psi|^2\right )^{1/2}.$$
%and assume that $\tilde \Phi$ is a solution to 
%$$ \min_{\Phi\in H^2(B)}\int_B 8\Phi_{xy}^2+4\Phi_{xx}^2+4\Phi_{yy}^2+ |\Delta\Phi|^2+\int_B\Phi\Delta\varphi.$$
%Since $\Delta \varphi\not\equiv 0$ then $\eps_0:= \min\{C^{-1}\int_B |\Delta\tilde\Phi|^2,1\}> 0$ and we may define
%$$\mathbf f_0(x,y,z):=(\varphi_{x}(x,y), \varphi_y(x,y),\eps_0\psi(z)),$$ 
% $$\tilde {\mathbf f}_0 (x,y,z):=(\varphi_y(x,y),-\varphi_x(x,y),\eps_0 \psi(z)).$$
 If $\v^*=(v_1^*,v_2^*,v_3^*)$ minimizes ${\mathcal E}^I$ over $H^1(\om,\R^3)$,
 then we have 
 \[
0= \frac{d}{d\eps}{\Bigg|}_{\eps=0}\mathcal E^I((1+\eps)\v^*)=8\int_\Omega |\mathbb E(\v^*)|^2-\mathcal L(\v^*),
 \]
  and then by applying  \eqref{globalequi} and H\"older inequality
  \[
  8\int_\om|\mathbb E(\v^*)|^2\,dx=\mathcal L(\v^*)=\mathcal L(\v^*-\mathbb P\v^*)\le \|\mathbf f\|_{L^{6/5}(\om,\mathbb R^3)}\|\v^*-\mathbb P\v^*\|_{L^6(\om,\R^3)}.
  \]
  By \eqref{kornvera} and H\"older inequality again we deduce therefore
\begin{equation}\lab{estmin}
\left (\int_\om |\mathbb E(\v^*)|^2\right )^{1/2}\le K_1\|\mathbf f\|_{L^2(\om,\R^3)}\le K_1 \left (\int_\om |\nabla\varphi|^2+|\psi|^2\right )^{1/2}
\end{equation}
for some suitable constant $K_1$ (only depending on $\Omega$).
 %If $\mathbb P\v^*$ denotes the projection of $\v^*$ onto rigid infinitesimal displacements, 
 By taking into account \eqref{estmin}, still by Korn and H\"older inequality  and by \eqref{effe22} we get
\begin{equation}\lab{vertlav}\begin{aligned}
 \left |\int_\om \psi v^*_3\right |&= \left |\int_\om \psi (\v^*-\mathbb P\v^*)_3\right |\le \|\psi\|_{L^2(\om)}\|\v^*-\mathbb P\v^*\|_{L^2(\om)}\\&\le  K_2 \|\psi\|_{L^2(\om)}  \|\mathbb E(\v^*)\|_{L^2(\om)}\le  K\,C(\varphi,\psi),\end{aligned}
\end{equation}
where $ K_2$ is  another constant  that depends only on $\Omega$ and $K=K_1K_2$.
By taking into account %that $\dv \v^*=0$ (so that $v^*_{3,z}=-v^*_{1,x}-v^*_{2,y}$) and
 \eqref{vertlav}, we get
\begin{equation*}\begin{aligned}
\min_{H^1(\Omega,\mathbb R^3)}\mathcal E^I\ge 4\int_\om |v^*_{1,x}|^2+|v^*_{2,y}|^2+ \frac{1}{2}(v^*_{1,y}+v^*_{2,x})^2
%\MMM+|v^*_{1,1}+v^*_{2,2}|^2\KKK\\
 -\int_\om (v^*_1\varphi_x+v^*_2\varphi_y)- KC(\varphi,\psi)
  \end{aligned}\end{equation*}
 and by Jensen inequality
 \begin{equation*}\begin{aligned}
\min_{H^1(\Omega,\mathbb R^3)}\mathcal E^I&\ge 4\int_B (|\tilde v^*_{1,x}|^2+|\tilde v^*_{2,y}|^2+ \frac{1}{2}(\tilde v^*_{1,y}+\tilde v^*_{2,x})^2%\MMM+|\tilde v^*_{1,1}+\tilde v^*_{2,2}|^2 \KKK
 -\int_B (\tilde v^*_1\varphi_x+\tilde v^*_2\varphi_y)- KC(\varphi,\psi) \\&=\mathcal J(\tilde\v^*)-KC(\varphi,\psi)
 \end{aligned}\end{equation*}
where we have set $\tilde\v^*(x,y):= \int_0^1\v^*(x,y,z)\,dz$ and where $\mathcal J$ is defined by \eqref{Jey}. By  repeating the argument used in the compressible case (and letting $\tilde\Phi$ be defined in the same way)  we obtain 
\begin{equation}\label{lab}\begin{aligned}
\min_{H^1(\Omega,\mathbb R^3)}\mathcal E^I&\ge \left(\min_{\Phi\in H^2(B)}\int_B 8\Phi_{xy}^2+4\Phi_{xx}^2+4\Phi_{yy}^2
%\MMM+|\Delta \Phi|^2\KKK 
+\int_B\Phi\Delta\varphi\right) -KC(\varphi,\psi)\\
&=
\int_B 8\tilde\Phi_{xy}^2+4\tilde\Phi_{xx}^2+4\tilde\Phi_{yy}^2 +\int_B\tilde\Phi\Delta\varphi -KC(\varphi,\psi)\\
&=\int_B 8\tilde\Phi_{xy}^2+2(\tilde\Phi_{yy}-\tilde\Phi_{xx})^2+2|\Delta\tilde\Phi|^2+\int_B \tilde\Phi\Delta\varphi- KC(\varphi,\psi).
\KKK
\end{aligned}
\end{equation}
By \eqref{lab} and by the second statement  of Lemma \ref{lemma21} we deduce
\[
\min_{H^1(\Omega,\mathbb R^3)}{\mathcal E}^I\ge\int_B 2|\Delta\tilde\Phi|^2- KC(\varphi,\psi)+\min_{H^1(\Omega,\mathbb R^3)}\tilde{\mathcal G}^I.
\]
% Therefore by recalling  $\eps_0=\min\{C^{-1}\int_B |\Delta\tilde\Phi|^2,1\}$ we have
%\begin{equation} \begin{array}{ll}
%&\displaystyle \min_{H^1(\Omega,\mathbb R^3)}{\mathcal F}^I\ge \int_B  8\tilde\Phi_{xy}^2+4\tilde\Phi_{xx}^2+4\tilde\Phi_{yy}^2+|\Delta\tilde\Phi|^2+\int_B\tilde\Phi\Delta\varphi-C\eps_0=\\
%&\\
%&=\displaystyle
%\int_B 8\tilde\Phi_{xy}^2+2(\tilde\Phi_{yy}-\tilde\Phi_{xx})^2+2|\Delta\tilde\Phi|^2+\int_B \tilde\Phi\Delta\varphi- C\eps_0\ge\\
%&\\
%&\displaystyle \ge \min_{H^1(\Omega,\mathbb R^3)}\tilde{\mathcal F}^I +\int_B 2|\Delta\tilde\Phi|^2- C\eps_0 > \min_{H^1(\Omega,\mathbb R^3)}\tilde{\mathcal F}^I 
%\end{array}
%\end{equation}
Since $\int_B|\Delta\tilde\Phi|^2>0$ as previously observed,  we may choose $\|\psi\|_{L^2(\Omega)}$ so small  that $KC(\varphi,\psi)<\int_B2|\Delta\tilde\Phi|^2$ and deduce $$\min_{H^1(\Omega,\mathbb R^3)}{\mathcal E}^I>\min_{H^1(\Omega,\mathbb R^3)}\tilde{\mathcal G}^I$$
thus completing the proof.}
 \end{proofad3}

 We conclude by providing some more properties for the compressible case that are directly deduced by refining the arguments in the proof of Theorem \ref{mainth3}, under the further assumption $\int_0^1z\psi(z)\,dz>0$.
First we check that minimizers of the limit functional $\mathcal G$ are not unique up to infinitesimal rigid displacements, in the sense that there are two minimizers whose difference is not an   infinitesimal rigid displacement. In a second statement we obtain a solution of  the problem $\min\mathcal G$, thus showing more explicitly the gap between $\min\mathcal G$ and $\min\mathcal E$.
 
\begin{proposition}\label{pro52} Assume \eqref{cyl2}, \eqref{ef2}, \eqref{Venant3}, \eqref{effe12}, \eqref{effe22} and  $\int_0^1z\psi(z)\,dz>0$. If $\u\in\mathrm{argmin}_{H^1(\om,\R^3)}\mathcal G$, then $\u$ 
is of the form $\u=\w^0+\w$, where $\mathbb E(\w^0)\equiv0$ and $\w=(w_1,w_2,w_3)$ is such that $w_1,w_2$ do not depend on $z$ and $w_3$ does not depend on $x,y$. Moreover, minimizers of $\mathcal G$ over $H^1(\om,\R^3)$
are not unique up to infinitesimal rigid displacements. 
%The same properties hold for functional $\mathcal G^I$.
 \end{proposition}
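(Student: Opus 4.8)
The plan is to prove the structural statement first, by an averaging argument, and then to read off the non‑uniqueness from an evenness property of a two–dimensional reduced functional.

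\emph{Structure of minimizers.} Given $\u=(u_1,u_2,u_3)\in H^1(\om,\R^3)$, I would introduce the averaged field $\bar\u:=(\bar u_1,\bar u_2,\bar u_3)$, where $\bar u_i(x,y):=\int_0^1 u_i(x,y,z)\,dz$ for $i=1,2$ and $\bar u_3(z):=\tfrac{1}{\pi}\int_B u_3(x,y,z)\,dx\,dy$. Since $\mathbf f$ has the product form \eqref{ef2} and $\Omega=B\times(0,1)$, each term of $\mathcal L(\mathbf R_\theta\u)$ factors so that it depends on $\u$ only through $\bar u_1,\bar u_2,\bar u_3$; hence $\mathcal L(\mathbf R_\theta\u)=\mathcal L(\mathbf R_\theta\bar\u)$ for every $\theta$, and, since here $\mathcal S^0_{\mathcal L}=\{\mathbf R_\theta:\theta\in[-\pi,\pi]\}$ by \eqref{S0Lcontrex}, also $\max_{\mathbf R\in\mathcal S^0_{\mathcal L}}\mathcal L(\mathbf R\u)=\max_{\mathbf R\in\mathcal S^0_{\mathcal L}}\mathcal L(\mathbf R\bar\u)$. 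On the other hand $\mathbb E(\bar\u)$ has vanishing $13$– and $23$–entries, and a componentwise Jensen inequality in the $z$– resp. $(x,y)$–variables gives $\int_\om|\mathbb E(\bar\u)|^2\le\int_\om|\mathbb E(\u)|^2$. Therefore $\mathcal G(\bar\u)\le\mathcal G(\u)$. If $\u$ is a minimizer, equality holds throughout, which forces equality in each Jensen inequality and $\mathbb E(\u)_{13}=\mathbb E(\u)_{23}=0$ a.e.; equivalently $\partial_z u_1=-\partial_x u_3$, $\partial_z u_2=-\partial_y u_3$, $\partial_z u_3$ depends only on $z$, while $\partial_x u_1$, $\partial_y u_2$ and $\partial_y u_1+\partial_x u_2$ depend only on $(x,y)$. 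Integrating in $z$ yields $u_3=d(x,y)+\Gamma(z)$, then $u_1=a(x,y)-z\,\partial_x d$ and $u_2=b(x,y)-z\,\partial_y d$, and the three remaining conditions give $D^2 d\equiv0$, so that $d$ is affine, $d=\alpha x+\beta y+\delta$. Setting $\w^0:=(-\alpha z,-\beta z,\alpha x+\beta y+\delta)$, whose gradient is skew so that $\mathbb E(\w^0)\equiv0$, one obtains $\u=\w^0+\w$ with $\w=(a(x,y),b(x,y),\Gamma(z))$, which is the asserted form.

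\emph{The reduced functionals.} Since $\mathcal G$ is invariant under the addition of infinitesimal rigid displacements, $\w$ is again a minimizer. Inserting a field $\w=(w_1(x,y),w_2(x,y),w_3(z))$ of this special form into $\mathcal G$, and using $\Omega=B\times(0,1)$, \eqref{S0Lcontrex} and the identity $\max_{\theta\in[-\pi,\pi]}(A\cos\theta+B\sin\theta)=\sqrt{A^2+B^2}$, I would obtain the exact splitting
$$\mathcal G(\w)=\mathcal H(w_1,w_2)+\mathcal H_1(w_3),$$
where $\mathcal H(\tilde v):=4\int_B|\widetilde{\mathbb E}(\tilde v)|^2-\sqrt{A(\tilde v)^2+B(\tilde v)^2}$ with $A(\tilde v):=\int_B\nabla\varphi\cdot\tilde v$, $B(\tilde v):=\int_B(\varphi_x v_2-\varphi_y v_1)$, and $\mathcal H_1(v):=4\pi\int_0^1 (v')^2-\pi\int_0^1\psi v$. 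The two problems are independent and each admits a minimizer by the direct method (for $\mathcal H_1$ one uses $\int_0^1\psi=0$ from \eqref{effe22} and the Poincaré inequality), hence $\min\mathcal G=\min\mathcal H+\min\mathcal H_1$ and $(w_1,w_2)$ minimizes $\mathcal H$, $w_3$ minimizes $\mathcal H_1$. Moreover $\min\mathcal H<0$: for $\tilde v=\eps\nabla\varphi$ with $\eps>0$ small one has $A(\nabla\varphi)=\int_B|\nabla\varphi|^2>0$ (as $\varphi$ is non‑constant by \eqref{effe12}), so $\mathcal H(\eps\nabla\varphi)=O(\eps^2)-\eps\sqrt{A(\nabla\varphi)^2+B(\nabla\varphi)^2}<0$. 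Consequently $\widetilde{\mathbb E}(w_1,w_2)\not\equiv0$: otherwise $(w_1,w_2)$ would be a planar infinitesimal rigid displacement, on which both $A$ and $B$ vanish — here I would use the radiality of $\varphi$ (giving $\int_B\varphi_x=\int_B\varphi_y=\int_B(\varphi_x y-\varphi_y x)=0$) together with $\int_0^1 r^2\phi'(r)\,dr=0$ from \eqref{effe12} (giving $\int_B(\varphi_x x+\varphi_y y)=0$) — so that $\mathcal H(w_1,w_2)=0>\min\mathcal H$, a contradiction.

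\emph{Evenness and non‑uniqueness.} Since $\widetilde{\mathbb E}(-\tilde v)=-\widetilde{\mathbb E}(\tilde v)$ and $A(-\tilde v)=-A(\tilde v)$, $B(-\tilde v)=-B(\tilde v)$, the functional $\mathcal H$ is even; hence $(-w_1,-w_2)$ also minimizes $\mathcal H$, and $\u':=(-w_1,-w_2,w_3)$ satisfies $\mathcal G(\u')=\mathcal H(-w_1,-w_2)+\mathcal H_1(w_3)=\min\mathcal G$, so $\u'$ is again a minimizer of $\mathcal G$ over $H^1(\om,\R^3)$. Finally $\w-\u'=(2w_1,2w_2,0)$ cannot be an infinitesimal rigid displacement: one that is $z$–independent with vanishing third component is necessarily of the form $(ay+c_1,-ax+c_2,0)$, and this would make $(w_1,w_2)$ a planar infinitesimal rigid displacement, contradicting $\widetilde{\mathbb E}(w_1,w_2)\not\equiv0$. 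Thus $\w$ and $\u'$ are two minimizers of $\mathcal G$ whose difference is not an infinitesimal rigid displacement, which proves non‑uniqueness. I expect the main obstacle to be the first step: extracting the complete rigid structure of a minimizer from the equality case in the componentwise averaging/Jensen inequality, and in particular pinning down the infinitesimal rigid part $\w^0$ — the integrations in $z$ are elementary but one must keep careful track of the Sobolev regularity of $a,b,d,\Gamma$ and of the affine constants $\alpha,\beta,\delta$. Once the splitting $\mathcal G(\w)=\mathcal H(w_1,w_2)+\mathcal H_1(w_3)$ and the evenness of $\mathcal H$ are in hand, the non‑uniqueness is essentially immediate.
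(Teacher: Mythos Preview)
Your proof is correct and follows the paper's argument closely. The structural part is essentially identical: both you and the paper average in $z$ (resp.\ in $(x,y)$), compare $\mathbb E(\bar\u)$ with $\mathbb E(\u)$ via componentwise Jensen inequalities, and read off from the equality case that $\mathbb E(\u)_{13}=\mathbb E(\u)_{23}\equiv0$ together with the independence of the remaining entries, which after an elementary integration yields the decomposition $\u=\w^0+\w$.

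For non-uniqueness the underlying idea is again the same sign flip $(w_1,w_2)\mapsto(-w_1,-w_2)$, but you package it a bit differently. The paper observes directly that $\hat{\mathbf R}:=\mathrm{diag}(-1,-1,1)\,\mathbf R^*\in\mathcal S^0_{\mathcal L}$, so $\mathcal L(\hat{\mathbf R}\hat\u)=\mathcal L(\mathbf R^*\u^*)$ and hence $\hat\u:=(-u_1^*,-u_2^*,u_3^*)$ is again a minimizer; it then rules out $\hat\u-\u^*$ being rigid by invoking Lemma~\ref{lemma21} (i.e.\ $\min\tilde{\mathcal G}<0$). You instead make the reduced splitting $\mathcal G(\w)=\mathcal H(w_1,w_2)+\mathcal H_1(w_3)$ explicit, note that $\mathcal H$ is even, and prove $\min\mathcal H<0$ directly with the test field $\eps\nabla\varphi$, so your argument is self-contained and does not rely on Lemma~\ref{lemma21}. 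The verification that $A$ and $B$ vanish on planar infinitesimal rigid displacements (using radiality of $\varphi$ and $\int_0^1 r^2\phi'(r)\,dr=0$) is exactly what makes $\mathcal H$ well defined modulo such displacements and is the analogue of the null-moment condition used implicitly in the paper.
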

 
 \begin{proof}
 %We provide the proof for functional $\mathcal G$, the arguments for the incompressible case being the very same.
 Suppose that  $\u=(u_1,u_2,u_3)\in H^1(\om,\mathbb R^3)$ is a minimizer for functional $\mathcal G$ and let $\bar \u:=(\bar u_1,\bar u_2,\bar u_3)$, where
 \[
 \bar u_1:=\int_0^1u_1(x,y,z)\,dz,\qquad\bar u_2:=\int_0^1 u_2(x,y,z)\,dz,\qquad \bar u_3:=\frac1\pi\int_B u_3(x,y,z)\,dx\,dy.
 \]
Since $\int_0^1z\psi(z)\,dz>0$, the set $\mathcal S_{\mathcal L}^0$ is given by \eqref{S0Lcontrex}, and then it is immediate to check that, due to the specific form of $\mathbf f$ from \eqref{ef2}, there holds
\begin{equation}\label{rsign}
\int_\om\mathbf f\cdot \mathbf R\u\,d\x=\int_\om\mathbf f\cdot \mathbf R\bar\u\,d\x\qquad\forall \ \mathbf R\in\mathcal S_{\mathcal L}^0. 
\end{equation}
Moreover, by applying Jensen inequality, similarly to the proof of Theorem \ref{mainth3}, we obtain
\begin{equation}\label{fourj}\begin{aligned}
&\int_\om u_{1,x}^2\ge\int_\om\bar u_{1,x}^2,\qquad\int_\om u_{2,y}^2\ge\int_\om\bar u_{2,y}^2,\\&\int_\om u_{3,z}^2\ge\int_\om\bar u_{3,z}^2,\qquad\int_\om (u_{1,y}+u_{2,x})^2\ge\int_\om(\bar u_{1,y}+\bar u_{2,x})^2
\end{aligned}\end{equation}
so that
\[\begin{aligned}4\int_\Omega|\mathbb E(\u)|^2\,d\x&=\int_\om \left(4u_{1,x}^2+4u_{2,y}^2+4u_{3,z}^2+(u_{1,y}+u_{2,x})^2+(u_{1,z}+u_{3,x})^2+(u_{2,z}+u_{3,y})^2\right)\\&\ge\int_\om \left(4\bar u_{1,x}^2+4\bar u_{2,y}^2+4\bar u_{3,z}^2+(\bar u_{1,y}+\bar u_{2,x})^2\right)= 4\int_\om |\mathbb E(\bar\u)|^2\,d\x.\end{aligned}\]
But  the latter inequality and the inequalities \eqref{fourj} are necessarily equalities, otherwise in view of \eqref{rsign} we would deduce $\mathcal G(\bar\u)<\mathcal G(\u)$, contradicting minimality of $\u$. This implies
\begin{equation}\label{xyz}
u_{1,z}+u_{3,x}=   u_{2,z}+u_{3,y}\equiv0
\end{equation}
along with the fact that
 $u_{3,z}$ does not depend on $x,y$, since the Jensen inequality
\[
%\int_\om u_{3,z}^2\,d\x=
\frac1\pi\int_B(u_{3,z})^2\,dx\,dy\ge \left(\frac1\pi\int_B u_{3,z}\,dx\,dy\right)^2=\bar u_{3,z}^2
\]
is strict unless $u_{3,z}$ is independent of $x,y$. Similarly, we deduce that $u_{1,x}$, $u_{2,y}$ and $u_{1,y}+u_{2,x}$ do not depend on $z$, therefore there exist functions $H=H(z)$, $T=T(x,y)$, $A=A(y,z)$, $B=B(x,y)$, $C=C(x,z)$ and $D=D(x,y)$ such that $u_1,u_2,u_3$ have the form
\begin{equation}\label{tree}\begin{aligned}
u_1(x,y,z)&=A(y,z)+B(x,y),\\
u_2(x,y,z)&=C(x,z)+D(x,y),\\
u_3(x,y,z)&=H(z)+T(x,y),
\end{aligned}
\end{equation} 
and there holds
\begin{equation}\label{AC}
\partial_z(u_{1,y}+u_{2,x})=A_{yz}(y,z)+C_{xz}(x,z)\equiv0.
\end{equation}
Taking \eqref{xyz} into account we deduce \begin{equation}\label{0=}0\equiv A_z(y,z)+T_x(x,y)=C_z(x,z)+T_y(x,y)\end{equation} so that $A_{zz}=C_{zz}=T_{xx}=T_{yy}\equiv0$
and $$0\equiv A_{yz}(y,z)+T_{xy}(x,y)=C_{xz}(x,z)+T_{xy}(x,y).$$ The latter entails, thanks to \eqref{AC}, $A_{yz}= C_{xz}= T_{xy}\equiv 0$.
We conclude that $T$ is a linear function of $x,y$, i.e., $T(x,y)=ax+by+c$ for some real constants $a,b,c$, and then from \eqref{0=} we deduce that there are functions $Q=Q(y)$ and $S=S(x)$ such that $A(y,z)=-az+Q(y)$ and $C(x,z)=-bx+S(x)$. Substituting in \eqref{tree} we have
\begin{equation*}\begin{aligned}
u_1(x,y,z)&=-az+B(x,y)+Q(y),\\
u_2(x,y,z)&=-bz+D(x,y)+S(x),\\
u_3(x,y,z)&=H(z)+c+ax+by,\end{aligned}
\end{equation*} 
where $\mathbb E(-az,-bz,c+ax+by)=0$. 
%Concerning the minimizer $\u$, we conclude that up to adding an infinitesimal rigid displacement,  $u_1$ and $u_2$ do not depend on $z$ while $u_3$ does not depend on $x,y$.  
This shows that if $\u\in H^1(\om,\R^3)$ minimizes $\mathcal G$, then up to adding an infinitesimal rigid displacement $u_1,u_2$  depend only on $x,y$ and $u_3$  depends only on $z$.

Let now $\u^*=(u_1^*,u_2^*,u_3^*)\in H^1(\om,\mathbb R)^3$ be a minimizer of $\mathcal G$ such that $u^*_1,u^*_2$ do not depend on $z$ and $u_3^*$ does not depend on $x,y$, so that by defining $\hat \u:=(-u^*_1,-u^*_2,u^*_3)$ we get $|\mathbb E(\u^*)|^2\equiv |\mathbb E(\hat \u)|^2$.  Let $\mathbf R^*\in{\mathrm{argmax}}_{{\mathcal S}_{\mathcal L}^0} \mathcal L(\mathbf R\u^*)$.   By letting $\hat {\mathbf R}:=\mathrm{diag}(-1,-1,1)\,\mathbf R^*$, we have $\hat{\mathbf R}\in\mathcal S_{\mathcal L}^0$ and
we get 
\[
\min_{H^1(\om,\R^3)}\mathcal G=\int_\om|\mathbb E(\u^*)|^2\,d\x-\mathcal L(\mathbf R^*\u^*)=\int_\om|\mathbb E(\hat\u)|^2\,d\x-\mathcal L(\hat{\mathbf R}\hat\u),
\] 
thus showing that $\hat\u$ is also a minimizer of $\mathcal G$. However, $\hat \u-\u^*$ is not an infinitesimal rigid displacement. Indeed, assume by contradiction  that $\mathbb E(\u^*-\hat\u)\equiv 0$. Then $u_{1,x}^*\equiv u_{2,y}^*\equiv u_{1,y}^*+u_{2,x}^*\equiv 0$,
implying the existence of real constants $\bar a,\bar b,\bar c$ such that $u_1^*(x,y)=\bar a+\bar cy$ and $u_2^*(x,y)=\bar b-\bar cx$. 
Therefore, $(0,0,u_3^*)$ differs from $\u^*$ by an infinitesimal rigid displacements, and since $\mathcal G$ is invariant under the addition of infinitesimal rigid displacements, we obtain the minimality of $(0,0,u_3^*)$ for $\mathcal G$. But then the form \eqref{S0Lcontrex} of $\mathcal S_{\mathcal L}^0$ and the fact that $u_3^*$ depends only on $z$ directly imply
\[
\min_{H^1(\om,\mathbb R^3)}\mathcal G=\mathcal G(0,0,u_3^*)=4\int_\om(u^*_{3,z})^2\,d\x
\]
so that $u_{3,z}^*$ needs to be identically zero and we deduce that the trivial displacement field minimizes $\mathcal G$, so that
\[
0=\mathcal G(\mathbf 0)=\min_{H^1(\om,\R^3)}\mathcal G\le \min_{H^1(\om,\R^3)}\tilde{\mathcal G},
\] 
where $\tilde{\mathcal G}$ is defined by \eqref{auxfunct}. This contradicts Lemma \ref{lemma21} and concludes the proof.
\end{proof}
%%%%%
In the next statement, for every
 $\theta\in [-\pi,\pi]$ and for  $\mathbf R_\theta $ as in \ref{S0Lcontrex}, we use the notation %$\mathcal G_\theta:=\mathcal G_{\mathbf R_\theta}$  and define 
%$\mathcal L_\theta(\u):= \mathcal L(\mathbf R_\theta\u),$
\begin{equation*}
\displaystyle {\mathcal G}_\theta(\u)=\left\{\begin{array}{ll}\displaystyle 4\int_\om |\mathbb E(\u)|^2\,d\x- \mathcal L_{\mathbf R_\theta}(\u)\quad &\hbox{if} \ \u\in H^1(\om,\mathbb R^3)\\
&\\
 \ \!\!+\infty\quad &\hbox{otherwise in} \ W^{1,p}(\om,\mathbb R^3),
\end{array}\right.
\end{equation*}
where $\mathcal L_{\mathbf R}$ is defined by \eqref{elleerre}.
With this notation we clearly have $\mathcal G_0\equiv\mathcal E$ and $\mathcal G_{-\pi/2}\equiv\tilde{\mathcal G}$, where $\tilde{\mathcal G}$ is defined by \eqref{auxfunct}.
%and similarly $\mathcal G_\theta^I$.
%Then with the same proof of Proposition \ref{pro52} we get
%\begin{proposition}\label{pro53} Assume \eqref{cyl2}, \eqref{ef2}, \eqref{Venant3}, \eqref{effe12}, \eqref{effe22} and  $\int_0^1z\psi(z)\,dz>0$. If $\u\in\mathrm{argmin}_{H^1(\om,\R^3)}\mathcal G_\theta$, then $\u$ 
%is of the form $\u=\w^0+\w$, where $\mathbb E(\w^0)\equiv0$ and $\w=(w_1,w_2,w_3)$ is such that $w_1,w_2$ do not depend on $z$ and $w_3$ does not depend on $x,y$. 
%The same properties hold for functional $\mathcal G_\theta^I$.
% \end{proposition}
 \begin{proposition} Under the same assumptions of {\rm Proposition \ref{pro52}}, let $\u_0\in \argmin_{W^{1,p}(\om,\R^3)} \mathcal G_0$ and let $\u_{-\pi/2}\in\argmin_{W^{1,p}(\om,\R^3)} \mathcal G_{-\pi/2}$. Then,
 % such that $\u_\theta:=\u_0\cos\theta+\u_{\pi/2}\sin\theta\in\argmin \mathcal G_{\theta}$. Moreover
 \begin{equation}\label{z11}
\min_{W^{1,p}(\om,\R^3)}\mathcal G_{\theta}=\cos^2\theta\,\mathcal G_0(\u_0)+\sin^2\theta\,\mathcal G_{-\pi/2}(\u_{-\pi/2})
\end{equation}
and
\begin{equation}\label{z22}
\displaystyle\min_{W^{1,p}(\om,\R^3)}\mathcal G=\min_{\theta\in[-\pi,\pi]}\;\;\min_{\u\in W^{1,p}(\om,\R^3)}\mathcal G_{\theta}(\u)=\mathcal G_{-\pi/2}(\u_{-\pi/2})<\mathcal G_0(\u_0)=\min_{W^{1,p}(\om,\R^3)}\mathcal E.
\end{equation}

 \end{proposition}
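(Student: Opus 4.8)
The plan is to realize $\mathcal G$ as the pointwise minimum of the one‑parameter family $\{\mathcal G_\theta\}_{\theta\in[-\pi,\pi]}$ and to exploit the product structure of $\Omega$ and of $\mathbf f$. Since $\int_0^1 z\psi(z)\,dz>0$, the rotation kernel is $\mathcal S^0_{\mathcal L}=\{\mathbf R_\theta:\theta\in[-\pi,\pi]\}$ by \eqref{S0Lcontrex}; moreover, under \eqref{Venant3}, and using $\mathcal L(\u)+\max_{\mathbf R\in\mathcal S^0_{\mathcal L}}\mathcal L((\mathbf R-\mathbf I)\u)=\max_{\mathbf R\in\mathcal S^0_{\mathcal L}}\mathcal L(\mathbf R\u)$, the definition of $\mathcal G$ becomes, for $\u\in H^1(\om,\R^3)$,
\[
\mathcal G(\u)=4\int_\om|\mathbb E(\u)|^2\,d\x-\max_{\theta\in[-\pi,\pi]}\mathcal L(\mathbf R_\theta\u)=\min_{\theta\in[-\pi,\pi]}\mathcal G_\theta(\u),
\]
while $\mathcal G(\u)=+\infty$ otherwise; hence $\min_{W^{1,p}(\om,\R^3)}\mathcal G=\min_{\theta\in[-\pi,\pi]}\min_{W^{1,p}(\om,\R^3)}\mathcal G_\theta$, all infima being attained (the one in $\theta$ because $[-\pi,\pi]$ is compact and $\theta\mapsto\mathcal L(\mathbf R_\theta\u)$ is continuous). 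For each fixed $\theta$ we have $\mathbf R_\theta\in\mathcal S^0_{\mathcal L}$, so by Theorem \ref{rotheorem} the functional $\mathcal L_{\mathbf R_\theta}$ satisfies \eqref{globalequi}--\eqref{comp}; in particular $\mathcal G_\theta$ is invariant under the addition of infinitesimal rigid displacements and, by Korn's inequality, attains its minimum over $W^{1,p}(\om,\R^3)$.

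Next I would show that each minimization of $\mathcal G_\theta$ may be restricted to the displacement fields of the form $\v(x,y,z)=(\u(x,y),w(z))$ with $\u\in H^1(B,\R^2)$, $w\in H^1(0,1)$. Since the horizontal part of $\mathbf R_\theta^T\mathbf f$ is $\cos\theta\,\nabla\varphi-\sin\theta\,\nabla^\perp\varphi$ (with $\nabla^\perp\varphi:=(\varphi_y,-\varphi_x)$) and depends only on $(x,y)$, while the vertical part $\psi$ depends only on $z$, the value $\mathcal L(\mathbf R_\theta\v)$ is unchanged if $\v$ is replaced by the averaged field $\bar\v$ built exactly as in the proof of Proposition \ref{pro52}; combined with the Jensen inequalities there this yields $\mathcal G_\theta(\bar\v)\le\mathcal G_\theta(\v)$, so the minimum of $\mathcal G_\theta$ is attained at some such $\v$. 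On such fields $|\mathbb E(\v)|^2=|\widetilde{\mathbb E}(\u)|^2+w'(z)^2$ (the mixed entries of $\mathbb E(\v)$ vanish), so $\mathcal G_\theta$ decouples,
\[
\mathcal G_\theta(\v)=H_\theta(\u)+V(w),\qquad V(w):=4\pi\int_0^1 w'^{\,2}-\pi\int_0^1 w\psi,
\]
with $V$ independent of $\theta$ and $H_\theta(\u):=4\int_B|\widetilde{\mathbb E}(\u)|^2-\int_B(\cos\theta\,\nabla\varphi-\sin\theta\,\nabla^\perp\varphi)\cdot\u$. Thanks to \eqref{effe12} (notably $\phi(1)=0$ and $\int_0^1 r^2\phi'(r)\,dr=0$) and the radiality of $\varphi$, the two linear functionals $\u\mapsto\int_B\nabla\varphi\cdot\u$ and $\u\mapsto\int_B\nabla^\perp\varphi\cdot\u$ vanish on planar infinitesimal rigid displacements, so $V$ and the functionals defining $H_0$ and $H_{-\pi/2}$ all admit minimizers.

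Let $\mathbf a,\mathbf b\in H^1(B,\R^2)$ minimize $\u\mapsto4\int_B|\widetilde{\mathbb E}(\u)|^2-\int_B\nabla\varphi\cdot\u$ and $\u\mapsto4\int_B|\widetilde{\mathbb E}(\u)|^2-\int_B\nabla^\perp\varphi\cdot\u$, respectively. By linearity of the Euler--Lagrange system the field $\cos\theta\,\mathbf a-\sin\theta\,\mathbf b$ minimizes $H_\theta$; and since for a functional $\tfrac12 a(\cdot,\cdot)-\ell$ the minimum equals $-\tfrac12\,\ell$ evaluated at the minimizer, the weak formulations (which give $\int_B\nabla\varphi\cdot\mathbf b=\int_B\nabla^\perp\varphi\cdot\mathbf a=8\int_B\widetilde{\mathbb E}(\mathbf a):\widetilde{\mathbb E}(\mathbf b)$) lead to
\[
\min H_\theta=\cos^2\theta\,\min H_0+\sin^2\theta\,\min H_{-\pi/2}+8\cos\theta\sin\theta\int_B\widetilde{\mathbb E}(\mathbf a):\widetilde{\mathbb E}(\mathbf b).
\]
The crux of the argument --- and the step I expect to be the main obstacle --- is that the cross term vanishes, i.e.\ $\int_B\widetilde{\mathbb E}(\mathbf a):\widetilde{\mathbb E}(\mathbf b)=0$. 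I would prove this by a parity argument: under the reflection $(x,y)\mapsto(x,-y)$, acting equivariantly on vector fields and on symmetric tensor fields, the load $\nabla\varphi$ is invariant while $\nabla^\perp\varphi$ changes sign (both because $\varphi$ is radial) and the energy $\int_B|\widetilde{\mathbb E}(\cdot)|^2$ is invariant; hence $\mathbf a$ may be chosen even and $\mathbf b$ odd, so $\widetilde{\mathbb E}(\mathbf a)$ and $\widetilde{\mathbb E}(\mathbf b)$ have opposite parity and the integral of their pointwise scalar product vanishes. (Alternatively, one can use, as in the proof of Theorem \ref{mainth3}, that $\mathbf a=\nabla\Phi$ with $\Phi$ radial and that $\mathbf b$ is a swirl field of the form $g(\sqrt{x^2+y^2})\,(-y,x)$, and check $\widetilde{\mathbb E}(\mathbf a):\widetilde{\mathbb E}(\mathbf b)\equiv0$ by a direct computation in polar coordinates.)

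Adding back the $\theta$‑independent term $\min V$ and using $\cos^2\theta+\sin^2\theta=1$ gives
\[
\min_{W^{1,p}(\om,\R^3)}\mathcal G_\theta=\cos^2\theta\min_{W^{1,p}(\om,\R^3)}\mathcal G_0+\sin^2\theta\min_{W^{1,p}(\om,\R^3)}\mathcal G_{-\pi/2},
\]
which is \eqref{z11}, since $\mathcal G_0\equiv\mathcal E$ and $\mathcal G_{-\pi/2}\equiv\tilde{\mathcal G}$ give $\mathcal G_0(\u_0)=\min\mathcal G_0$ and $\mathcal G_{-\pi/2}(\u_{-\pi/2})=\min\mathcal G_{-\pi/2}$. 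Finally, as $\cos^2\theta$ sweeps $[0,1]$ when $\theta$ ranges over $[-\pi,\pi]$,
\[
\min_{W^{1,p}(\om,\R^3)}\mathcal G=\min_{\theta\in[-\pi,\pi]}\min_{W^{1,p}(\om,\R^3)}\mathcal G_\theta=\min\bigl\{\mathcal G_0(\u_0),\,\mathcal G_{-\pi/2}(\u_{-\pi/2})\bigr\},
\]
and Theorem \ref{mainth3}, together with its proof (which gives $\min\mathcal G\le\min\tilde{\mathcal G}=\mathcal G_{-\pi/2}(\u_{-\pi/2})<\min\mathcal E=\mathcal G_0(\u_0)$), forces this minimum to be $\mathcal G_{-\pi/2}(\u_{-\pi/2})$, which is \eqref{z22}.
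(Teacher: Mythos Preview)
Your proof is correct and follows the same overall architecture as the paper's: reduction of $\mathcal G$ to $\min_\theta\mathcal G_\theta$ via \eqref{S0Lcontrex}, averaging in the spirit of Proposition \ref{pro52} to decouple each $\mathcal G_\theta$ into a planar part $H_\theta$ plus the $\theta$-independent vertical part $V$, linearity of the Euler--Lagrange equations to obtain the minimizer of $H_\theta$ as $\cos\theta\,\mathbf a-\sin\theta\,\mathbf b$, and identification of the key step as the vanishing of the cross term $\int_B\widetilde{\mathbb E}(\mathbf a):\widetilde{\mathbb E}(\mathbf b)$.

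The only genuine difference lies in how that cross term is killed. The paper proceeds by writing down the explicit minimizers: $\u_0=(r^{-1}\eta_*(r)\,x,\,r^{-1}\eta_*(r)\,y,\,\Psi(z))$ and $\u_{-\pi/2}=(2r^{-1}\eta_*(r)\,y,\,-2r^{-1}\eta_*(r)\,x,\,\Psi(z))$, with $\eta_*$ given by \eqref{etastar}, and then states that the orthogonality follows from radiality and the specific form of these fields. Your parity argument under the reflection $(x,y)\mapsto(x,-y)$ is a clean alternative that avoids the explicit ODE solution entirely: choosing $\mathbf a$ even and $\mathbf b$ odd under the equivariant action forces each entry of $\widetilde{\mathbb E}(\mathbf a)$ to have the opposite parity in $y$ to the corresponding entry of $\widetilde{\mathbb E}(\mathbf b)$, so their pointwise scalar product is odd and its integral over $B$ vanishes. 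This is more robust (it would survive replacing $|\widetilde{\mathbb E}|^2$ by any isotropic quadratic form) and conceptually tidier; the paper's route, on the other hand, has the advantage of exhibiting the actual minimizers $\u_\theta$ of $\mathcal G_\theta$ and of making the final identification of the optimal rotation $\mathbf R_{-\pi/2}$ fully explicit.
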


%However,
%we notice that
%\begin{equation}\label{rs0}
%\int_\om\mathbf f\cdot(\mathbf R-\mathbf I)\u^*\,d\x=0\qquad\forall \ \mathbf R\in\mathcal S_{\mathcal L}^0.
%\end{equation}
%In fact, in view of \eqref{S0Lcontrex}, if $\mathbf R\in\mathcal S_{\mathcal L}^0$
%there is  $\theta \in[-\pi,\pi]$ such that
%\[\begin{aligned}
%&\int_\om\mathbf f\cdot(\mathbf R-\mathbf I)\u^*\,d\x\\&= \int_B\mathbf f(x,y)\cdot((\cos\theta-1)(\bar a+\bar cy)+\sin\theta(\bar b-\bar c x),-\sin\theta(\bar a+\bar cy)+(\cos\theta-1)(\bar b-\bar cx))\,dx\,dy,\end{aligned}
%\]
%where the right hand side  is easily seen to vanish for every choice of the constants $\theta,\bar a,\bar b,\bar c$, as a consequence of \eqref{effe12}. But \eqref{rs0} implies $$\min_{H^1(\om,\R^3)}\mathcal G=\mathcal G(\u^*)=\mathcal E(\u^*),$$
%thus contradicting Theorem \ref{mainth3}.
%

 \KKK

\begin{proof}
It is possible to check with a computation that the vector field $2r^{-1}\eta_*(r)(y,-x)$, where $r=\sqrt{x^2+y^2}$ and $\eta_*$ is defined by \eqref{etastar}, solves the problem
\begin{equation*}
\left\{\begin{array}{ll}-8\dv\widetilde{\mathbb{E}}(\tilde\u)=(\varphi_y,-\varphi_x)\qquad&\mbox{in $B$}\\
\widetilde{\mathbb E}( \tilde\u)\mathbf n=0\qquad&\mbox{on $\partial B$},
\end{array}\right.
\end{equation*}
where $\widetilde{\mathbb E}(\cdot)$ denotes the upper-left $2\times2$ submatrix of $\mathbb E(\cdot)$.
Moreover, thanks to the very same argument of the proof of Proposition \ref{pro52}, it is possible to find a minimizer  of functional $\mathcal G_{-\pi/2}$ in which the first two components do not depend on $z$ and the third component does not depend on $x,y$. Therefore, it is possible to find such a minimizer by decoupling the corresponding Euler-Lagrange equation for $\u=(u_1,u_2,u_3)$, i.e., 
\begin{equation*}
\left\{\begin{array}{ll}-8\dv{\mathbb{E}}(\u)=\mathbf R_{\pi/2}(\varphi_x,\varphi_y,\psi)\qquad&\mbox{in $\Omega$}\\
{\mathbb E}( \u)\mathbf n=0\qquad&\mbox{on $\partial \Omega$},
\end{array}\right.
\end{equation*}
in the above problem on $B$ for $\tilde \u=(u_1,u_2)$, and in the ordinary differential equation $-8u_3''
=\psi$ in the interval $(0,1)$, complemented by the conditions $u'_3(0)=u_3'(1)=0$, that gets solved, recalling \eqref{effe22}, by the function
\[
\Psi(z)=-\frac18\int_0^z\int_0^s\psi(t)\,dt\,ds.
\]
Therefore, a minimizer  of $\mathcal G_{-\pi/2}$ over $W^{1,p}(\om,\mathbb R^3)$ is given by
$
%\u_{\pi/2}(x,y,z)=
(2r^{-1}\eta_*(r)\,y,-2r^{-1}\eta_*(r)\,x, \Psi(z)).
$
Similarly,
$
%\u_0(x,y,z)=
(r^{-1}\eta_*(r)\,x,r^{-1}\eta_*(r)\,y, \Psi(z))
$
is a minimizer of $\mathcal G_0\equiv\mathcal E$, recalling that $r^{-1}\eta_*(r)(x,y)$ solves \eqref{elasticproblem} as seen in the proof of Theorem \ref{mainth3}. Hence, given $\u_0\in \argmin_{W^{1,p}(\om,\R^3)} \mathcal G_0$ and  $\u_{-\pi/2}\in\argmin_{W^{1,p}(\om,\R^3)} \mathcal G_{\pi/2}$, we may assume w.l.o.g. that
$$
\u_{-\pi/2}=
(2r^{-1}\eta_*(r)\,y,-2r^{-1}\eta_*(r)\,x, \Psi(z)),\qquad
\u_0=
(r^{-1}\eta_*(r)\,x,r^{-1}\eta_*(r)\,y, \Psi(z)),
$$
 and more generally we let
\[
\u_\theta:=\cos\theta(r^{-1}\eta_*(r)\,x,r^{-1}\eta_*(r)\,y, 0)-\sin\theta(2r^{-1}\eta_*(r)\,y,-2r^{-1}\eta_*(r)\,x, 0)+(0,0,\Psi(z)),
\] 
so that $\u_\theta$ solves
\begin{equation*}
\left\{\begin{array}{ll}-8\dv{\mathbb{E}}(\u)=\mathbf R^T_{\theta}(\varphi_x,\varphi_y,\psi)\qquad&\mbox{in $\Omega$}\\
{\mathbb E}( \u)\mathbf n=0\qquad&\mbox{on $\partial \Omega$},
\end{array}\right.
\end{equation*}
for any $\theta\in[-\pi,\pi]$, so that $\u_\theta$ is indeed a minimizer of $\mathcal G_\theta$.
Taking advantage of radiality and of the form of $\u_\theta$, it is easy to check that
\[
\int_\om \mathbb E(\u_0):\mathbb E(\u_{-\pi/2})\,d\x=0
\]
and that \eqref{z11} holds. 
But as shown in the proof of Theorem \ref{mainth3} we have
\[
\mathcal G_0(\u_0)=\min_{W^{1,p}(\om,\R^3)}\mathcal G_0=\min_{W^{1,p}(\om,\R^3)}\mathcal E>\min_{W^{1,p}(\om,\R^3)}\tilde{\mathcal G}=
\min_{W^{1,p}(\om,\R^3)}{\mathcal G}_{-\pi/2}=\mathcal G_{-\pi/2}(\u_{-\pi/2}),
\]
so that 
\[
\min_{\theta\in[-\pi,\pi]}\mathcal G(\u_\theta)=\min_{\theta\in[-\pi,\pi]}\cos^2\theta\,\mathcal G_0(\u_0)+\sin^2\theta\,\mathcal G_{-\pi/2}(\u_{-\pi/2})=\mathcal G_{-\pi/2}(\u_{-\pi/2}).
\]
We conclude that the optimal rotation realizing the maximum in the definition of $\mathcal G(\u_{-\pi/2})$ is given by $\mathbf R_{-\pi/2}$ and that \eqref{z22} holds true.
\end{proof}

We eventually remark that in view of the latter propositions (and in the same assumptions) and in view of Theorem \ref{rotheorem}, by taking $\theta=\pm\pi/2$, there is no gap between the minimal value of functional $\mathcal G_{\mathbf R_\theta}$ and that of functional $\mathcal E_{\mathbf R_\theta}$.

\subsection*{Acknowledgements} 
The authors acknowledge support from the MIUR-PRIN  project  No 2017TEXA3H.
The authors are members of the
GNAMPA group of the Istituto Nazionale di Alta Matematica (INdAM).

\end{document}